\numberwithin{equation}{section}
\newcommand{\vp}{\varphi}
\newcommand{\defs}{\coloneqq}
\DeclareMathOperator{\tr}{\mathrm{Trace}}
\newcommand{\Rmnum}[1]{\expandafter\@slowromancap\romannumeral#1@}
\theoremstyle{plain}
\newtheorem{thm}{Theorem}[section]
\newtheorem{prop}{Proposition}[section]
\newtheorem{lem}{Lemma}[section]
\newtheorem{rmk}{Remark}[section]
\newcommand{\Real}{\mathbb R}
\newcommand{\mcw}{\mathcal{W}}
\newcommand{\mbe}{\mathbf e}
\newcommand{\mbx}{\mathbf x}
\newcommand{\mby}{\mathbf y}
\newcommand{\mbz}{\mathbf z}
\begin{document}
\title[Unsteady potential flows near a corner]{Local well-posedness of unsteady potential flows near a space corner of right angle} 
\author{Beixiang Fang}

\author{Wei Xiang}

\author{Feng Xiao}

\address{B.X. Fang: School of Mathematical Science, MOE-LSC, and SHL-MAC, Shanghai Jiao
Tong University, Shanghai 200240, China}

\email{\texttt{bxfang@sjtu.edu.cn}}

\address{W. Xiang: Department of Mathematics, City University of Hong Kong, Hong Kong, China}

\email{\texttt{weixiang@cityu.edu.hk}}

\address{F. Xiao: Key Laboratory of Computing and Stochastic Mathematics (Ministry of Education), School of Mathematics and Statistics, Hunan Normal University, Changsha, Hunan 410081, P. R. China
}

\email{\textbf{xf@hunnu.edu.cn}}
\keywords{Local well-posedness; Potential Flow Equations; Corner Singularity; Co-normal Boundary Condition; Quasi-linear Hyperbolic Equation}

\subjclass[2000]{35L65, 35B35, 35L72, 35L04}
\date{\today}

\begin{abstract}
	
	In this paper we are concerned with the local well-posedness of the unsteady potential flows near a space corner of right angle, which could be formulated as an initial-boundary value problem of a hyperbolic equation of second order in a cornered-space domain.
	The corner singularity is the key difficulty in establishing the local well-posedness of the problem.
	Moreover, the boundary conditions on both edges of the corner angle are of Neumann-type and fail to satisfy the linear stability condition, which makes it more difficult to establish \emph{a priori} estimates on the boundary terms in the analysis.
	In this paper, extension methods will be updated to deal with the corner singularity, and, based on a key observation that the boundary operators are co-normal, new techniques will be developed to control the boundary terms.

\end{abstract}

\maketitle
\tableofcontents{}

\section{Introduction}\label{sec:1}

In this paper we are concerned with the local existence of the unsteady inviscid compressible flows near a corner of right angle.
Physically, compressible flows near corners can be observed easily and continuously.
However, within the best extent of our knowledge, the mathematical analysis for such phenomena is far away from satisfied.
In this paper, we are going to study this problem and trying to establish a mathematical theory on the local well-posedness of the unsteady potential flow near a 2-D corner of right angle.

In this paper, the inviscid compressible flow is assumed to be isentropic and irrotational such that its motion can be governed by the following 2-D unsteady potential flow equations:
\begin{equation}\label{1.1}
\left\{
\begin{array}{ll}
\partial_t\rho+\mbox{div}(\rho\nabla\Phi)=0\qquad\qquad\qquad (\mbox{Conservation\ of\ mass})\\
\partial_t\Phi+\frac12|\nabla\Phi|^2+\imath(\rho)=B_0\ \quad\qquad (\mbox{Bernoulli's\ law})
\end{array}
\right.
\end{equation}
where $\nabla\defs(\partial_{x_1},\partial_{x_2})^\top$ is the gradient operator with respect to the spatial variable $\mathbf{x}\defs(x_1,x_2)$ and $t$ is the time variable. 
Moreover, $\rho$ is the density, and $\Phi$ is the velocity potential, \emph{i.e.}, the gradient $\nabla\Phi$ is the fluid velocity. 
The fluid is assumed to be a polytropic gas such that the enthalpy $\imath (\rho)=\displaystyle\frac{\rho^{\gamma-1}-1}{\gamma-1}$, where $\gamma>1$ is the adiabatic exponent.
Finally, $c=\sqrt{\rho^{\gamma-1}}$ is the sonic speed, and $B_0$ is the Bernoulli's constant.

The Bernoulli's law, the second equation of \eqref{1.1}, implies that the density $\rho$ can be expressed as a function with respect to $(\partial_{t}\Phi,\nabla\Phi)$:
\begin{equation}\label{eq:density}
\rho = \varrho(\partial_t\Phi, \nabla\Phi;\gamma,B_0)\coloneqq (1 + (\gamma-1)(B_{0} - \partial_t\Phi - \frac{1}{2}|\nabla\Phi|^2))^{\frac{1}{\gamma-1}}.
\end{equation}
Substituting \eqref{eq:density} into the first equation of \eqref{1.1}, we deduce that the velocity potential function $\Phi$ satisfies the following equation of second order:
\begin{equation}\label{eq:potential-equations}
\partial_{tt}\Phi+2\sum_{i=1}^2\partial_{x_i}\Phi\partial_{tx_i}\Phi-\sum_{i,j=1}^2(\delta_{ij}c^2-\partial_{x_i}\Phi\partial_{x_i})\partial_{x_ix_j}\Phi=0,
\end{equation}
where $\delta_{ij}$ is the Kronecker delta.
Let $a_{00}\defs 1$ and
\begin{align}
	&a_{ij}=a_{ji}\defs-c^2\delta_{ij}+\partial_{x_i}\Phi\partial_{x_j}\Phi, \quad&&\mbox{for\ } i,j\ge 1,\\
	&a_{0j}=a_{j0}\defs\partial_{x_j}\Phi,\quad&&\mbox{for\ } j=1,2.
\end{align}
Then equation \eqref{eq:potential-equations} can be denoted by
\begin{align}
	\sum_{i,j=0}^2a_{ij}\partial_{x_ix_j}\Phi=0.\label{1.6}
\end{align}

It is well-known that, as $ \rho \not=0 $, namely, vacuum does not appear in the flow, the equation \eqref{eq:potential-equations} ( or \eqref{1.6} ) is of hyperbolic type.

\begin{figure}[H]
	\centering
	\includegraphics[scale=0.7]{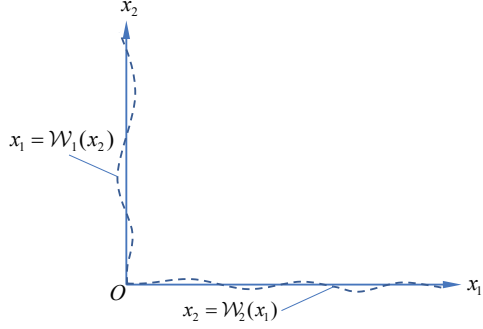}
	\caption[Cornered-space domain with slightly curved boundaries]{A cornered-space domain with slightly curved boundaries.}
	\label{fig:perturbation}
\end{figure}
Let $\Gamma_{w_1}$ and $\Gamma_{w_2}$ be two curves defined as
\[
\begin{split}
	\Gamma_{w_1} &=\{(x_1,x_2)\in\mathbb{R}^2;{\ }x_1=\mcw_1(x_2),{ }x_2>0\},\\
	\Gamma_{w_2} &=\{(x_1,x_2)\in\mathbb{R}^2;{\ }x_1>0,{\ }x_2=\mcw_2(x_1)\}.
\end{split}
\]
The following two assumptions will be imposed on these two curves: 
\begin{enumerate}
	\item[(\textbf{A1})] $\Gamma_{w_1}\cap\Gamma_{w_2}=\{(0,0)\}$.
	
	\item[(\textbf{A2})] $\mcw_i^{\prime}(0)=\mcw_i^{''}(0)=\mcw_i^{'''}(0)=0$ for $i=1,2$, here and after the superscripts $^\prime$, $^{\prime\prime}$ and $^{\prime\prime\prime}$ stand for the derivative of corresponding variable of the first, second and the third order.
\end{enumerate}
Obviously, $\Gamma_{w_1}$ and $\Gamma_{w_2}$ are perpendicular to each other at the origin $ (0,0) $.
Denote the cornered-space domain (see Figure \ref{fig:perturbation}) bounded by $\Gamma_{w_1}$ and $\Gamma_{w_2}$ by $\mathcal{D}$, i.e.,
\[\mathcal{D}\defs\{\mathbf{x}\in\Real^{2}:\ x_1>\mcw_1(x_2){\ }\mbox{and}{\ }x_2>\mcw_2(x_1)\}.\]

The fluid is confined in $\mathcal{D}$, and the velocity potential function $\Phi$ satisfies the following slip boundary conditions on the boundaries $\Gamma_{w_1}$ and $\Gamma_{w_2}$:
\begin{align}
 &\partial_{x_1}\Phi-\mcw^\prime_1(x_2)\partial_{x_2}\Phi=0, &\mbox{\ on\ } &\Gamma_{w_1},\label{vertical slip boundary condition}\\
 &\partial_{x_2}\Phi-\mcw^\prime_2(x_1)\partial_{x_2}\Phi=0, &\mbox{\ on\ } &\Gamma_{w_2}\label{horizontal slip boundary condition}.
\end{align}

When $t=0$, the state of the fluid is given such that the velocity potential function $\Phi$ is equipped with the initial conditions:
\begin{equation}\label{initial conditions}
    \Phi(0,\mbx)=\Phi_0(\mbx)\quad \mbox{and}\quad  \partial_t\Phi(0,\mbx)=\Phi_1(\mbx), \qquad  \mbx\in\mathcal{D}.
\end{equation}

Thus, to show the local existence of the potential flow in the cornered-space domain  $\mathcal{D}$, one needs to prove the local well-posedness of the initial-boundary value problem \eqref{1.6}, \eqref{vertical slip boundary condition}, \eqref{horizontal slip boundary condition}, and \eqref{initial conditions}.
Since the equation \eqref{1.6} is of hyperbolic type which enjoys the property of finite speed of propagation, one may further assume that $\Gamma_{w_1}$ and $\Gamma_{w_2}$ are small perturbation of straight lines, and the initial conditions \eqref{initial conditions} describe small perturbation of the state of the flow near the corner $ (0,0) $.

Since  $\Phi$ satisfies both \eqref{vertical slip boundary condition} and \eqref{horizontal slip boundary condition} at the corner point $ (0,0) $, one immediately obtains that $ \nabla\Phi(t,0,0) = (0,0) $ and the flow is static.
Let the density $ \rho_0 $ at the corner point $ (0,0) $ be
\[
	\rho_0\defs \varrho(0,0,0;\gamma,B_0)=(1+(\gamma-1)B_0)^{\frac{1}{\gamma-1}}.
\]
Moreover, let
\[
\begin{split}
\bar\Gamma_{w_1} &\defs\{(x_1,x_2)\in\mathbb{R}^2;{\ }x_1=\overline{\mcw}_1(x_2)\equiv 0,{\ }x_2>0\},\\
\bar\Gamma_{w_2} &\defs\{(x_1,x_2)\in\mathbb{R}^2;{\ }x_1>0,{\ }x_2=\overline{\mcw}_2(x_1)\equiv 0,\},
\end{split}
\]
%
and the domain be bounded by them (see Figure \ref{fig:background state})
\[\overline{\mathcal{D}}\defs\{\mathbf{x}\in\Real^{2}:\ x_1>0{\ }\mbox{and}{\ }x_2>0\}.\]
It is obvious that 
$$(\overline{\Phi}(t,\mbx),\bar{\rho}(t,\mbx))\defs(0,\rho_0)$$ 
is a steady solution to the unsteady potential flow equations \eqref{1.1}. It can be further verified that $\overline{\Phi}(t,\mbx)$ is a steady solution to equation \eqref{eq:potential-equations} in $ \overline{\mathcal{D}} $, satisfying the slip boundary conditions on $ \bar{\Gamma}_{w_1} $ and $ \bar{\Gamma}_{w_2} $. 

\begin{figure}[h]\label{background state}
\centering
{\includegraphics[width=0.3\columnwidth]{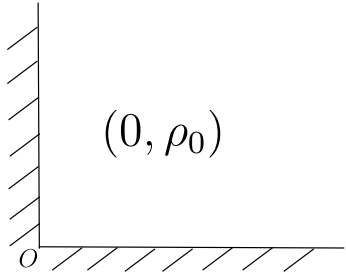}}\caption{A steady solution to  \eqref{1.1}}\label{fig:background state}
\end{figure}

Therefore, the initial-boundary value problem \eqref{1.6}, \eqref{vertical slip boundary condition}, \eqref{horizontal slip boundary condition}, and \eqref{initial conditions} for generic smooth initial and boundary data can be reduced to the stability problem for the steady solution $\overline{\Phi}(t,\mbx)$ under small perturbation of the boundary and the initial data.

\vskip 1em
\noindent
$\textbf{Problem 1:}$
Suppose $\Gamma_{w_1}$ and $\Gamma_{w_2}$ and the initial data $(\Phi_0(\mbx),\Phi_1(\mbx))$ satisfy the following conditions:
\begin{itemize}
	\item The boundaries $\Gamma_{w_1}$ and $\Gamma_{w_2}$ satisfy assumptions ($\textbf{A1}$) and ($\textbf{A2}$). Moreover, $\Gamma_{w_1}$ and $\Gamma_{w_2}$ are small perturbations of the straight boundaries $\overline{\Gamma}_{w_1}$ and $\overline{\Gamma}_{w_2}$, respectively such that $\mathcal{W}_1(x_2)$ and $\mathcal{W}_2(x_1)$, as well as their derivatives, are close to zero.
	\item The initial data are small perturbations of $\overline{\Phi}(t,\mbx)$, i.e., $(\Phi_0(\mathbf{x}),\Phi_1(\mathbf{x}))$ is close to $(0,0)$.
\end{itemize} 
Does there exist a unique local classical solution ${\Phi}(t,\mbx)$ to the equation $\eqref{eq:potential-equations}$ in the cornered-space domain $ \mathcal{D} $ with initial-boundary conditions \eqref{vertical slip boundary condition}-
\eqref{initial conditions}, which is still close to $\overline{\Phi}(t,\mbx)$
? 

\medskip
This paper is devoted to investigating $\textbf{Problem 1}$ and will give a positive answer by proving Theorem \ref{main theorem}, which is the main theorem of this paper. The key difficulty is the corner singularity on the boundary and one needs to analyse the behaviour of the solution near the corner point.
To the best of our knowledge, up to now, a general theory on well-posedness of initial-boundary value problems of hyperbolic systems on non-smooth domains is not available.
Nevertheless, there are progresses toward this issue.
In \cite{Osher,Osher1}, Osher gives ill-posed examples of initial-boundary value problems of hyperbolic equations on a non-smooth domain, which shows the complexity of such problems.
There are also well-posed results on domains with corners.
In particular, as the corner angle is sufficiently small, in \cite{GodinMAMS,Godin} Godin derives local well-posedness of smooth solutions for two dimensional Euler system in bounded domains with finite corner points.
As the corner is a right angle, under certain symmetry assumptions, Gazzola-Secchi obtains the well-posedness of Euler equations in rectangular cylinders in \cite{GS} and Yuan establishes the stability of normal shocks in 2-D flat nozzles for two dimensional unsteady Euler system in \cite{Yuan2012}. 
Both in \cite{GS} and \cite{Yuan2012}, the symmetry assumptions play an essential role such that extension techniques can be employed to reduced the problem near the corner into an initial-boundary value problem on a domain with smooth boundaries. 
By developing new techniques based on the extension method, it is established in \cite{FHXX} by Fang-Huang-Xiang-Xiao  and in \cite{FXX} by Fang-Xiang-Xiao the local dynamic stability of steady normal shock solutions for unsteady potential flows under small perturbation of the physical boundary without the symmetry assumptions in \cite{GS} and \cite{Yuan2012}. 

It turns out that the ideas and techniques developed in \cite{FHXX,FXX} help to deal with the difficulties brought by the corner singularity in  $\textbf{Problem 1}$.
While new difficulties arise because the boundary conditions on both edges of the corner angle do not satisfy the linear stability conditions, which hold on the shock front, one of the edges of the corner angle, in \cite{FHXX,FXX}.
This fact makes it difficult to establish \emph{a priori} estimates on the boundary trace of the highest order derivatives of the solution and the techniques in \cite{FHXX,FXX} and \cite{Majda1987} do not work.
New techniques will be developed in this paper to control the boundary terms, based on a key observation that the boundary operators are co-normal (see $(\romannumeral2)$ in lemma \ref{lem: 2.1}).

It is well-known that in general unsteady flows governed by quasilinear hyperbolic systems of conservation laws will formulate singularities in finite time and nonlinear waves such as shocks, rarefaction waves, contact discontinuities, etc. may occur.
Thanks to continuous efforts of many mathematicians, there have been systematic theory for one space dimensional cases, see \cite{Bressan,Dafermos,Smoller} and the references cited therein.
For multidimensional problems, important progresses have also been made in the past decades.
Dynamical stability of the elementary nonlinear waves have been established in, for instance, \cite{Alinhac1989,AlinhacCPDE,CouSecchi,CouSecchi2004,CouSecchi2009,Majda1983E,Majda1983S,Majda1987} by employing the well-established mathematical theory for initial-boundary value problems of hyperbolic systems on smooth domains.
There are also progresses on self-similar solutions for important physical phenomena such as shock reflections, supersonic flows onto a wedge, interaction between the elementary nonlinear waves, etc.
See, for instance, \cite{BCFMemoris,BCF,Chen2003Multidimensional,CFHX,CFeldman2010,CFeldman2018,CFeldmanX,Chen5,EL,LiZheng2009,LiZheng2010,Zheng_Book}.
See also \cite{FHXX,FXX,GS,Godin,GodinMAMS,Li-Witt-Yin2018,XuYin2020,Yuan2012} for the studies on multidimensional problems on non-smooth domains.

The remainder of this paper is organized as follows. In section \ref{sec:2}, the initial boundary value problem in the cornered-space domain $\mathcal{D}$ is reformulated to a new one with straight boundaries by introducing coordinate transformations. Under the transformations, certain coefficients vanish on the boundary of the space domain such that extension technique can be employed. Moreover, the boundary operators remain co-normal.
 Section \ref{sec:3} is devoted to the well-posedness of the linearized problem.  The extension techniques will be employed to establish the existence of the solution in $H^2$ spaces, similar as in \cite{FHXX,FXX}. 
In order to carry out the nonlinear iteration, $H^2$ regularity is not sufficient and the solution of the linearized problem should enjoy higher order regularity.
However, the regularity of the coefficients of the equation in the extended domain is not sufficient to establish higher order \emph{a priori} estimates, so they have to be established directly in the cornered-space domain. 
In section \ref{sec:4}, based on the observation that the boundary operators are co-normal, new techniques will be developed to establish the higher order estimates, in particular, the estimates of the highest order derivatives of the solution on the boundary. 
 In section \ref{sec:5}, a classical iteration scheme will be carried out which converges to the local solution of the reformulated nonlinear initial-boundary value problem in the cornered-space domain.

\section{Coordinate transformation and main result}\label{sec:2}
In this section, we will introduce two coordinate transformations $\mathbb{T}_1$ and $\mathbb{T}_2$, 
under which the boundaries $\Gamma_{w_1}$ and $\Gamma_{w_2}$ are straightened and the extension technique can be used to solve the initial boundary value problem in new coordinate system. 
First we introduce the following transformation to straighten $\Gamma_{w_2}$
\begin{equation}\label{nonlinear initial boundary value problem}
    \mathbb{T}_1:\left\{\!\!
\begin{array}{ll}
    y_0=t,\\
    y_1=-x_1-\int_0^{x_1}\mcw_2^{\prime 2}(\tau)\mathrm{d}\tau-(x_2-\mcw_2(x_1))\mcw'_2(x_1),\\
    y_2=x_2-\mcw_2(x_1).
\end{array}
\right.
\end{equation}
Define $\tilde{\Phi}(y_0,y_1,y_2):=\Phi(t,x_1,x_2)$.
Then one has
\begin{align}
    \partial_t\Phi(t,x_1,x_2)&=\partial_{y_0}\tilde{\Phi}(y_0,y_1,y_2),\\
     \partial_{x_1}\Phi(t,x_1,x_2)&=(-1-(x_2-\mcw_2(x_1))\mcw_2''(x_1))\partial_{y_1}\tilde{\Phi}(y_0,y_1,y_2)\nonumber\\
     &\qquad-\mcw_2'(x_1)\partial_{y_2}\tilde{\Phi}(y_0,y_1,y_2),\\
    \partial_{x_2}\Phi(t,x_1,x_2)&=-\mcw_2'(x_1)\partial_{y_1}\tilde{\Phi}(y_0,y_1,y_2)+\partial_{y_2}\tilde{\Phi}(y_0,y_1,y_2).
\end{align}
Let $(y_0,\mby)\defs(y_0,y_1,y_2)$ be the time-spatial variables in new coordinate system, then one has
$$
\mathbf{J}\defs\frac{\partial (y_0,\mby)}{\partial(t,\mbx)}=
\begin{pmatrix}
   1&0&0\\
   0&-1-(x_2-\mcw_2(x_1))\mcw_2''(x_1)&-\mcw_2'(x_1)\\
   0&-\mcw_2'(x_1)&1
\end{pmatrix}.
$$
When $\mcw_2(x_1)$ and its derivatives are sufficiently small, $\mathbb{T}_1$ is invertible. We assume  
\begin{equation}\label{inverse of T1}
\mathbb{T}_1^{-1}:\left\{
\begin{array}{ll}
t=y_0,\\
x_1=u(y_1,y_2),\\
x_2=y_2+\mcw_2(u(y_1,y_2)),
\end{array}
\right.
\end{equation}
where $u(y_1,y_2)$ is the expression of $x_1$ in $(y_0,\mby)$-coordinate determined by $\mathbb{T}_1$. From now on, for the shortness, we omit the dependence of varibles of $\mcw_1(x_2)$ and $\mcw_2(x_1)$ and always keep in mind that $\mcw_1$ is a function of $x_2$ and $\mcw_2$ is a function of $x_1$.
By direct calculation, one has
\begin{equation*}
\begin{aligned}
    \mathrm{D}^2\Phi&=
    \mathbf{J}^\top\mathrm{D}^2_{\mby}\tilde{\Phi}\mathbf{J}-\{((x_2-\mcw_2)\mcw_2'''-\mcw_2'\mcw_2'')\tilde{\Phi}_{y_1}+\mcw_2''\tilde{\Phi}_{y_2}\}\mbe_1^\top\mbe_1\\
    &\qquad\qquad\qquad-\mcw_2''\tilde{\Phi}_{y_1}\mbe_1^\top\mbe_2-\mcw_2''\tilde{\Phi}_{y_1}\mbe_2^\top\mbe_1.
\end{aligned}
\end{equation*}
where $\mbe_1=(0,1,0)\in \mathbb{R}^3$ and $\mbe_2=(0,0,1)\in \mathbb{R}^3$.
Let $\mathbf{A}\defs\left(a_{ij}\right)_{3\times 3}$, then one has 
\begin{equation*}
    \begin{aligned}
    \sum_{i,j=0}^2a_{ij}\partial_{x_ix_j}\Phi&=\tr(\mathbf{A}^\top\mathrm{D}^2\Phi)\\
    &=\tr(\mathbf{J}^\top \mathbf{A}\mathbf{J}\mathrm{D}^2_{\mby}\tilde{\Phi})-a_{11}\{((x_2-\mcw_2)\mcw_2'''-\mcw_2'\mcw_2'')\partial_{y_1}\tilde{\Phi}+\mcw_2''\partial_{y_1}\tilde{\Phi}\}\\
    &\qquad\qquad\qquad\qquad\quad-2a_{12}\mcw_2''\partial_{y_1}\tilde{\Phi}\\
    &=\sum_{i,j=0}^2\tilde{a}_{ij}\partial_{y_iy_j}\tilde{\Phi}-(a_{11}((x_2-\mcw_2)\mcw_2'''-\mcw_2'\mcw_2'')+2a_{12}\mcw_2'')\partial_{y_1}\tilde{\Phi}\nonumber\\
    &\qquad\qquad\qquad\qquad\quad-a_{11}\mcw_2''\partial_{y_2}\tilde{\Phi},
\end{aligned}
\end{equation*}
where $\tilde{a}_{ij}$ is the $(i,j)$-th entry of $\mathbf{J}^\top \mathbf{A}\mathbf{J}$ such that
\[
\tilde{a}_{ij}=\sum_{k,\ell=0}^2\mathbf{J}_{ki}a_{k\ell}\mathbf{J}_{\ell j}.
\]
By direct calculation, we have
    \begin{align}
    &\tilde{a}_{00}=1,\\
    &\tilde{a}_{02}=\tilde{a}_{20}
    =-\mcw_2'\Phi_{x_1}+\Phi_{x_2},\label{2.4}\\
    &\tilde{a}_{22}=a_{11}\mcw_2'^2-2a_{21}\mcw_2'+a_{22},\\
    &\tilde{a}_{01}=\tilde{a}_{10}=-a_{01}(1+(x_2-\mcw_2)\mcw_2'')-a_{02}\mcw_2',\\
    &\tilde{a}_{12}=\tilde{a}_{21} =a_{11}(1+(x_2-\mcw_2)\mcw_2'')\mcw_2'-a_{12}(1+(x_2-\mcw_2)\mcw_2'')\nonumber\\
    &\qquad\qquad\quad{\ }-a_{22}\mcw_2'+a_{21}\mcw_2'^2,\label{2.5}\\
    &\tilde{a}_{11}=a_{11}(1+(x_2-\mcw_2)\mcw_2'')^2+a_{21}(-\mcw_2')(-1-(x_2-\mcw_2)\mcw_2'')\nonumber\\
    &\qquad{\ }\qquad\quad-\mcw_2'(-a_{12}(1+(x_2-\mcw_2)\mcw''_2)-\mcw_2'a_{22}).
\end{align}
 
It is clear that boundary $\Gamma_{w_2}$ becomes $\{y_2=0\}$ in the $(y_0,\mby)$-coordinate. Let 
\[F(y_1,y_2)\defs x_1(y_1,y_2)-\mcw_1(x_2(y_1,y_2)).
\] 
Then one has $F(0,0)=0$, and
in $(y_0,\mby)$-coordinate, the boundary $\Gamma_{w_1}$ can be expressed as $F(y_1,y_2)=0$. By direct calculation, one has 
\[\frac{\partial F}{\partial y_1}\neq 0,\]
 provided that the perturbations $\mcw_1$ and $\mcw_2$ are sufficiently small. Then by the implicit function theorem, $F(y_1,y_2)=0$ determines a function $y_1=\sigma(y_2)$ with $\sigma(0)=0$. 
Hence the boundary $\Gamma_{w_1}$ in the $\mby$-coordinate can be expressed as $y_1=\sigma(y_2)$.
\vspace{-0.5em}
 \begin{figure}[h]
	\centering
	\setlength{\abovecaptionskip}{-1.1cm}
	\includegraphics[width=1\textwidth,height=0.4\textwidth]{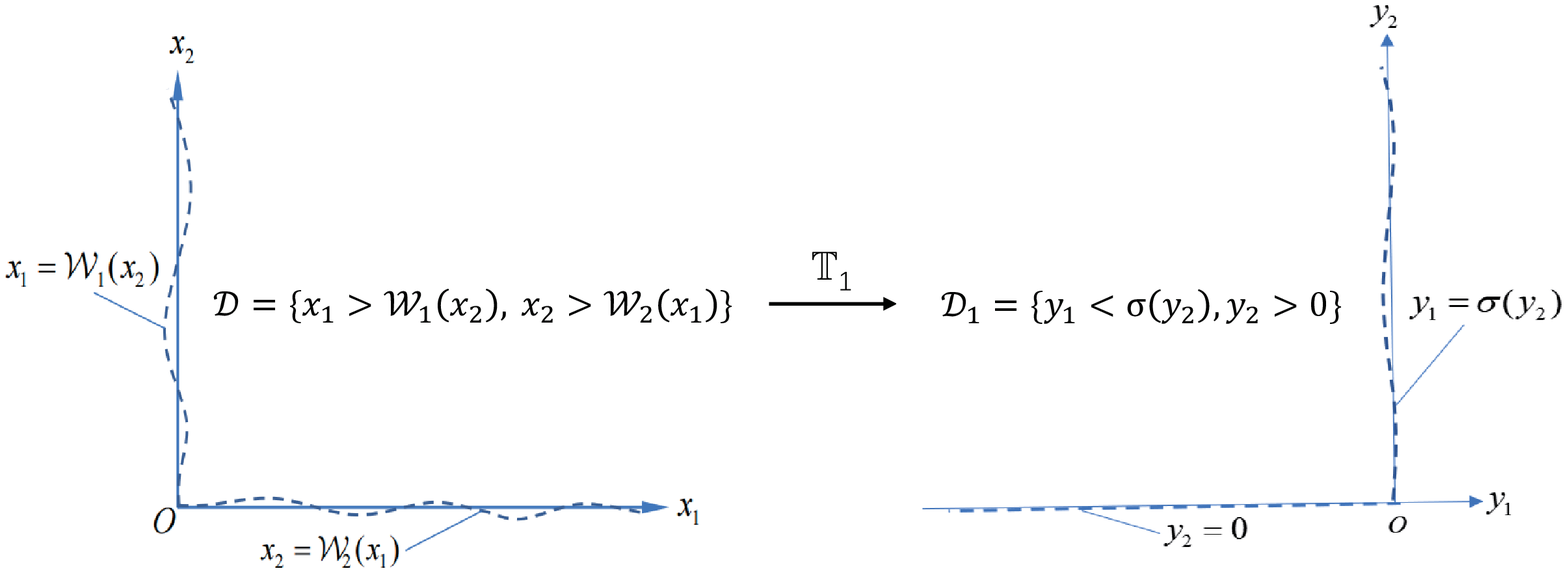}
	\caption{Coordinate transformation $\mathbb{T}_1$}
\end{figure} 
Therefore, under the transformation $\mathbb{T}_1$, the space domain $\mathcal{D}$ is converted to 
\[
\mathcal{D}_1=\{(y_1,y_2)\in\mathbb{R}^2;y_1<\sigma(y_2),y_2>0\},
\]
with boundary $\partial\mathcal{D}_1=\tilde{\Gamma}_1\cup\tilde{\Gamma}_2$, where
\begin{align}
	\tilde{\Gamma}_1&\defs\{(y_1,y_2)\in\mathbb{R}^2;y_1=\sigma(y_2),y_2>0\},\nonumber\\
	\tilde{\Gamma}_2&\defs\{(y_1,y_2)\in\mathbb{R}^2;y_1<0,y_2=0\}.\nonumber
\end{align}
By straightforward calculation, we find that the slip boundary conditions become
\begin{align}
	(-1-y_2\mcw_2''+\mcw_1'\mcw'_2)\partial_{y_1}\tilde{\Phi}-(\mcw'_1+\mcw_2')\partial_{y_2}\tilde{\Phi}&=0,\mbox{\ on\ }\tilde{\Gamma}_1,\label{vertical bdr condition in y coordinate}\\
	\partial_{y_2}\tilde{\Phi}&=0,\mbox{\ on\ }\tilde{\Gamma}_2.\label{horizontal bdry cond in y coordinate}
\end{align}
The initial conditions in the $(y_0,\mby)$-coordinates are
\begin{align}
    \tilde{\Phi}(0,y_1,y_2)&=\Phi_0(u,y_2+\mcw_2(u)),\\
    \partial_{y_0}\tilde{\Phi}(0,y_1,y_2)&=\Phi_1(u,y_2+\mcw_2(u)).
\end{align}
Next, we straighten $\Gamma_1$ by introducing the following coordinate transformation:
\begin{equation}
\mathbb{T}_2:\left\{\!\!
\begin{array}{ll}
    z_0=y_0,\\
    z_1=-y_1+\sigma(y_2),\\
    z_2=y_2.
\end{array}
\right.
\end{equation}
Then one has 
\begin{align}
x_1&=u(-z_1+\sigma(z_2),z_2),\nonumber\\
x_2&=\mcw_2(u(-z_1+\sigma(z_2),z_2))+z_2,\nonumber
\end{align}
where $u=u(y_1,y_2)$ is the expression of $x_1$ in $(y_0,\mby)$-coordinate determined by $\mathbb{T}_1$, as in \eqref{inverse of T1}.
Under transformation $\mathbb{T}_2$, the space domain $\mathcal{D}_1$ is mapped to
\[
\Omega\defs\{(z_1,z_2)\in\mathbb{R}^2;z_1>0,z_2>0\},
\]
\vspace{-1em}
\begin{figure}[h]
	\centering
	\includegraphics[width=0.9\textwidth,height=0.3\textwidth]{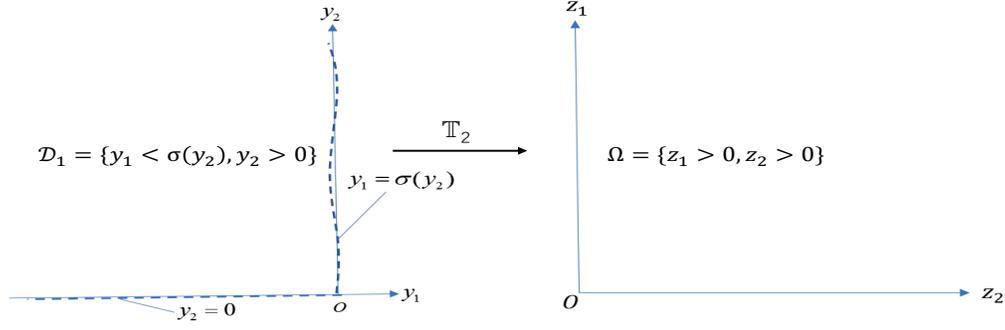}
	\caption{Coordinate transformation $\mathbb{T}_2$.}
\end{figure}
\vskip 1em
with boundary $\partial\Omega=\Gamma_1\cup\Gamma_2$,
where
\begin{align}
	\Gamma_1&\defs\{(z_1,z_2)\in \mathbb{R}^2;{\ }z_1=0,z_2>0\}, \nonumber\\
	\Gamma_2&\defs\{(z_1,z_2)\in \mathbb{R}^2;{\ }z_1>0,z_2=0\}.\nonumber
\end{align}

Define $\hat{\Phi}(z_0,z_1,z_2)\defs\tilde{\Phi}(y_0,y_1,y_2)$.
Then $\hat{\Phi}(z_0,z_1,z_2)$ satisfies 
$$\sum_{i,j=0}^2\alpha_{ij}\partial_{z_iz_j}\hat{\Phi}+\sum_{i=0}^2\alpha_i\partial_{z_i}\hat{\Phi}=0,$$
where
\begin{align}
    \alpha_{00}&=\tilde{a}_{00}=1,\label{alpha00}\\
    \alpha_{11}&=\tilde{a}_{11}-\sigma^\prime(y_2)\tilde{a}_{12}-\sigma^\prime(y_2)\tilde{a}_{21}+(\sigma^{\prime })^2(y_2)\tilde{a}_{22},\label{alpha11}\\
    \alpha_{22}&=\tilde{a}_{22},\\
    \alpha_{02}&=\alpha_{20}=\tilde{a}_{02},\label{2.21}\\
    \alpha_{21}&=\alpha_{12}=\tilde{a}_{22}\sigma'(y_2)-\tilde{a}_{21},\label{alpha12}\\
  \alpha_{01}&=\alpha_{10}=\tilde{a}_{02}\sigma'(y_2)-\tilde{a}_{10},\label{alpha10}
\end{align}
and
\begin{align}
    \alpha_0&=0,\\
    \alpha_1&=a_{11}((x_2-\mcw_2)\mcw_2'''-\mcw_2'\mcw_2'')+2a_{12}\mcw_2'',\\
    \alpha_2&=-a_{11}\mcw''_2.
\end{align}
Boundary conditions \eqref{vertical bdr condition in y coordinate} and \eqref{horizontal bdry cond in y coordinate} become
\begin{align}
	\bar{b}_1(z_1,z_2)\partial_{z_1}\hat{\Phi}+\bar{b}_2(z_1,z_2)\partial_{z_2}\hat{\Phi}&=0,\mbox{\ on\ }\Gamma_1,\label{vertical bdr cond in first quadrant}	\\
	\partial_{z_2}\hat{\Phi}&=0,\mbox{\ on\ }\Gamma_2,\label{bdr cond on flat boundary}
\end{align}
where
\begin{align}
\bar{b}_1(z_1,z_2)&=p(z_1,z_2)+(\mcw_2'(x_1)+\mcw_1'(x_2))\sigma'(z_2)+\mcw'_1(x_2)\mcw_2'(x_1),\label{2.32}\\
\bar{b}_2(z_1,z_2)&=\mcw_1'(x_2(z_1,z_2))+\mcw_2'(x_1(z_1,z_2)),\label{2.33}
\end{align}
with
$$p(z_1,z_2)=-1-z_2\mcw_2''(x_1(z_1,z_2)).$$
Initial conditions \eqref{initial conditions} become
\begin{align}
    \hat{\Phi}(0,z_1,z_2)&=\hat{\Phi}_0(z_1,z_2),\label{initial condition_1}\\
    \quad \partial_{z_0}\hat{\Phi}(0,z_1,z_2)&=\hat{\Phi}_1(z_1,z_2),\label{initial condition_2}
\end{align}
where
\begin{align}
&\hat{\Phi}_0(z_1,z_2)\defs\tilde{\Phi}_0(u(z_1+\sigma(z_2)),\mcw_2(u(z_1+\sigma(z_2))),\nonumber\\ &\hat{\Phi}_1(z_1,z_2)\defs\tilde{\Phi}_1(u(z_1+\sigma(z_2)),\mcw_2(u(z_1+\sigma(z_2))).\nonumber	
\end{align}

Let $\Gamma_0\defs\{0\}\times\Omega$ and $\Omega_T:=(0,T)\times \Omega $, where $T$ is any positive real number.
We summarize the mathematical problem as follows:
\begin{equation}\label{eq:nolinear in z-coordinate}
\begin{cases}
  \alpha_{ij}\partial_{z_iz_j}\hat{\Phi}+\alpha_i\partial_{z_i}\hat{\Phi}=0,&\mbox{\ in\ \ }\Omega_T,\\  
    G(\partial_{z_1}\hat{\Phi},\partial_{z_2}\hat{\Phi};\mcw'_1,\mcw'_2,\mcw''_2,\sigma')=0,&\mbox{\ on\ }\Gamma_1,\\
    \partial_{z_2}\hat{\Phi}=0,&\mbox{\ on\ }\Gamma_2,\\
    \hat{\Phi}(0,z_1,z_2)=\hat{\Phi}_0(z_1,z_2),\mbox{\ \ }\partial_{z_0}\hat{\Phi}(0,z_1,z_2)=\hat{\Phi}_1(z_1,z_2),&\mbox{\ on \ }\Gamma_0,
\end{cases}
\end{equation}
where
\[
G(\partial_{z_1}\hat{\Phi},\partial_{z_2}\hat{\Phi};\mcw'_1,\mcw'_2,\mcw''_2,\sigma')\defs \bar{b}_1(z_1,z_2)\partial_{z_1}\hat{\Phi}+\bar{b}_2(z_1,z_2)\partial_{z_2}\hat{\Phi}.
\]
The initial data $\hat{\Phi}_0$ and $\hat{\Phi}_1$ are defined in \eqref{initial condition_1} and \eqref{initial condition_2}, respectively and 
 at the background state, one has
 \begin{equation}\label{2.30}
 \left(\alpha_{ij}\right)_{3\times 3}=\begin{pmatrix}
 1&0&0\\
 0&-\rho_0^{\gamma-1}&0\\
 0&0&-\rho_0^{\gamma-1}
 \end{pmatrix}.
\end{equation}

\begin{lem}\label{lem: 2.1}
	The coefficients of the initial boundary value problem \eqref{eq:nolinear in z-coordinate} have the following properties:
	\begin{enumerate}[label=(\roman*)]
		\item $\alpha_{02}=\alpha_{20}$ and $\alpha_{12}=\alpha_{21}$ vanish on $\Gamma_2$, namely, for $i=0,1$,
		\[\alpha_{i2}(z_0,z_1,0)=\alpha_{2i}(z_0,z_1,0)=0.\]
		 
		\item The boundary operator of \eqref{eq:nolinear in z-coordinate} is ``co-normal'', i.e., the vector $(\alpha_{10},\alpha_{11},\alpha_{12})\in\mathbb{R}^3$ is parallel to $(0,\bar{b}_1,\bar{b}_2)\in \mathbb{R}^3$ on $\Gamma_1$, where $\bar{b}_1$ and $\bar{b}_2$ are the coefficients of the boundary conditions on $\Gamma_1$ in \eqref{eq:nolinear in z-coordinate}.
		\item It holds that 
		\begin{align}
		\bar{b}_{2}(0,0)=\partial_{z_2}\bar{b}_2(0,0)=\partial_{z_2z_2}\bar{b}_2(0,0).\label{lem2.1-2.33}	
		\end{align}
	\end{enumerate}
\end{lem}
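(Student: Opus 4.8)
The three assertions are essentially bookkeeping consequences of the explicit formulas derived in Section~\ref{sec:2}, combined with the assumptions (\textbf{A2}) and the background-state values \eqref{2.30}. I would organize the verification around the observation that every coefficient is a polynomial expression in $\mcw_1'(x_2)$, $\mcw_2'(x_1)$, $\mcw_2''(x_1)$, $\sigma'(y_2)$ (and products thereof) times combinations of the $a_{ij}$, so the claims reduce to tracking which of these quantities vanish on the relevant boundary.

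For $(\mathrm{i})$: on $\Gamma_2$ we have $z_2=y_2=0$. From \eqref{2.21} and \eqref{2.4}, $\alpha_{02}=\tilde a_{02}=-\mcw_2'\Phi_{x_1}+\Phi_{x_2}$; restricted to $\Gamma_2$ this is exactly the left-hand side of the slip boundary condition \eqref{horizontal slip boundary condition} on $\Gamma_{w_2}$, hence zero. Alternatively one invokes $\partial_{z_2}\hat\Phi=0$ directly from \eqref{bdr cond on flat boundary}. For $\alpha_{12}$, from \eqref{alpha12} one has $\alpha_{12}=\tilde a_{22}\sigma'(y_2)-\tilde a_{21}$, and from \eqref{2.5} the term $\tilde a_{21}$ is a sum of terms each carrying a factor of $\mcw_2'$ or $(1+(x_2-\mcw_2)\mcw_2'')\mcw_2'$ etc., so on $\Gamma_2$, after using $\partial_{z_2}\hat\Phi=0$ (equivalently $\Phi_{x_2}=\mcw_2'\Phi_{x_1}$ on $\Gamma_{w_2}$), one checks these combinations cancel. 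This is the step most likely to require a careful short computation rather than a one-line citation, so I would write it out.

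For $(\mathrm{ii})$: by definition $G=\bar b_1\partial_{z_1}\hat\Phi+\bar b_2\partial_{z_2}\hat\Phi$, so I must show $(\alpha_{10},\alpha_{11},\alpha_{12})\parallel(0,\bar b_1,\bar b_2)$ on $\Gamma_1$, i.e. $\alpha_{10}=0$ on $\Gamma_1$ and $\alpha_{11}\bar b_2=\alpha_{12}\bar b_1$ on $\Gamma_1$. The vanishing of $\alpha_{10}$ on $\Gamma_1$ should follow from \eqref{alpha10}, $\alpha_{10}=\tilde a_{02}\sigma'-\tilde a_{10}$, together with \eqref{2.4} and the formula for $\tilde a_{10}$: on $\Gamma_1$ (the image of $\Gamma_{w_1}$) one uses the slip condition \eqref{vertical slip boundary condition} plus the definition of $\sigma$. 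The proportionality $\alpha_{11}\bar b_2=\alpha_{12}\bar b_1$ is the genuine content; the clean way is to note that both $(\alpha_{11},\alpha_{12})$ and $(\bar b_1,\bar b_2)$ arise from applying the same $2\times2$ change-of-variables block (the $y\mapsto z$ Jacobian restricted to spatial variables) to, respectively, the $(y_1,y_2)$-principal symbol row $(\tilde a_{11},\tilde a_{12})$ and the $(y_1,y_2)$-boundary covector on $\tilde\Gamma_1$. So I would instead prove the co-normality one coordinate system back — that $(\tilde a_{10},\tilde a_{11},\tilde a_{12})$ is parallel to the boundary covector of \eqref{vertical bdr condition in y coordinate} on $\tilde\Gamma_1$ — and then transport it through $\mathbb T_2$, since $\mathbb T_2$ is affine in the spatial variables and preserves both the principal symbol (up to congruence) and the boundary covector (up to the same linear map). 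The algebraic heart is: the slip condition $\partial_\nu\Phi=0$ on a rigid wall for equation \eqref{1.6} is always co-normal because the conormal direction to the wall, paired against the symbol $\mathbf A$, reproduces (a multiple of) the wall's normal — this is the ``key observation'' flagged in the introduction, and in the original $\mbx$-coordinates it is immediate from $a_{ij}=-c^2\delta_{ij}+\Phi_{x_i}\Phi_{x_j}$ acting on a covector $n$ with $\nabla\Phi\cdot n=0$, giving $\mathbf A n=-c^2 n$. I expect this transport-through-two-changes-of-variables argument to be the main obstacle, mostly in keeping the formulas honest rather than in any conceptual difficulty.

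For $(\mathrm{iii})$: from \eqref{2.33}, $\bar b_2(z_1,z_2)=\mcw_1'(x_2(z_1,z_2))+\mcw_2'(x_1(z_1,z_2))$. At $(z_1,z_2)=(0,0)$ one has $x_1=x_2=0$ by (\textbf{A1}) and the construction of the transformations, and by (\textbf{A2}) $\mcw_i'(0)=\mcw_i''(0)=\mcw_i'''(0)=0$. Evaluating $\bar b_2$ and its $z_2$-derivatives at the corner via the chain rule, every term that appears is a derivative of $\mcw_1$ or $\mcw_2$ of order $\le 3$ evaluated at $0$ (the extra factors from $\partial_{z_2}x_1$, $\partial_{z_2}x_2$, $\sigma'$, etc. being smooth), hence all vanish; so $\bar b_2(0,0)=\partial_{z_2}\bar b_2(0,0)=\partial_{z_2z_2}\bar b_2(0,0)=0$, which is \eqref{lem2.1-2.33}. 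This part is routine once one records that $\partial_{z_2}\sigma(0)$, $\partial_{z_2}x_1(0,0)$, $\partial_{z_2}x_2(0,0)$ are finite — which follows from invertibility of $\mathbb T_1,\mathbb T_2$ — so I would state it briefly and leave the chain-rule expansion to the reader.
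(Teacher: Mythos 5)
Your overall plan is sound, and parts (i) and (iii) follow essentially the same route as the paper: direct inspection of the explicit formulas, the slip conditions, and assumption (\textbf{A2}). One concrete gap in (i): for $\alpha_{12}=\tilde a_{22}\sigma'(y_2)-\tilde a_{21}$ on $\Gamma_2$ you only discuss the cancellation inside $\tilde a_{21}$ (which does follow from the slip condition $\partial_{x_2}\Phi=\mcw_2'\partial_{x_1}\Phi$, as in the paper's identity \eqref{2.35}); but $\tilde a_{22}$ does \emph{not} vanish on $\Gamma_2$, so you must also show that $\sigma'(y_2)=\sigma'(0)=0$ there (recall $y_2=z_2=0$ along all of $\Gamma_2$). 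This is where (\textbf{A2}) enters again: differentiating $x_1(\sigma(y_2),y_2)=\mcw_1(x_2(\sigma(y_2),y_2))$ expresses $\sigma'(0)$ in terms of $\mcw_1'(0)$ and $\mcw_2'(0)$, hence it is zero. The paper spells this out; your ``track which factors vanish'' bookkeeping silently assumes it and your write-up must include it.

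For (ii) you take a genuinely different, and arguably cleaner, route. The paper verifies $\alpha_{10}|_{\Gamma_1}=0$ and $\alpha_{12}/\alpha_{11}|_{\Gamma_1}=\bar b_2/\bar b_1$ by direct manipulation of \eqref{alpha11}--\eqref{alpha10}, reducing $\tilde a_{11}-\sigma'\tilde a_{12}$ and $\sigma'\tilde a_{22}-\tilde a_{12}$ on $\tilde\Gamma_1$ to explicit multiples of $c^2$ after inserting the slip condition, and then taking the quotient. You instead prove the statement where it is a one-line computation — in the physical coordinates, $\nabla\Phi\cdot n=0$ on the wall gives $\mathbf{A}n=-c^2n$ for the wall conormal $n$ (with vanishing time component) — and transport it, using that the principal symbol transforms by congruence with the Jacobian $\mathbf{K}$ while the wall conormal and the boundary-condition covector transform consistently, so $\boldsymbol{\alpha}N\parallel \mathbf{K}n$ is preserved. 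This explains \emph{why} co-normality holds rather than merely checking it; the paper's computation, in exchange, yields the explicit ratio \eqref{bdr observation}. One correction to your transport step: $\mathbb T_2$ is not affine ($z_1=-y_1+\sigma(y_2)$ with $\sigma$ nonlinear), but this is immaterial, since the pointwise congruence of the principal symbol and the pointwise transformation of the covectors hold for any diffeomorphism, so your argument goes through unchanged.
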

\begin{proof}

 $(\romannumeral1)$:
By the formula of $\tilde{a}_{02}$ in \eqref{2.4} and the slip boundary conditon \eqref{horizontal slip boundary condition}, one has $\tilde{a}_{02}=\tilde{a}_{20}=0$ on $\Gamma_2$. Then \eqref{2.21} implies $\alpha_{02}=\alpha_{20}=0$ on $\Gamma_2$.
By \eqref{2.5} and the slip boundary condition
\eqref{horizontal slip boundary condition}, one has
\begin{align}
	 \tilde{a}_{12}|_{z_2=0}=\partial_{x_1}\Phi(\mcw_2'\partial_{x_1}\Phi-\partial_{x_2}\Phi)+\mcw_2'\partial_{x_2}\Phi(\mcw_2'\partial_{x_1}\Phi-\partial_{x_2}\Phi)=0.\label{2.35}
\end{align}
Differentiating with respect to $y_2$ on both sides of  $x_1(\sigma(y_2),y_2)=\mcw_1(x_2(\sigma(y_2),y_2))$, 
one deduces
$$
\sigma^\prime(y_2)=\frac{p(x_1,x_2)\mcw_1'(x_2(\sigma(y_2),y_2))-\mcw_2'(x_1(\sigma(y_2),y_2))}{1-\mcw_1'\mcw_2'},
$$
where $$p(x_1,x_2):=-1-(x_2-\mcw_2)\mcw_2''.
$$
By assumption $(\mathbf{A2})$ and the facts that $\sigma(0)=0$, $\mathbb{T}_1$ mapps the origin in $\mbx$-coordinate to the origin in $\mby$-coordinate, and $\mathbb{T}_1$ is invertible, one has
$$
\sigma^\prime(0)=\frac{-\mcw'_1(x_2(\sigma(0),0))-\mcw_2'(x_1(\sigma(0),0))}{1-\mcw'_1(x_2(\sigma(0),0))\mcw_2'(x_1(\sigma(0),0)}=0.
$$
Combining \eqref{alpha12}, \eqref{2.35}, and the fact that $\sigma^\prime(0)=0$, one deduces that $\alpha_{12}=\alpha_{21}=0$ on $\Gamma_2$ (equivalently on $\tilde{\Gamma}_2$).
By assumption $(\mathbf{A2})$, it is obvious that $\bar{b}_2=0$ on $\Gamma_2$.

$(\romannumeral2):$
By the formulas of $\tilde{a}_{10}$, $\tilde{a}_{20}$, and $\sigma^\prime(y_2)$, we obtain
\begin{align}
-\tilde{a}_{10}+\sigma^\prime\tilde{a}_{20}&=-p\partial_{x_1}\Phi+\partial_{x_2}\Phi\mcw_2'-\sigma^\prime\mcw_2'\partial_{x_1}\Phi+\sigma^\prime\partial_{x_2}\Phi\nonumber\\
&=-(p+\sigma^\prime\mcw_2')\partial_{x_1}\Phi+(\mcw_2'+\sigma^\prime)\partial_{x_2}\Phi.
\end{align}
But it is easy to verify that $$\frac{\mcw_2^\prime+\sigma^\prime}{-p-\sigma^\prime\mcw_2'}=-\frac{\mcw_1'}{1}.$$
By the slip boundary condition on $\Gamma_{w_1}$, one has
$
-\tilde{a}_{10}+\sigma^\prime\tilde{a}_{20}=0
$ on $\Gamma_1$, which implies $\alpha_{01}=\alpha_{10}=0$ on $\Gamma_1$ by \eqref{alpha10}.
By the formulas of $\alpha_{11}$ and $\alpha_{12}=\alpha_{21}$ in \eqref{alpha11} and \eqref{alpha12}, one can deduce that
\begin{equation}\label{bdr observation}
\frac{\alpha_{12}}{\alpha_{11}}\Big|_{z_1=0}=\frac{\mcw_1'+\mcw_2'}{p+\mcw_1'\mcw_2'+\sigma'(\mcw_1'+\mcw_2')}=\frac{\bar{b}_2}{\bar{b}_1}.
\end{equation}
In fact, we have
\begin{align}
&(\tilde{a}_{11}-\sigma^\prime\tilde{a}_{12})|_{y_1\sigma(y_2)}\nonumber\\
&\qquad=c^2(-p^2-\mcw_2'^2-p\sigma^\prime\mcw_2'-\sigma^\prime\mcw_2')+(\partial_{x_1}\Phi)^2(p^2+p\sigma^\prime\mcw_2')\nonumber\\
&\qquad\qquad+(\partial_{x_2}\Phi)^2(\mcw_2'^2+\sigma^\prime\mcw_2')+\partial_{x_1}\Phi\partial_{x_2}\Phi(-2p\mcw_2'-p\sigma^\prime-\sigma^\prime\mcw_2'^2)\nonumber\\
&\qquad=c^2(-p^2-\mcw_2'^2-p\sigma^\prime\mcw_2'-\sigma^\prime\mcw_2')\nonumber\\
&\qquad\qquad+\partial_{x_1}\Phi(p^2+p\sigma^\prime\mcw_2')(\partial_{x_1}\Phi-\mcw_1'\partial_{x_2}\Phi)\nonumber\\
&\qquad\qquad-\frac{(\mcw_2'^2+\sigma^\prime\mcw_2')}{\mcw_1'}\partial_{x_2}\Phi(\partial_{x_1}\Phi-\mcw_1'\partial_{x_2}\Phi)\nonumber\\
&\qquad=c^2(-p^2-\mcw_2'^2-p\sigma^\prime\mcw_2'-\sigma^\prime\mcw_2'),
\end{align}
where in the last equality, we have used the slip boundary condition $\eqref{vertical slip boundary condition}$.
By direct calculation, one also has $$(-\tilde{a}_{12}+\sigma^\prime\tilde{a}_{22})|_{y_1=\sigma(y_2)}=-c^2(p\mcw_2'+\mcw_2'+\sigma^\prime\mcw_2'^2+\sigma^\prime).$$
Thus one has
\begin{align}
\frac{\tilde{a}_{11}-\sigma\tilde{a}_{12}}{\tilde{a}_{22}\sigma'-\tilde{a}_{12}}\bigg|_{z_1=0}&=\frac{p^2+\mcw_2'^2+p\sigma'\mcw_2'+\sigma'\mcw_2'}{p\mcw_2'+\mcw_2'+\sigma'\mcw_2'^2+\sigma'}\nonumber\\
&=\frac{p+\mcw_1'\mcw_2'}{\mcw_1'+\mcw_2'}.\nonumber
\end{align}

Then by the formula of $\alpha_{11}$ in \eqref{alpha11} and the formula of $\alpha_{12}$ in \eqref{alpha12}, we obtain
\begin{align}
\frac{\alpha_{11}}{\alpha_{12}}\bigg|_{z_1=0}&=\frac{\tilde{a}_{11}-\sigma\tilde{a}_{12}}{\tilde{a}_{22}\sigma'-\tilde{a}_{12}}\bigg|_{z_1=0}+\sigma'\nonumber\\
&=\frac{p+\mcw_1'\mcw_2'}{\mcw_1'+\mcw_2'}+\sigma'=\frac{\bar{b}_1}{\bar{b}_2}.\nonumber
\end{align}
Hence one has
\[
\frac{\alpha_{12}}{\alpha_{11}}\bigg|_{z_1=0}=\frac{\bar{b}_2}{\bar{b}_1}.
\]
$(\romannumeral3)$: It is clear that both $\mathbb{T}_1$ and $\mathbb{T}_2$ are invertible and $\mathbb{T}_1\circ \mathbb{T}_2$ maps the orgin in the $\mbx$-coordinate to the origin in the $\mbz$-coordinate, where $\mbz\defs(z_1,z_2)$ is the spatial variable in new coordinate under transformation $\mathbb{T}_2$. So by \eqref{2.33} and the assumption $(\textbf{A2})$ on the perturbations in section \ref{sec:1}, it is easy to see that \eqref{lem2.1-2.33} holds.
This completes the proof of this lemma.
\end{proof}
In $\mbz$-coordinate, \textbf{Problem 1} can be reformulated as the following problem:

\smallskip
\textbf{Problem 2:} Does there exsit a unique local classical solution to problem \eqref{eq:nolinear in z-coordinate}, when $(\mathcal{W}_1,\mathcal{W}_2)$ are small perturbations of $(\overline{\mcw_1},\overline{\mcw_2})=(0,0)$ and the initial data $(\hat{\Phi}_0(z_1,z_2),\hat{\Phi}_1(z_1,z_2))$ are small perturbations of $(0,0)$?

\smallskip
 Since coordinate transformations $\mathbb{T}_1$ and $\mathbb{T}_2$ are invertible when the perturbations are small, \textbf{Problem 1} and \textbf{Problem 2} are equivalent. The main theorem of the paper is as follows, which gives a positive answer to \textbf{Problem 2}.
\begin{thm}\label{main theorem}
Suppose the perturbed solid walls satisfy the three assumptions $(A1)$ and $(A2)$ in section \ref{sec:1}. Then there exists a constant $\epsilon>0$ such that if
\begin{align}
	\|\hat{\Phi}_0\|_{H^4(\Omega)}+\|\hat{\Phi}_1\|_{H^3(\Omega)}+\|(\mcw_1,\mcw_2)\|_{W^{6,\infty}(\mathbb{R}^+)}\le \epsilon,
\end{align}
and $\hat{\Phi}_0$ and $\hat{\Phi}_1$ satisfy the compatibility conditions up to order 2 and are compactly supported in some neighbours of the origin, then there exist two constants $\eta_0\ge 1$ and $T_0>0$ such that the nonlinear problem \eqref{eq:nolinear in z-coordinate} admits a unique smooth solution $\hat{\Phi}\in H^{4}(\Omega_{T_0})$, satisfying
\begin{align}
	\|e^{-\eta z_0}\hat{\Phi}\|_{H^{4}(\Omega_{T})}\le C\epsilon
\end{align}
for $\eta\ge \eta_0$ and $T\le T_0$,
where $C=C(\rho_0,\gamma,\eta_0,T_0)$ is a positive constant.
\end{thm}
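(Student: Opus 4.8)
The plan is to establish Theorem~\ref{main theorem} by a classical linearization--iteration scheme, the engine of which is a well-posedness and \emph{a priori} estimate theory --- first in $H^2$, then in $H^4$ --- for the linearization of problem~\eqref{eq:nolinear in z-coordinate} on the quadrant $\Omega$. I note that in \eqref{2.32}--\eqref{2.33} the boundary coefficients $\bar b_1,\bar b_2$ depend only on $(\mcw_1,\mcw_2)$, not on $\hat\Phi$, while the interior coefficients $\alpha_{ij},\alpha_i$ depend on $D\hat\Phi$. So I would define iterates $\hat\Phi^{(n)}$ by solving the linear problem obtained from \eqref{eq:nolinear in z-coordinate} by freezing $\alpha_{ij}=\alpha_{ij}(D\hat\Phi^{(n)})$, $\alpha_i=\alpha_i(D\hat\Phi^{(n)})$ and retaining the (linear, $\hat\Phi$-independent) boundary operators on $\Gamma_1,\Gamma_2$ and the prescribed data, starting from a $\hat\Phi^{(0)}$ constructed from $\hat\Phi_0,\hat\Phi_1$ via the compatibility conditions so that it already satisfies the boundary relations. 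Since every iterate then satisfies $\partial_{z_2}\hat\Phi^{(n)}=0$ on $\Gamma_2$ and $\bar b_1\partial_{z_1}\hat\Phi^{(n)}+\bar b_2\partial_{z_2}\hat\Phi^{(n)}=0$ on $\Gamma_1$ --- exactly the identities used to prove Lemma~\ref{lem: 2.1} --- the three structural properties of that lemma persist for the frozen coefficients, which remain in a small neighbourhood of the diagonal matrix~\eqref{2.30}.

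For existence in $H^2$ (Section~\ref{sec:3}) I would use the extension method to remove the corner. Reflecting $\hat\Phi$ evenly across $\Gamma_2=\{z_2=0\}$ and extending the frozen coefficients with the parity dictated by their $z_2$-power, property $(\romannumeral1)$ of Lemma~\ref{lem: 2.1} --- the vanishing of the odd-parity coefficients $\alpha_{02},\alpha_{12}$ on $\Gamma_2$, together with the compact support of the data near the origin --- makes the extended coefficients continuous, so the reflected problem becomes an initial--boundary value problem on the half-space $\{z_1>0\}$ with the single boundary $\Gamma_1$ and no corner. On that half-space, multiplying the equation by $\partial_{z_0}\hat\Phi$ and integrating by parts produces on $\Gamma_1$ the flux term $-\big(\alpha_{10}\partial_{z_0}\hat\Phi+\alpha_{11}\partial_{z_1}\hat\Phi+\alpha_{12}\partial_{z_2}\hat\Phi\big)\partial_{z_0}\hat\Phi$, which vanishes because $\alpha_{10}=0$ on $\Gamma_1$ and because the co-normality $(\romannumeral2)$ makes the remaining bracket a multiple of the boundary operator $\bar b_1\partial_{z_1}\hat\Phi+\bar b_2\partial_{z_2}\hat\Phi$; thus the first-order energy estimate closes with no uncontrolled boundary term. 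Differentiating in the tangential directions $z_0,z_2$ and recovering $\partial_{z_1}^2\hat\Phi$ from the equation ($\alpha_{11}$ being bounded away from $0$) yields the $H^2$ estimate, existence follows by the usual duality/Galerkin argument with a weight $e^{-\eta z_0}$, $\eta$ large, absorbing lower-order terms, and restricting the even extension to $\Omega$ recovers a solution of the original linear problem.

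The principal difficulty, handled in Section~\ref{sec:4} directly in the cornered domain, is the $H^4(\Omega_T)$ estimate required by the nonlinear iteration: the extended coefficients are not regular enough across $\{z_2=0\}$ for higher-order estimates, and the Neumann-type boundary conditions violate the uniform Lopatinski condition, so the mechanism of \cite{Majda1987,FHXX,FXX} for controlling boundary traces of top-order derivatives is unavailable. My plan is to make the co-normal structure do that job. Applying only genuinely tangential operators --- $\partial_{z_0}$ everywhere, $\partial_{z_0},\partial_{z_1}$ near $\Gamma_2$, $\partial_{z_0},\partial_{z_2}$ near $\Gamma_1$ --- I would derive weighted energy estimates for all tangential derivatives up to order four, using that the condition $\partial_{z_2}\hat\Phi=0$ is preserved under $\partial_{z_0},\partial_{z_1}$, that the co-normal condition on $\Gamma_1$ is preserved under $\partial_{z_0}$ and under $\partial_{z_2}$ up to a commutator controlled by $(\romannumeral1)$ and $(\romannumeral3)$, and that the boundary flux terms again vanish (on $\Gamma_1$ by co-normality, on $\Gamma_2$ by $(\romannumeral1)$ plus the Neumann condition); the remaining mixed and normal derivatives, as well as the boundary traces of the highest derivatives, are then recovered algebraically from the equation and the boundary relations using the uniform nondegeneracy of $\alpha_{11}$ and $\alpha_{22}$ in the normal directions. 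I expect the delicate point to be matching these local estimates at the corner point, where only $\partial_{z_0}$ is tangential to both edges; there the assumption $(\mathbf{A2})$ and Lemma~\ref{lem: 2.1}$(\romannumeral3)$ --- the vanishing of $\mcw_i',\mcw_i'',\mcw_i'''$ and of $\bar b_2$ with its first two $z_2$-derivatives at the origin --- provide exactly the extra corner compatibility needed to close the scheme, and this is where I expect the bulk of the technical work to lie.

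Finally (Section~\ref{sec:5}), choosing $\epsilon$ small and $\eta_0$ large so that the quasilinear and lower-order contributions are absorbed through the $e^{-\eta z_0}$ weight gives the uniform bound $\|e^{-\eta z_0}\hat\Phi^{(n)}\|_{H^4(\Omega_T)}\le C\epsilon$ for $\eta\ge\eta_0$, $T\le T_0$; then $\hat\Phi^{(n+1)}-\hat\Phi^{(n)}$ solves a linear problem with quadratically small data, so for $T_0$ small the sequence is contractive in a lower-order norm, say $H^2(\Omega_T)$. The limit $\hat\Phi\in H^4(\Omega_{T_0})$, obtained by weak-$*$ compactness, solves \eqref{eq:nolinear in z-coordinate}, is classical since $H^4\hookrightarrow C^2$ in the three variables $(z_0,z_1,z_2)$, and is unique by the basic energy estimate for the difference of two solutions; the weighted bound in the statement is inherited from the uniform iteration bound, and transporting the result back through the invertible transformations $\mathbb{T}_1$ and $\mathbb{T}_2$ answers \textbf{Problem 1}.
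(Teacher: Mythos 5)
Your overall skeleton matches the paper's: reflection/extension across $\Gamma_2$ to get $H^2$ solvability of the linearized (frozen-coefficient) problem, higher-order estimates carried out directly in the cornered domain using the co-normal structure of Lemma \ref{lem: 2.1}, and a nonlinear iteration closed by high-norm boundedness plus low-norm contraction. The genuine gap is in the step that the whole paper is really about, namely the third- and fourth-order estimates: your plan of "applying only genuinely tangential operators" with a localization ($\partial_{z_0},\partial_{z_1}$ near $\Gamma_2$, $\partial_{z_0},\partial_{z_2}$ near $\Gamma_1$) and then "recovering the remaining derivatives algebraically from the equation" does not close near the corner, and you yourself only promise that (A2) and Lemma \ref{lem: 2.1}(iii) will supply "exactly the extra compatibility needed" without giving the mechanism. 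Any cutoff supported near the corner meets both edges, so whichever of $\partial_{z_1},\partial_{z_2}$ you differentiate with, the energy identity produces a boundary flux on one edge containing top-order normal derivatives; and the single scalar equation lets you solve for $\partial_{z_1}^2$-type derivatives \emph{or} $\partial_{z_2}^2$-type derivatives in terms of the rest, not both, so pure $\partial_{z_0}$-estimates plus the equation cannot deliver all fourth-order spatial derivatives at the corner. Your parenthetical claim that the flux on $\Gamma_1$ "vanishes by co-normality" is also only true at first order; for $\partial_{z_2}^2\vp$ and $\partial_{z_2}^3\vp$ it equals $\frac{r_{11}}{b_1}\,[\partial_{z_2}^k,\mathcal{B}]\vp\cdot\partial_{z_0}\partial_{z_2}^k\vp$, which is lower order but by no means zero, and at fourth order its control is where the real work sits.

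What the paper actually does, and what is missing from your proposal, is the following concrete machinery. It does not localize: it estimates $\partial_{z_2}^2\vp$ and $\partial_{z_2}^3\vp$ globally; on $\Gamma_2$ the flux does not vanish but the odd normal derivative is substituted from the equation, e.g. $\partial_{z_2}^3\vp|_{z_2=0}=\frac{1}{r_{22}}\mathcal{L}(\partial_{z_2}\vp)$, which is legitimate precisely because $r_{02}=r_{12}=0$ on $\Gamma_2$ and $\partial_{z_2}\vp|_{z_2=0}=0$; on $\Gamma_1$ the flux is rewritten as the commutator above, and both boundary integrals are then converted back into volume integrals by the Gauss theorem, followed by an integration by parts in $z_0$ to move the extra time derivative off the top-order factor, with Sobolev embedding/trace inequalities and the smallness $\delta$ absorbing the result into the left-hand side. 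At fourth order this produces genuinely corner-supported trace terms (integrals over the edge $(z_0,0,0)$, e.g. the terms called $\mathcal{I}_{4,1}$ and $\mathcal{J}_{2,2}^1$ in the paper), and these are annihilated only by the quantitative corner conditions $b_2(0,0)=\partial_{z_2}b_2(0,0)=0$ of Lemma \ref{lem: 2.1}(iii), which force $\partial_{z_1}\vp$, $\partial_{z_1z_2}\vp$ and $\partial_{z_1}\partial_{z_2}^2\vp$ to vanish on that edge. Your proposal names the right structural inputs but stops exactly where this analysis begins; without it (or an equivalent, e.g. weighted elliptic regularity in the right-angled corner for the co-normal problem, which you would then have to prove), the $H^4$ a priori estimate — and hence the iteration and Theorem \ref{main theorem} — is not established. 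The $H^2$ step is also lighter in your sketch than in the paper (the odd/even extensions are only Lipschitz, so the paper mollifies in $z_2$ and invokes Ikawa's theorem before passing to the limit, then uses uniqueness plus the reflection symmetry to recover the Neumann condition), but that is a fixable detail rather than a gap in the idea.
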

\begin{rmk}\label{nonlinear compatibility conditons}
    The compatibility conditions mentioned in Theorem \ref{main theorem} come from the requirement that the initial-boundary data of problem \eqref{eq:nolinear in z-coordinate} should be consistent. More precisely, by initial conditions in \eqref{eq:nolinear in z-coordinate} and the first equation of \eqref{eq:nolinear in z-coordinate}, we know that at $z_0=0$,
\[
D^{\beta}\hat{\Phi}=D^{\beta}\hat{\Phi}_0,\quad \partial_{z_0}D^{\beta}\hat{\Phi}=D^{\beta}\hat{\Phi}_1
\]
and
\[
\quad \partial_{z_0}^2D^{\beta}\hat{\Phi}=-D^\beta(\frac{1}{\alpha_{00}}(\sum_{i=0}^2\alpha_i\partial_{z_i}\hat{\Phi}+\sum_{(i,j)\neq(0,0)}^2\alpha_{ij}\partial_{z_iz_j}\hat{\Phi})),
\]
where $D^{\beta}=\partial_{z_1}^{\beta_1}\partial_{z_2}^{\beta_2}$ is the spatial derivatives and $\beta=(\beta_1,\beta_2)$ is the multi-index corresponds to spatial derivative.
Then by the induction on $k$ (\emph{i.e.}, assume we have already known the expression of $\partial_{z_0}^{m+1}D^{\beta}\hat{\Phi}$ at $z_0=0$ for all $m\leq k$.) and by taking derivative $D^{\beta}\partial_{z_0}^k$ on equation $\eqref{eq:nolinear in z-coordinate}_{1}$, we will have the expression of $\partial_{z_0}^{k+2}D^{\beta}\hat{\Phi}$ at $z_0=0$. We omit the details for the shortness. Then we have the expression of $D^{\lambda}\hat{\Phi}$ at $z_0=0$ for all multi-index $\lambda=(\lambda_0,\lambda_1,\lambda_2)$. Let
\begin{equation}\label{3.29x}
\Phi_{\lambda}:=D^{\lambda}\hat{\Phi}\big|_{z_0=0}.
\end{equation}
On the other hand, since we have two boundary conditions in $\eqref{eq:nolinear in z-coordinate}$, for any $(k_0,k_1,k_2)\in\mathbb{N}^3$, we have
\begin{align}
	D^{(k_0,0,k_2)}\left(\sum_{i=1}^2\bar{b}_i\partial_{z_i}\hat{\Phi}\right)&=0\quad\mbox{on }\Gamma_1,\nonumber\\
	D^{(k_0,k_1,0)}\partial_{z_2}\hat{\Phi}&=0\quad\mbox{on }\Gamma_2.\nonumber
\end{align}
Let $z_0=0$ and plug \eqref{3.29x} into the two identities above for all integers $k_0+k_1\leq \mathfrak{s}$ and $k_0+k_2\leq \mathfrak{s}$. Then we can obtain the identities that the initial and boundary data must satisfy for all integers $k_0+k_1\leq \mathfrak{s}$ and $k_0+k_2\leq \mathfrak{s}$. These identities are called the compatibility conditions up to order $\mathfrak{s}$. In this paper, the compatibility conditions up to order two are required, i.e., $\mathfrak{s}=2$.
\end{rmk}

\section{The linearized problem (\Rmnum{1}): existence of the solution in $H^2(\Omega_T)$}\label{sec:3}

In order to establish the local well-posedness of the nonlinear initial-boundary value problem, the classical iteration scheme will be carried out. Then the unique existence of the solutions to the linearized problem \eqref{linear problem} as well as the \textit{a priori} estimates of the solution are needed to establish the convergence of the iteration. However, there is no general theory which could be employed, because of the presence of a corner singularity on the boundary of the space domain. Therefore, we have to establish a well-posedness theorem on the unique existence and \emph{a priori} estimates of the solution for the linearized problem, which will be done in this and the next section. 
In this section, the extension techniques will be employed to establish the existence of the solution in $H^2(\Omega_T)$.
Similar as in \cite{FHXX,FXX}, the regularity of the coefficients in the extended domain is not sufficient to establish higher order estimates of the solution.
Therefore, one have to establish the estimates for the higher order derivatives of the solution directly in the cornered space domain, which will be done in the next section.



\subsection{The main theorem on the linearized problem.}

Let us consider following initial-boundary value problem:
\begin{equation}\label{linear problem}
\begin{cases}
\mathcal{L}\varphi=f, &\mbox{in\ } \Omega_T,\\
\mathcal{B}\varphi=0,&\mbox{on \ }\Gamma_1,\\
\partial_{z_2}\varphi=0,&\mbox{on \ }\Gamma_2,\\
\varphi(0,z_1,z_2)=\varphi_0,&\mbox{on \ }\Gamma_0,\\
\partial_{z_0}\varphi(0,z_1,z_2)=\varphi_1(z_1,z_2),&\mbox{on \ }\Gamma_0,
\end{cases}
\end{equation}
where 
$$\mathcal{L}\defs \sum_{i,j=0}^2r_{ij} (z_0,z_1,z_2)\partial_{ij}
\qquad\mbox{and}\qquad
\mathcal{B}\defs b_1(z_1,z_2)\partial_1+b_2(z_1,z_2)\partial_2,
$$ 
$\partial_{i}\defs\partial_{z_i}$, and $\partial_{ij}=\partial_{z_iz_j}$.
The coefficients of $\mathcal{L}$ and $\mathcal{B}$ satisfy
\begin{enumerate}[label=(\roman*)]
\item  $\mathcal{L}$ is a hyperbolic differential operator of second order. $r_{ij}=r_{ij}(z_0,z_1,z_2)$ are smooth functions and $r_{00}\equiv 1$. $b_i$ are smooth functions with respect to $z_1$ and $z_2$ but do not depend on $z_0$.
\item There exist an integer $s_0\ge 3$, and constants $\delta>0$ and $\bar{r}_{ij}$ $(0\le i,j\le 2)$, such that $$\sup_{0\le z_0\le T}\|D^\alpha(r_{ij}-\bar{r}_{ij})\|_{L^2(\Omega)}<\delta \mbox{\ for\ all\ } |\alpha|\le s_0,$$
where $\bar{r}_{00}=1$, $\bar{r}_{11}=\bar{r}_{22}<0$, and $\bar{r}_{ij}=0$ for $i\neq j$.
We also require that
$$ |\partial^{\ell+1}_{z_2}b_1|+|\partial^{\ell}_{z_2}b_2|\le C\delta\qquad 
\mbox{for } \ell=0,1,2.
$$
\item $r_{12}=r_{21}=r_{20}=r_{02}=0$ on the boundary $\Gamma_2$ and $\partial_{z_2}^k b_2(0,0)=0$ for $k=0,1,2$.
\item The following identities hold on $\Gamma_1$:
$$
\frac{r_{12}}{r_{11}}=\frac{b_2}{b_1}\quad\mbox{ and}\quad r_{01}=r_{10}=0.
$$
\end{enumerate}
\begin{rmk}
The compatibility conditions up to order $\mathfrak{s}$ of problem \eqref{linear problem} can be defined in the same way as done in remark \ref{nonlinear compatibility conditons}. 
\end{rmk}
Then we have the following proposition.
\begin{prop}\label{well-posedness}
There exists $\delta_*>0$ such that if assumptions $(\romannumeral1)-(\romannumeral4)$ holds for $\delta\le \delta_*$, and if $\vp_0$ and $\vp_1$ satisfy the compatibility conditions up to order 2, problem $\eqref{linear problem}$ admits a smooth solution $\vp\in H^4(\Omega_T)$ and there exists a constant $\eta_0$, such that  for any $T>0$ and $\eta\geq\eta_0$, it holds that
    \begin{align}
    &\sum_{|\alpha|\le 4}\eta  \|e^{-\eta z_0}D^
    \alpha \vp\|^2_{L^2(\Omega_T)}+e^{-2\eta T}\|D^
    \alpha \vp(T,\cdot)\|^2_{L^2(\Omega)}\nonumber\\
    &\quad\lesssim \frac{1}{\eta}\sum_{|\alpha|\le 3}\|e^{-\eta z_0}\mathcal{L}(D^\alpha\vp)\|^2_{L^2(\Omega_T)}
    +\|e^{-\eta z_0}f\|^2_{H^{3}(\Omega_T)}\nonumber\\
    &\quad\quad+\|f|_{t=0}\|^2_{H^2(\Omega)}+\|\vp_0\|_{H^4(\Omega)}^2+\|\vp_1\|^2_{H^3(\Omega)}.\label{energy estimate}
\end{align}
\end{prop}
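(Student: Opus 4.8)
The plan is to prove the proposition in the two stages announced above. First one constructs a solution of limited regularity by an extension argument across the two flat edges $\Gamma_1$ and $\Gamma_2$, and then one upgrades it to $H^4(\Omega_T)$ together with the weighted estimate \eqref{energy estimate} by establishing higher-order a priori bounds directly in the cornered domain $\Omega_T$, the crucial input being the co-normal structure of the boundary operator recorded in assumption (iv) (equivalently Lemma \ref{lem: 2.1}).

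\textbf{Step 1: existence in $H^2(\Omega_T)$ by reflection.} Since $\Omega$ is the quarter plane with flat edges $\Gamma_1=\{z_1=0\}$ and $\Gamma_2=\{z_2=0\}$, I would reflect the problem across each edge so as to reduce it to a Cauchy problem for a second-order hyperbolic equation on the whole plane. Across $\Gamma_2$ the condition $\partial_{z_2}\vp=0$ is of Neumann type, so $\vp$ is extended as an even function of $z_2$; by assumption (iii) the coefficients $r_{02},r_{12}$, which must be extended as odd functions, vanish on $\Gamma_2$, so the extended coefficients are continuous and the extended operator is again hyperbolic of the same type. Across $\Gamma_1$ the boundary operator $\mathcal{B}=b_1\partial_1+b_2\partial_2$ is, by assumption (ii), an $O(\delta)$ perturbation of $\partial_1$, and by assumption (iv) it is co-normal; after straightening the co-normal direction to a coordinate axis the boundary condition becomes Neumann-type and, simultaneously, the corresponding mixed second-order coefficient vanishes on $\Gamma_1$ --- this is precisely the content of co-normality --- so that an even reflection again produces a continuous extension. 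Reflecting across both edges yields a Cauchy problem on $\mathbb{R}\times\mathbb{R}^2$ whose coefficients are close to those of the constant-coefficient wave operator; the compatibility conditions up to order $2$ guarantee that the reflected initial data are consistent. The classical energy method for the Cauchy problem then furnishes a solution which, restricted to $\Omega_T$, lies in $H^2(\Omega_T)$, and uniqueness in $\Omega_T$ follows from finite propagation speed and the basic $(|\alpha|\le1)$ weighted energy inequality. The reflected coefficients have only limited regularity across the interfaces (their normal derivatives generically jump), which is exactly why higher regularity cannot be obtained this way and Step 2 is needed.

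\textbf{Step 2: higher-order a priori estimates in $\Omega_T$.} For each multi-index $\alpha$ with $|\alpha|\le4$ I would differentiate $\mathcal{L}\vp=f$, writing $\mathcal{L}(D^\alpha\vp)=D^\alpha f+[\mathcal{L},D^\alpha]\vp$, and run the weighted $L^2$ estimate: multiply by $e^{-2\eta z_0}\partial_{z_0}(D^\alpha\vp)$ and integrate over $\Omega_T$. Integrating by parts produces, on the left, the bulk term $\eta\,\|e^{-\eta z_0}D^\alpha\vp\|_{L^2(\Omega_T)}^2$ and the time-slice term $e^{-2\eta T}\|D^\alpha\vp(T,\cdot)\|_{L^2(\Omega)}^2$; on the right, the initial-data terms at $z_0=0$, which via the compatibility relations (as in Remark \ref{nonlinear compatibility conditons}) are bounded by $\|\vp_0\|_{H^4(\Omega)}$, $\|\vp_1\|_{H^3(\Omega)}$ and $\|f|_{t=0}\|_{H^2(\Omega)}$; the interior commutator and coefficient-derivative terms, which by assumption (ii) (smallness of $r_{ij}-\bar{r}_{ij}$ in $H^{s_0}$, $s_0\ge3$) together with Moser-type product estimates are bounded by $\delta$ times the left side plus $\frac{1}{\eta}\|e^{-\eta z_0}\mathcal{L}(D^\alpha\vp)\|_{L^2(\Omega_T)}^2$ and hence absorbed once $\delta\le\delta_*$ and $\eta\ge\eta_0$; and the boundary integrals over $\Gamma_1$ and $\Gamma_2$ of the top-order derivatives. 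On $\Gamma_2$, since $r_{02}=r_{12}=0$ there, this integral is essentially $\int_{\Gamma_2}e^{-2\eta z_0}r_{22}\,\partial_{z_2}(D^\alpha\vp)\,\partial_{z_0}(D^\alpha\vp)$, which vanishes when $\alpha$ is tangential to $\Gamma_2$ (then $\partial_{z_2}(D^\alpha\vp)=D^\alpha(\partial_{z_2}\vp)=0$) and is reduced to tangential derivatives otherwise by using the equation on $\Gamma_2$. On $\Gamma_1$, co-normality ($(0,r_{11},r_{12})\parallel(0,b_1,b_2)$ and $r_{01}=r_{10}=0$) makes the integral proportional to $\int_{\Gamma_1}e^{-2\eta z_0}\mathcal{B}(D^\alpha\vp)\,\partial_{z_0}(D^\alpha\vp)$, and for $\alpha$ tangential to $\Gamma_1$ one has $\mathcal{B}(D^\alpha\vp)=-[D^\alpha,\mathcal{B}]\vp$, an $O(\delta)$ quantity since the commutator involves only derivatives of the small $b_1,b_2$; when $\alpha$ contains $\partial_{z_1}$ the normal derivative is traded using $\mathcal{L}$ and the ratio relation in (iv). Finally, integrations by parts in $z_1$ near $\Gamma_2$ (and in $z_2$ near $\Gamma_1$) generate one-dimensional integrals along the corner edge $\Gamma_1\cap\Gamma_2=\{0\}$; the contributions from the two edges must cancel or be absorbable, and this is where $\partial_{z_2}^k b_2(0,0)=0$ for $k=0,1,2$ is used, forcing the mismatched corner terms to be of high enough order. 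Summing over $|\alpha|\le4$ closes \eqref{energy estimate}, and combining this a priori estimate with Step 1 through a standard difference-quotient/mollification bootstrap yields $\vp\in H^4(\Omega_T)$.

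\textbf{The main obstacle.} I expect the genuine difficulty to be the control of the top-order boundary traces on $\Gamma_1$ and $\Gamma_2$ and, most of all, their interaction at the corner. Because the Neumann-type conditions here violate the uniform Kreiss--Lopatinskii condition, the Majda-type boundary estimates used in \cite{FHXX,FXX,Majda1987} are unavailable; the co-normal structure is the substitute, letting one replace the dangerous co-normal boundary derivative of $D^\alpha\vp$ by $\mathcal{B}(D^\alpha\vp)$ --- which is controlled by differentiating the boundary relation --- up to commutators made small by assumption (ii). Carrying this out simultaneously on both edges and matching the corner contributions is the technical heart of Section \ref{sec:4}.
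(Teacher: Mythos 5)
Your Step 2 is essentially the paper's Section \ref{sec:4}: differentiate the equation, use the weighted multiplier $e^{-2\eta z_0}\partial_{z_0}D^\alpha\vp$, rewrite the $\Gamma_1$ boundary integrals via co-normality as multiples of $\mathcal{B}(D^\alpha\vp)=-[D^\alpha,\mathcal{B}]\vp$, reduce the $\Gamma_2$ traces through the equation (e.g.\ $\partial_{z_2}^3\vp=\tfrac1{r_{22}}\mathcal{L}(\partial_{z_2}\vp)$ on $\{z_2=0\}$), and use $\partial_{z_2}^k b_2(0,0)=0$ to kill the corner contributions; the paper does exactly this, making the corner cancellations explicit through identities such as $\partial_{z_1z_2}\vp(z_0,0,0)=0$ and $\mathcal{B}(\partial_{z_2}^2\vp)(z_0,0,0)=0$.

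The genuine gap is in your Step 1. You propose to reflect across \emph{both} edges and reduce to a Cauchy problem on the whole plane, arguing that on $\Gamma_1$ one can first straighten the co-normal direction so that the condition becomes Neumann with vanishing mixed coefficient and then reflect evenly. The straightening map that sends $\mathcal{B}=b_1\partial_1+b_2\partial_2$ to a coordinate derivative is only $O(\delta)$-close to the identity, but it does not preserve $\{z_2=0\}$: in the new variables $\Gamma_2$ becomes a curved boundary, the Neumann condition there becomes oblique, and the vanishing of $r_{02},r_{12}$ on $\Gamma_2$ (assumption (\romannumeral3)), which is what makes the even/odd extension across $\Gamma_2$ Lipschitz, is destroyed. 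Conversely, after the even extension across $\Gamma_2$ the remaining boundary condition on $\Gamma_1$ is still the variable-coefficient co-normal one, so the second reflection cannot be performed without the straightening that undoes the first. Making the two reflections compatible at the corner is precisely the symmetry hypothesis of \cite{GS,Yuan2012} that this setting does not satisfy, and avoiding it is the point of the construction here. You also leave unexplained why the restriction of a whole-plane Cauchy solution would satisfy either boundary condition; some symmetry-plus-uniqueness argument is indispensable. The paper's route (Lemma \ref{H2 solvability}) is: extend only across $\Gamma_2$ (odd extension of $r_{02},r_{12},b_2$, even extension of the rest), keep the co-normal condition on $\Gamma_1$, mollify the merely $W^{1,\infty}$ extended coefficients in $z_2$, apply Ikawa's existence theorem \cite{IKAWA1968,IKAWA1970} for the second-order mixed problem on the half-space together with uniform-in-$\epsilon$ $H^2$ estimates, pass to the limit, and then recover $\partial_{z_2}\vp=0$ on $\Gamma_2$ from uniqueness and the evenness in $z_2$ of the extended problem. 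Without replacing your double reflection by an argument of this type (or otherwise producing a solution of the mixed problem on the quarter plane), the existence part of the proposition is not established, even though your a priori estimates in Step 2 would then close the proof as in Lemma \ref{higher regularity}.
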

Without loss of generality, we assume $(\vp_0,\vp_1)=(0,0)$ in the following two sections below. Otherwise, one can reduce problem \eqref{linear problem} into a problem with homogeneous initial data by introducing auxiliary functions. 

The proof of Proposition \ref{well-posedness} will be separated into two parts. One is the unique existence of the solution in $H^2(\Omega_T)$, which will be established in this section. The other is the \emph{a priori} estimates of higher order derivatives, which will be done in the next section.

To establish the unique existence of the solution, we are motivated to apply the extension techniques by observing the boundary condition on $ \Gamma_2 $ as well as the properties of the coefficients enjoyed, such that the well-established theory for initial-boundary value problems of hyperbolic equations can be employed. We are going to show the following lemma in this section, which establishes the well-posedness of problem \eqref{linear problem} in $H^2(\Omega_T)$.

\begin{lem}\label{H2 solvability}
There exists $\delta_1>0$ such that if assumptions $(\romannumeral1)-(\romannumeral4)$ hold for $\delta\le \delta_1$, problem $\eqref{linear problem}$ admits a solution $\vp\in H^2(\Omega_T)$ and there exists a constant $\eta_1$, such that for any $T>0$ and $\eta\geq\eta_1$,
    \begin{align}
    &\sum_{|\alpha|\le 2}\eta  \|e^{-\eta z_0}D^
    \alpha \vp\|^2_{L^2(\Omega_T)}+e^{-2\eta T}\|D^
    \alpha \vp(T,\cdot)\|^2_{L^2(\Omega)}\nonumber\\
    &\quad\lesssim \frac{1}{\eta}\sum_{|\alpha|\le 1}\|e^{-\eta z_0}\mathcal{L}(D^\alpha\vp)\|^2_{L^2(\Omega_T)}
    +\|e^{-\eta z_0}f\|^2_{H^{1}(\Omega_T)}
+\|f|_{t=0}\|^2_{L^2(\Omega)}.\label{energy estimate}
\end{align}
\end{lem}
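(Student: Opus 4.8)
The plan is to extend problem \eqref{linear problem} across the flat boundary $\Gamma_2=\{z_2=0\}$ by an even reflection in $z_2$, turning the mixed problem on the quadrant $\Omega$ into an initial-boundary value problem on the half-plane $\{z_1>0\}$ with only one (curved-edge) boundary $\Gamma_1$, for which the classical Hörmander–Majda-type $L^2$ theory for second-order hyperbolic equations applies. The crucial structural inputs are assumption (iii), which says $r_{12}=r_{21}=r_{02}=r_{20}=0$ on $\Gamma_2$ and $\partial_{z_2}^k b_2(0,0)=0$ for $k=0,1,2$ — exactly the conditions that make the odd/even parities of the reflected coefficients consistent — and the homogeneous Neumann condition $\partial_{z_2}\varphi=0$ on $\Gamma_2$, which is preserved by even extension of $\varphi$. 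First I would define, for $z_2<0$, $\check r_{ij}(z_0,z_1,z_2)$ by even reflection when $(i,j)$ involves the index $2$ an even number of times (i.e. $r_{00},r_{11},r_{22},r_{01},r_{10}$) and by odd reflection when it involves it an odd number of times ($r_{02},r_{20},r_{12},r_{21}$); since the latter vanish on $\Gamma_2$ by (iii), the odd extensions are Lipschitz (in fact $H^{s_0}$ after dividing out the vanishing factor $z_2$, using the quantitative bound in (ii)), and $\check r_{00}\equiv 1$ is preserved. Correspondingly I extend $f$ by even reflection in $z_2$ and extend the boundary coefficients $b_1,b_2$ on $\Gamma_1$: here $b_1$ is extended evenly and $b_2$ oddly, which is admissible because $\partial_{z_2}^k b_2(0,0)=0$ for $k\le 2$ makes the odd extension $C^{2,1}$ near the corner, and the parallelism identity $r_{12}/r_{11}=b_2/b_1$ from (iv) is then respected by the reflected coefficients on the extended $\Gamma_1$.

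Next I would check that the extended operator $\check{\mathcal L}$ remains strictly hyperbolic with respect to the time direction and that the extended boundary $\Gamma_1$ remains non-characteristic: both follow from the smallness in (ii), since at the background state the principal part is $\partial_{z_0}^2-\rho_0^{\gamma-1}(\partial_{z_1}^2+\partial_{z_2}^2)$ by \eqref{2.30} and the perturbation of size $O(\delta)$ cannot destroy these open conditions for $\delta\le\delta_1$. The boundary condition $\check{\mathcal B}\varphi=\check b_1\partial_{z_1}\varphi+\check b_2\partial_{z_2}\varphi=0$ on $\Gamma_1$ is a (co-normal, first-order) oblique derivative condition; even though, as emphasized in the introduction, it does not satisfy the uniform Lopatinskii/Kreiss–Sakamoto condition, the $H^2$-solvability at this stage can still be obtained because the co-normality (Lemma \ref{lem: 2.1}(ii), reflected in assumption (iv)) lets one absorb the boundary contribution: testing $\check{\mathcal L}\varphi$ against a suitable multiplier $e^{-2\eta z_0}\partial_{z_0}\varphi$ (and its first derivatives) and integrating by parts, the boundary integral over $\Gamma_1$ combines, via $r_{01}=r_{10}=0$ and $r_{11}b_2=r_{12}b_1$, with the trace of the boundary equation $\check{\mathcal B}\varphi=0$, so that the dangerous highest-order trace terms cancel or are controlled by $C\delta$ times the volume norm. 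This yields, after summing over $|\alpha|\le 1$ and choosing $\eta\ge\eta_1$ large to dominate lower-order terms with the factor $\eta$, the weighted estimate \eqref{energy estimate} on the extended half-plane; restricting back to $\Omega$ gives the stated bound, and the even symmetry of the constructed solution forces $\partial_{z_2}\varphi=0$ on $\Gamma_2$ so that $\varphi|_\Omega$ solves the original problem.

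For existence itself I would argue by the standard duality/Galerkin route for the extended problem: the a priori estimate just sketched holds for both $\check{\mathcal L}$ and its formal adjoint $\check{\mathcal L}^*$ with the adjoint boundary condition (again co-normal, again only losing the uniform Lopatinskii condition but still admitting the energy estimate because the same cancellation structure is symmetric), and the Hahn–Banach/Riesz argument then produces a weak solution in the weighted $L^2$ space; the a priori $H^2$ bound, applied to difference quotients in the tangential directions $z_0,z_1$ and then recovering the $\partial_{z_2}^2$ derivative from the equation (using $\check r_{22}$ bounded away from $0$, again by (ii)), upgrades the weak solution to $H^2(\Omega_T)$, and uniqueness is immediate from the estimate with $f=0$. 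The main obstacle I anticipate is precisely the boundary-term control on $\Gamma_1$: because the oblique condition is not uniformly Lopatinskii, one cannot invoke an off-the-shelf boundary estimate, and the argument must exploit the co-normal algebraic identity $r_{11}b_2=r_{12}b_1$ together with the vanishing of the mixed coefficients to show the boundary quadratic form is either a perfect derivative along $\Gamma_1$ or is $O(\delta)$-small; getting this to close at the $H^2$ level (here only first derivatives of $\varphi$ enter the boundary integrals, so the loss is mild) is the heart of the lemma, and it is exactly this mechanism that will have to be pushed much harder in Section \ref{sec:4} for the higher-order estimates.
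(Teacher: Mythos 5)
Your overall strategy coincides with the paper's: odd reflection of $r_{02},r_{20},r_{12},r_{21},b_2$ and even reflection of the remaining coefficients and of $f$ across $\{z_2=0\}$ (using assumption (iii)), weighted energy estimates with the multiplier $e^{-2\eta z_0}\partial_{z_0}\varphi$ in which the co-normality (iv) makes the $\Gamma_1$ boundary form vanish at first order, and recovery of $\partial_{z_2}\varphi=0$ on $\Gamma_2$ from the evenness of the solution (which, as in the paper, must be deduced from uniqueness of the extended problem). However, there is a genuine gap in your treatment of the boundary terms at the second-order level. When one differentiates \eqref{extended linear problem} in $z_2$ and tests with $2e^{-2\eta z_0}\partial_{z_0}\partial_{z_2}\varphi$, the boundary integrand on $\Gamma_1$ is $\tfrac{r_{11}}{b_1}\bigl([\partial_{z_2},\mathcal{B}]\varphi\bigr)\,\partial_{z_0}\partial_{z_2}\varphi\big|_{z_1=0}$: the commutator indeed contains only first derivatives, but it multiplies the \emph{trace of a second derivative}, so your parenthetical claim that ``only first derivatives of $\varphi$ enter the boundary integrals, so the loss is mild'' is false, and this term is not bounded by $C\delta$ times the interior $H^2$ norm (the trace theorem loses half a derivative). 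The paper closes this estimate by a specific device, see \eqref{eq:estimate of boundary term 2nd order}: the boundary integral is rewritten as $-\int_{\tilde\Omega_T}\partial_{z_1}(\cdots)\,dz$ by the Gauss theorem, and the worst resulting volume term, containing $\partial_{z_0z_1z_2}\varphi$, is handled by a further integration by parts in $z_0$, exploiting the smallness of $\partial_{z_2}b_i$ and the $z_0$-independence of $b_i$. Without this (or an equivalent mechanism) your $H^2$ estimate, and likewise the adjoint estimate your duality argument would require, does not close.

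On existence your route also departs from the paper's, and is the sketchiest part of the proposal. Because the extended coefficients are only $W^{1,\infty}$, the paper mollifies them in $z_2$, verifies in lemma \ref{lem:3.1} that the parities, smallness and co-normality survive mollification, invokes \cite{IKAWA1968,IKAWA1970} for the regularized problems, proves the uniform-in-$\epsilon$ second-order bound, and passes to the limit; the symmetry/uniqueness argument then yields $\partial_{z_2}\varphi=0$ on $\Gamma_2$. Your duality/Galerkin alternative is conceivable, but you would have to actually establish the backward adjoint a priori estimate with the adjoint co-normal condition (facing the same boundary-trace difficulty as above) and justify that the weak solution is regular enough for the difference-quotient step. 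Note also a concrete slip there: in the extended problem the only boundary is $\{z_1=0\}$, so the tangential directions are $z_0$ and $z_2$, and it is $\partial_{z_1}^2\varphi$ that is recovered from the equation using $r_{11}$ bounded away from zero, not $\partial_{z_2}^2\varphi$ via $r_{22}$ as you wrote.
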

As mentioned previously, the $H^2$ solvability of the linearized problem is not sufficient to yield smooth solutions of the nonlinear problem by carrying out nonlinear iteration. Therefore, we have to deduce higher order \emph{a priori} estimate of the solution derived in lemma \ref{H2 solvability}. However, one can easily check that the regularity of the coefficients of the equation in the extended domain is not sufficient to establish the needed higher order estimates. Hence, we shall go back to the cornered-space domain to establish the higher order estimates, and the following lemma will be proved in the next section.


\begin{lem}\label{higher regularity}
There exists $\delta_2>0$ such that if assumptions $(\romannumeral1)-(\romannumeral4)$ hold for $\delta\le \delta_2$, then there exists a constant $\eta_2>1$ such that for any $T>0$ and $\eta\ge \eta_2$, the $H^2(\Omega_T)$ solution of problem $\eqref{linear problem}$ satisfies
    \begin{align}
    &\sum_{|\alpha|\le 4}\eta  \|e^{-\eta z_0}D^
    \alpha \vp\|^2_{L^2(\Omega_T)}+e^{-2\eta T}\|D^
    \alpha \vp(T,\cdot)\|^2_{L^2(\Omega)}\nonumber\\
    &\lesssim \frac{1}{\eta}\sum_{|\alpha|\le 3}\|e^{-\eta z_0}\mathcal{L}(D^\alpha\vp)\|^2_{L^2(\Omega_T)}
    +\|e^{-\eta z_0}f\|^2_{H^{3}(\Omega_T)}
    +\|f|_{t=0}\|^2_{H^2(\Omega)}.\label{energy estimate in lemma 4.1}
\end{align}
\end{lem}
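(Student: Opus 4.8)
The plan is to establish \eqref{energy estimate in lemma 4.1} first as an \emph{a priori} estimate for a sufficiently smooth solution $\vp$ of \eqref{linear problem}, and then to recover the statement for the $H^2(\Omega_T)$ solution produced by Lemma \ref{H2 solvability} by a standard difference-quotient/mollification argument. Everything is carried out directly in $\Omega_T$, since the coefficients generated by the extension used in Lemma \ref{H2 solvability} cannot be differentiated three more times. The workhorse is the weighted multiplier method: for a derivative $v=D^\alpha\vp$ one tests $\mathcal{L}v=\mathcal{L}(D^\alpha\vp)$ against $e^{-2\eta z_0}\partial_{z_0}v$ and integrates over $\Omega_T$. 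Because $r_{00}\equiv 1$ and, at the background state, $\bar{r}_{11}=\bar{r}_{22}<0$ with all off-diagonal $\bar{r}_{ij}=0$, the principal part of $\mathcal{L}$ differs from the constant-coefficient wave operator $\partial_{z_0z_0}+\bar{r}_{11}(\partial_{z_1z_1}+\partial_{z_2z_2})$ only by an $O(\delta)$ perturbation, so the interior contribution produces the good terms $\eta\|e^{-\eta z_0}(\partial_{z_0}v,\partial_{z_1}v,\partial_{z_2}v)\|^2_{L^2(\Omega_T)}$ and $e^{-2\eta T}\|(\partial_{z_0}v,\partial_{z_1}v,\partial_{z_2}v)(T,\cdot)\|^2_{L^2(\Omega)}$, while every term carrying a (small, by $(\romannumeral2)$) derivative of a coefficient, or the factor $\eta$ against an off-diagonal coefficient, or the source $\mathcal{L}(D^\alpha\vp)$, is absorbed after first taking $\delta\le\delta_2$ small and then $\eta\ge\eta_2$ large --- this is exactly why $\mathcal{L}(D^\alpha\vp)$ is admissible on the right of \eqref{energy estimate in lemma 4.1} with the gain $1/\eta$. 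The slice $z_0=0$ is handled by expressing $\partial_{z_0}^k\vp|_{z_0=0}$ through $\partial_{z_0}^{k-2}f|_{z_0=0}$ and the (here vanishing) initial data, producing $\|f|_{t=0}\|^2_{H^2(\Omega)}$, while the terms $D^\alpha f$ give $\|e^{-\eta z_0}f\|^2_{H^3(\Omega_T)}$.

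The heart of the argument is the boundary flux generated by the integration by parts, and this is where the structural properties enter. When $D^\alpha$ is tangential to $\Gamma_2$ (a product of $\partial_{z_0}$ and $\partial_{z_1}$), property $(\romannumeral3)$ makes $r_{02},r_{12},r_{20},r_{21}$ vanish on $\Gamma_2$, so the surviving flux term contains $r_{22}\partial_{z_2}(D^\alpha\vp)$, and this trace vanishes as well since $\partial_{z_2}(D^\alpha\vp)=\partial_{z_0}^{a}\partial_{z_1}^{b}(\partial_{z_2}\vp)=0$ on $\Gamma_2$; hence the $\Gamma_2$-flux is identically zero. When $D^\alpha$ is tangential to $\Gamma_1$ (a product of $\partial_{z_0}$ and $\partial_{z_2}$), co-normality $(\romannumeral4)$, namely $r_{01}=r_{10}=0$ and $r_{12}/r_{11}=b_2/b_1$ on $\Gamma_1$, collapses the whole $\Gamma_1$-flux to $\int_{\Gamma_1}e^{-2\eta z_0}\tfrac{r_{11}}{b_1}\,\mathcal{B}(D^\alpha\vp)\,\partial_{z_0}(D^\alpha\vp)$, and since $\mathcal{B}\vp\equiv 0$ on $\Gamma_1$ one has $\mathcal{B}(D^\alpha\vp)=-[D^\alpha,\mathcal{B}]\vp$ there; every coefficient of $[D^\alpha,\mathcal{B}]$ is a $z_2$-derivative of $b_1$ or $b_2$, which is $O(\delta)$ by $(\romannumeral2)$ (together, near the corner, with the higher-order vanishing of $b_2$ on $\Gamma_2$ recorded in $(\romannumeral3)$ and the compact support of the data).

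This $\Gamma_1$-flux is $\delta$-small but still pairs an order-$|\alpha|$ commutator against $\partial_{z_0}(D^\alpha\vp)$, which for $|\alpha|=3$ is a fourth-order derivative with no $L^2(\Gamma_1)$ trace; controlling it is the point at which the co-normality is indispensable and the techniques of \cite{FHXX,FXX,Majda1987}, tailored to a uniformly stable edge, are not available. The decisive observation is that $\partial_{z_0}$ is tangential to $\Gamma_1$, so that $\partial_{z_0}(D^\alpha\vp)\big|_{\Gamma_1}=\partial_{z_0}\big((D^\alpha\vp)\big|_{\Gamma_1}\big)$ with $(D^\alpha\vp)\big|_{\Gamma_1}\in H^{1/2}(\Gamma_1)$ of norm $\lesssim\|\vp\|_{H^4(\Omega_T)}$, hence this trace lies in $H^{-1/2}(\Gamma_1)$; it can then be paired, in the $H^{1/2}$--$H^{-1/2}$ duality (equivalently, after one more integration by parts in $z_0$ along $\Gamma_1$, whose endpoint contributions are absorbed into the slice norms), against $\tfrac{r_{11}}{b_1}\mathcal{B}(D^\alpha\vp)=-\tfrac{r_{11}}{b_1}[D^\alpha,\mathcal{B}]\vp$, which lies in $H^{1/2}(\Gamma_1)$ with norm $O(\delta)\|\vp\|_{H^4(\Omega_T)}$. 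This bounds the $\Gamma_1$-flux by $C\delta\,\|e^{-\eta z_0}\vp\|^2_{H^4(\Omega_T)}$, again absorbed once $\delta$ is small.

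It remains to promote the tangential estimates to a bound on every $D^\alpha\vp$ with $|\alpha|\le 4$. Since both edges are non-characteristic ($r_{11}$ and $r_{22}$ stay bounded away from $0$), solving $\mathcal{L}\vp=f$ for $\partial_{z_1z_1}\vp$ (resp. $\partial_{z_2z_2}\vp$) and iterating expresses, in a neighbourhood of $\Gamma_1$ (resp. $\Gamma_2$), every fourth-order derivative of $\vp$ as a combination of derivatives carrying at most one $\partial_{z_1}$ (resp. one $\partial_{z_2}$) plus derivatives of $f$ of order $\le 2$; the former are controlled by the $\Gamma_1$-tangential energy estimates of order $\le 3$ (resp. the $\Gamma_2$-tangential ones), and a partition of unity subordinate to a neighbourhood of $\Gamma_1$, a neighbourhood of $\Gamma_2$ and the interior patches the local bounds together. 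I expect the main obstacle to be the overlap of these neighbourhoods near the corner, where a direction tangential to one edge is normal to the other and both reductions must be run simultaneously: keeping all the resulting top-order boundary traces under control there is possible precisely because on $\Gamma_1$ the flux is governed by the co-normal identity $(\romannumeral4)$ and on $\Gamma_2$ by the vanishing $(\romannumeral3)$, so that no uncontrolled trace is ever generated (the off-diagonal $r_{ij}$, which are $O(\delta)$, and the extra vanishing of $b_2$ at the corner absorb the mixed terms that the reduction bounces back and forth). Summing over $|\alpha|\le 3$, fixing $\delta\le\delta_2$ and then $\eta\ge\eta_2$, and justifying all the differentiations by mollifying the data and passing to the limit via Lemma \ref{H2 solvability}, yields \eqref{energy estimate in lemma 4.1}.
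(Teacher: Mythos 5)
Your treatment of the $\Gamma_1$ flux is essentially the paper's: co-normality (assumption $(\romannumeral4)$) turns $\mathcal{B}(D^\alpha\vp)$ into the commutator $-[D^\alpha,\mathcal{B}]\vp$ with $O(\delta)$ coefficients, and the top-order trace pairing is defused by moving one $\partial_{z_0}$ (the paper does this after converting the boundary integral into a volume integral by the Gauss theorem rather than by an $H^{1/2}$--$H^{-1/2}$ duality, but the mechanism is the same). The genuine gap is on $\Gamma_2$. Your energy estimates cover only two families: derivatives built from $\partial_{z_0},\partial_{z_1}$, whose $\Gamma_2$-flux indeed vanishes, and derivatives built from $\partial_{z_0},\partial_{z_2}$, whose $\Gamma_1$-flux is a commutator. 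But the second family is \emph{normal} to $\Gamma_2$: testing $v=\partial_{z_0}^a\partial_{z_2}^c\vp$ against $e^{-2\eta z_0}\partial_{z_0}v$ over the quarter plane produces on $\{z_2=0\}$ the flux $\int r_{22}\,\partial_{z_0}v\,\partial_{z_2}v$, i.e.\ traces of $\partial_{z_2}^3\vp$ and, at top order, $\partial_{z_2}^4\vp$, which neither vanish nor are small. These are exactly the terms on which most of Section \ref{sec:4} is spent (the last term of \eqref{eq: 1st step of 3rd order estimate} and the term $r_{22}\partial_{z_0}\partial_{z_2}^3\vp\,\partial_{z_2}^4\vp|_{z_2=0}$): the paper expresses the normal traces through the equation restricted to $\Gamma_2$ (\eqref{eq:bdry condition for p_z2z2_vp}, \eqref{3-58}--\eqref{3-60}), goes back into the domain by the Gauss theorem, integrates by parts in $z_0$ and, for the piece containing $\partial_{z_1}^2\partial_{z_2}^2\vp$, in $z_1$ along $\Gamma_2$, and the corner contributions generated by that last step are killed only by the corner identities $b_2(0,0)=\partial_{z_2}b_2(0,0)=0$ together with $\partial_{z_1}\vp(z_0,0,0)=\partial_{z_1z_2}\vp(z_0,0,0)=\partial_{z_1}\partial_{z_2}^2\vp(z_0,0,0)=0$, i.e.\ $\mathcal{B}(\partial_{z_2}^2\vp)(z_0,0,0)=0$. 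Your proposal never confronts this flux.

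Nor can it be avoided by your ``tangential estimates plus the equation, patched by a partition of unity'' reduction. Near the corner the only direction tangential to both edges is $\partial_{z_0}$, and the single equation only yields the combination $r_{11}\partial_{z_1}^2+2r_{12}\partial_{z_1z_2}+r_{22}\partial_{z_2}^2$ of spatial second derivatives; to extract, say, $\partial_{z_2}^4\vp$ or $\partial_{z_1}^2\partial_{z_2}^2\vp$ you must already control $D\partial_{z_2}^3\vp$ (or $D\partial_{z_1}^3\vp$), i.e.\ run an energy estimate for a family normal to one edge, and then the non-vanishing flux above reappears on that edge. A cutoff cannot separate the two edges near the corner, so the overlap you defer with ``I expect \dots is possible precisely because $(\romannumeral3)$ and $(\romannumeral4)$'' is exactly where the proof has to be carried out; the required cancellations are specific computations at the corner point and at the boundary $\{z_2=0\}$, not consequences that follow formally from the stated assumptions, and they are not supplied by the proposal.
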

\textbf{Proof of Proposition \ref{well-posedness}.}

 Combining lemma \ref{H2 solvability} and lemma \ref{higher regularity}, one can easily prove Proposition \ref{well-posedness}. In fact, by lemma \ref{H2 solvability} and lemma \ref{higher regularity}, it is easy to see that when $\delta<\delta_*\defs\min(\delta_1,\delta_2)$, problem $\eqref{linear problem}$ admits a smooth solution in $H^4(\Omega_T)$ and it satisfies the estimate given in proposition \ref{well-posedness} for $\eta\ge \eta_0\defs\max(\eta_1,\eta_2)$, where $(\delta_1,\eta_1)$ and $(\delta_2,\eta_2)$ are the constants obtained in lemma \ref{H2 solvability} and lemma \ref{higher regularity}, respectively.
 
  Hence, in order to prove Proposition \ref{well-posedness}, it suffices to show that lemma \ref{H2 solvability} and lemma \ref{higher regularity} hold. In this section, we will give a proof to lemma \ref{H2 solvability} and the proof of lemma \ref{higher regularity} is postponed to section \ref{sec:4}.
\subsection{The proof of Lemma \ref{H2 solvability}}
 
It is difficult to solve problem \eqref{linear problem} in the cornered-space domain $\Omega$ directly, so we introduce an extended problem first. Precisely, one extends $r_{20}$, $r_{02}$, $r_{12}$, $r_{21}$, and $b_2$ oddly with respect to $\{z_2=0\}$. Taking $r_{02}$ for example, we define
\begin{equation}
Er_{02}:=\left\{\!\!
\begin{array}{ll}
r_{02}(z_0,z_1,z_2), \mbox{ \!\ \ \ \ \ when } z_2>0,\\
-r_{02}(z_0,z_1,-z_2),\mbox{ when } z_2<0.
\end{array}
\right.
\end{equation}
Other coefficients and the right hand side term $f$ is extended evenly with respect to $\{z_2=0\}$. Thanks to assumptions $(\romannumeral2)$ and $(\romannumeral3)$, all the extended coefficients are still in $W^{1,\infty}(\tilde{\Omega}_T)$, where $\tilde{\Omega}_T:=[0,T]\times\mathbb{R}_+\times\mathbb{R}$.
For the notational simplicity, we omit the $``E"$ in all extended functions, the extended vertical boundary is still denoted by $\Gamma_1$ and the initial space domain in still denoted by $\Gamma_0$.
We try to obtain a solution to \eqref{linear problem} by solving the following initial boundary value problem:
\begin{equation}\label{extended linear problem}
\begin{cases}
\mathcal{L}\varphi=f, &\mbox{in\ } \tilde{\Omega}_T,\\
\mathcal{B}\varphi=0,&\mbox{on \ }\Gamma_1,\\
\varphi(0,z_1,z_2)=0,&\mbox{on \ }\Gamma_0,\\
\partial_{z_0}\varphi(0,z_1,z_2)=0,&\mbox{on \ }\Gamma_0.
\end{cases}
\end{equation}
\begin{proof} 
The proof of lemma \ref{H2 solvability} is divided into three steps. In the first two steps, we establish the energy estimate of the solutions to \eqref{extended linear problem} up to the second order. Then in the third step, we investigate a regularized problem associated to \eqref{extended linear problem} by mollifying its coefficients via the convolution with respect to $z_2$ (since the extended coefficients are non-smooth only in the $z_2$ direction) with the one dimensional classical Friedrichs mollifier $\rho_{\epsilon}$ and derives its uniform-in-$\epsilon$ estimate up to the second order. Then by applying the result in \cite{IKAWA1968} (or \cite{IKAWA1970}) to the regularized problem, one derives a unique solution $\vp^\epsilon$ to the regularized problem for each $\epsilon>0$. Owing to the uniform-in-$\epsilon$ second order estimate of $\vp^\epsilon$, one deduces an $H^2(\Omega_T)$-solution to the extended problem \eqref{extended linear problem} by taking the limit $\epsilon\rightarrow 0^+$ in the regularized problem and it still satisfies the second order estimate. Then by the properties of the extended coefficients and the uniqueness of the solution (the uniqueness is guaranteed by the second order energy inequality), one can show that the unique solution to the extended linear problem \eqref{extended linear problem} is actually a solution to the linear problem \eqref{linear problem}.

\vskip 0.2cm
\textbf{Step 1}. 
In this step, we will deduce the first order energy estimate of the solution to problem \eqref{extended linear problem}.
Multiplying $2e^{-2\eta z_0}\partial_{z_0}\varphi$ on both sides of equation in
$\eqref{extended linear problem}$, we have
\begin{align}
    2e^{-2\eta z_0}\mathcal{L}\varphi\mathcal{Q}\varphi=&\partial_i(e^{-2\eta z_0}r_{ij}\partial_j
    \varphi \partial_{0}\varphi)+\partial_{j}(e^{-2\eta z_0}r_{ij}\partial_i\varphi \partial_0\varphi)+e^{-2\eta z_0}P(D\varphi)\nonumber\\
    &-\partial_{0}(e^{-2\eta z_0}r_{ij}\partial_i\varphi\partial_j\varphi)
    +2\eta e^{-2\eta z_0}(2r_{0j}\partial_j\varphi \partial_0\varphi-r_{ij}\partial_i\varphi\partial_j\varphi),\nonumber
\end{align}
where $P(D\varphi)$ is a quadratic polynomial with respect to $D\varphi$. Clearly we have $P(D\varphi)\le C|D\varphi|^2$.
Integrating the identity over $\tilde{\Omega}_T$ with $z:=(z_0,z_1,z_2)$, we obtain
\begin{align}
    \int_{\tilde{\Omega}_T}\!\!2e^{-2\eta z_0}\mathcal{L}\varphi\mathcal{Q}\varphi dz\!=&\!\left[\int_{\tilde{\Omega}}e^{-2\eta z_0}H_0dz_1dz_2\right]_{z_0=0}^{z_0=T}-\int_0^T\!\!\!\int_{\mathbb{R}}e^{-2\eta z_0}H_1|_{z_1=0}dz_2dz_0\nonumber\\
    &+\int_{\tilde{\Omega}_T}\!\!e^{-2\eta z_0}P(D\varphi)dz+2\eta\int_{\tilde{\Omega}_T} e^{-2\eta z_0}H_0dz,
\end{align}
where for $\bm{\xi}=(\xi_0,\xi_1,
\xi_2)\in\mathbb{R}^3$, we define
\begin{align}
    H_0(\bm{\xi})&\defs 2\sum_{i,k=0}^2r_{i0}\xi_i Q_k\xi_k-Q_0\sum_{i,j=0}^2r_{ij}\xi_i\xi_j,\label{defn H0}\\
     H_1(\bm{\xi})&\defs 2\sum_{i,k=0}^2r_{i1}\xi_i Q_k\xi_k-Q_1\sum_{i,j=0}^2r_{ij}\xi_i\xi_j.\label{defn H1}
\end{align}
At the background state, one has
\begin{align}
    H_0(D\vp)&=2\partial_0\varphi \sum_{k=0}^2Q_k\partial_k\varphi-Q_0(r_{11}|\partial_1\varphi|^2+r_{22}|\partial_2\varphi|^2+|\partial_0\varphi|^2)\nonumber\\
    &=(D\varphi) \mathbf{M} (D\varphi)^\top,\label{quadratic of H0}
\end{align}
where $$\mathbf{M}=\begin{pmatrix}
    Q_0& Q_1& Q_2\\
    Q_1&-r_{11}Q_0&0\\
    Q_2&0&-r_{22}Q_0
\end{pmatrix}.$$
Select $(Q_0,Q_1,Q_2)$ properly such that
\begin{equation}\label{determinant cond}
\left\{\!\!\!\!\!\!
   \begin{array}{ll}
     &Q_0>0,\\
     &-Q_0^2r_{11}-Q_1^2>0,\\
     &r_{22}Q_1^2Q_0+Q_0^3r_{11}r_{22}+r_{11}Q_0Q_2^2>0,
\end{array}
\right.
\end{equation}
\emph{i.e.}, such that $\mathbf{M}$ is positive definite.
In view of assumption $(\romannumeral1)$, we just need to let
\begin{equation}\label{Q0 selection}
    \left\{\!\!
    \begin{array}{ll}
    Q_0>0,\vspace{0.5mm}\\
    Q_0^2>\dfrac{Q_1^2}{-r_{11}},\vspace{2mm}\\
    Q_0^2>\dfrac{-Q_1^2r_{22}-Q_2^2r_{11}}{r_{11}r_{22}}.
\end{array}
    \right.
\end{equation}
It is easy to see from \eqref{Q0 selection} that $Q_1$ and $Q_2$ can be arbitrary.
Then $H_0\ge C_1 |D\varphi|^2$ for some positive constant $C_1$. 
 On the other hand, it is easy to see $H_0\le C_2|D\varphi|^2$ due to assumption $(\romannumeral1)$. By assumption $(\romannumeral4)$, on the vertical boundary $\Gamma_1$, we have
 \begin{align}
    H_1&=2\sum_{k=0}^2Q_k\partial_k\varphi(r_{11}\partial_1\varphi+r_{12}\partial_2\varphi)-Q_1\sum_{i,j=0}^2r_{ij}\partial_i\varphi\partial_j\varphi\nonumber\\
    &=2\sum_{k=0}^2Q_k\partial_k\varphi (\frac{r_{11}}{b_1}\mathcal{B}\varphi)-Q_1\sum_{i,j=0}^2r_{ij}\partial_i\varphi\partial_j\varphi\nonumber\\
    &=0.
\end{align}
In the last equality above, we used the condition $\mathcal{B}\varphi=0\mbox{\ on\ }\Gamma_1$ and set $Q_1=0$. 
Hence we have
\begin{align}
    &\eta\int_{\tilde{\Omega}_T}e^{-2\eta z_0}|D\varphi|^2dz+\int_{\tilde{\Omega}}e^{-2\eta T}|D\varphi|^2dz_1dz_2\nonumber\\
    &\le C\int_{\tilde{\Omega}_T}e^{-2\eta z_0}((q\eta +1)|D\varphi|^2+\frac{1}{q \eta}|\mathcal{L}\varphi|^2)dz.
\end{align}
Let $q=1/(2C)$ and $\eta\ge 4$, then we obtain
\begin{align}\label{first order estimate: 2nd step}
    &\eta\int_{\tilde{\Omega}_T}e^{-2\eta z_0}|D\varphi|^2dz+\int_{\tilde{\Omega}}e^{-2\eta T}|D\varphi|^2dz_1dz_2
    \le C\frac{1}{ \eta}\int_{\tilde{\Omega}_T}e^{-2\eta z_0}|\mathcal{L}\varphi|^2dz.
\end{align}
\vskip 0.3cm
\textbf{Step 2.} 
In this step, we will establish the second order estimate of the solutions based on the first order estimate derived in step 1. 
Clearly, $\partial_{z_0}\varphi$ satisfies
\begin{equation}\label{extended partialz0}
\begin{cases}
\mathcal{L}(\partial_{z_0}\varphi)=-[\partial_{z_0},\mathcal{L}]\varphi+\partial_{z_0}f, &\mbox{in\ } \tilde{\Omega}_T,\\
\mathcal{B}(\partial_{z_0}\varphi)=0,&\mbox{on \ }\Gamma_1,\\
\partial_{z_0}\varphi(z_0,z_1,z_2)=0,&\mbox{on \ }\Gamma_0,\\
\partial^2_{z_0}\varphi(z_0,z_1,z_2)=F|_{z_0=0},&\mbox{on \ }\Gamma_0.
\end{cases}
\end{equation}
where $$
F=f-\sum_{(i,j)\neq (0,0)}r_{ij}\partial_{ij}\varphi
-\sum_{i=0}^2r_i\partial_i\varphi=f.$$
It is easy to see that $\|F|_{z_0=0}\|^2_{L^2(\tilde{\Omega})}=
\|f|_{z_0=0}\|_{L^2(\tilde{\Omega})}$.
By the same argument as done in the first step, we deduce that $\partial_{z_0}\vp$ satisfies 
\begin{align}\label{first order estimate of partialz0}
    &\eta\|e^{-\eta z_0}D\partial_{z_0}\varphi\|^2_{L^2(\tilde{\Omega}_T)}+e^{-2\eta T}\|D\partial_{z_0}\vp(T,\cdot)\|^2_{L^2(\tilde{\Omega})}\nonumber\\
    &\qquad\le C(\frac{1}{\eta}\|\mathcal{L}(\partial_{z_0}\varphi)\|^2_{L^2(\tilde{\Omega}_T)}+\|f|_{z_0=0}\|^2_{L^2(\tilde{\Omega})}).
\end{align}

Then we proceed to estimate $\partial_{z_2}\vp$.
It is clear that $\partial_{z_2}\vp$ satisfies
\begin{equation}\label{extended partialz2}
\begin{cases}
\mathcal{L}(\partial_{z_2}\varphi)=-[\partial_{z_2},\mathcal{L}]\varphi+\partial_{z_2}f, &\mbox{in\ } \tilde{\Omega}_T,\\
\mathcal{B}(\partial_{z_2}\varphi)=-[\partial_{z_2},\mathcal{B}]\vp,&\mbox{on \ }\Gamma_1,\\
\partial_{z_2}\varphi=0,
&\mbox{on \ }\Gamma_0,\\
\partial_{z_0}(\partial_{z_2}\varphi)=0
&,\mbox{on \ }\Gamma_0.
\end{cases}
\end{equation}

Multiplying $2e^{-2\eta z_0}\partial_{z_0z_2}\vp$ on both sides of $\eqref{extended partialz2}_1$ and integrating by parts over $\Omega_T$, we have
\begin{align}
    \int_{\tilde{\Omega}_T}^{-2\eta z_0}\mathcal{L}(\partial_{z_2}\vp)\partial_{z_0}\partial_{z_2}\vp dz=&-2\int_0^T\!\!\!\int_{\mathbb{R}}e^{-2\eta z_0}\partial_{z_0z_2}\vp \frac{r_{11}}{b_1}\mathcal{B}(\partial_{z_2}\vp) dz_2dz_0|_{z_1=0}\nonumber\\
    &+2\eta\int_{\tilde{\Omega}_T} e^{-2\eta z_0}H_0dz+\left[\int_{\tilde{\Omega}}e^{-2\eta t}H_0dz_1dz_2\right]_{z_0=0}^{z_0=T}\nonumber\\
    &+\int_{\tilde{\Omega}_T}e^{-2\eta z_0}P_1(D\partial_{z_2}\vp)dz\label{eq:integration by parts 2nd order},
\end{align}
where $H_0=|\partial_{z_0}\partial_{z_2}\vp|^2-r_{11}|\partial_{z_1}\partial_{z_2}\vp|^2-r_{22}|\partial_{z_2}\partial_{z_2}\vp|^2$
and $P_1$ is a new quadratic polynomial with respect to $D\partial_{z_2}\vp$. Since the coefficients are in $W^{1,\infty}(\tilde{\Omega}_T)$, it is easy to see that $|P_1(D\partial_{z_2}\vp)|\le C|D\partial_{z_2}\vp|$.

In order to complete the estimate, we need to deal with the boundary term carefully. Firstly, with the help of the boundary condition $\mathcal{B}\vp=0$ on $\Gamma_1$, one has $\mathcal{B}\partial_{z_2}\vp=-(\partial_{z_2}b_1\partial_{z_1}+\partial_{z_2}b_2\partial_{z_2})\vp$. 
Then for $i=1,2$, by assumptions $(\romannumeral1)$, $(\romannumeral3)$, and $(\romannumeral4)$, and the Gauss theorem
\begin{align}
  &2\int_0^T\!\!\int_{\mathbb{R}}e^{-2\eta z_0}(\frac{r_{11}}{b_1}\partial_{z_2}b_i\partial_{z_i}\vp\cdot\partial_{z_0z_2}\vp)|_{z_1=0}dz_2dz_0\nonumber\\
  &\quad= -2\int_{\tilde{\Omega}_T} \partial_{z_1}(e^{-2\eta z_0}\frac{r_{11}}{b_1}\partial_{z_2}b_i\partial_{z_i}\vp\partial_{z_0z_2}\vp )dz\nonumber\\
  &\quad=-2\int_{\tilde{\Omega}_T}\!\!e^{-2\eta z_0}(\partial_{z_1}(\frac{r_{11}}{b_1}\partial_{z_2}b_i)\partial_{z_i}\vp\partial_{z_0z_2}\vp+\frac{r_{11}}{b_1}\partial_{z_2}b_i(\partial_{z_1z_i}\vp\partial_{z_0z_2}\vp+\partial_{z_i}\vp\partial_{z_0z_1z_2}\vp))\nonumber\\
  &\quad\lesssim \int_0^T e^{-2\eta z_0}(\|\partial_{z_0}\vp\|^2_{H^1(\tilde{\Omega})}+\|\vp\|^2_{H^2(\tilde{\Omega})})dz_0+\int_{\tilde{\Omega}_T}e^{-2\eta z_0}\frac{r_{11}}{b_1}\partial_{z_2}b_i\partial_{z_i}\vp\partial_{z_0z_1z_2}\vp dz\nonumber\\
  &\quad\le   C\int_0^T \!\!e^{-2\eta z_0}(\|\partial_{z_0}\vp\|^2_{H^1(\tilde{\Omega})}+\|\vp\|^2_{H^2(\tilde{\Omega})})dz_0+\!\int_{\tilde{\Omega}_T}\!\!\partial_{z_0}(e^{-2\eta z_0}\frac{r_{22}}{b_1}\partial_{z_2}b_i\partial_{z_i}\vp\partial_{z_1z_2}\vp)dz\nonumber\\
  &\quad\quad+2\eta \int_{\tilde{\Omega}_T}e^{-2\eta z_0}\frac{r_{22}}{b_1}\partial_{z_2}b_i\partial_{z_i}\vp\partial_{z_1z_2}\vp dz-\int_{\tilde{\Omega}_T}e^{-2\eta z_0}\frac{r_{22}}{b_1}\partial_{z_2}b_i\partial_{z_0z_i}\vp\partial_{z_1z_2}\vp dz\nonumber\\
  &\quad\quad-\int_{\tilde{\Omega}_T}e^{-2\eta z_0}\partial_{z_0}(\frac{r_{22}}{b_1}\partial_{z_2}b_i)\partial_{z_i}\vp\partial_{z_1z_2}\vp dz\nonumber\\
  &\quad\lesssim \int_0^T e^{-2\eta z_0}(\|\partial_{z_0}\vp\|^2_{H^1(\tilde{\Omega})}+\|\vp\|^2_{H^2(\tilde{\Omega})})dz_0\nonumber\\
  &\quad\quad+\delta (\eta \int_0^T e^{-2\eta z_0}\|\vp\|^2_{H^2(\tilde{\Omega})} dz_0+ e^{-2\eta T}\|\vp|_{z_0=T}\|^2_{H^2(\tilde{\Omega})}).\label{eq:estimate of boundary term 2nd order}
\end{align}
It follows from assumptions $(\romannumeral1)$ and $(\romannumeral4)$ that 
$H_0\ge C|D\partial_{z_2}\vp|^2$.
Then in view of $\eqref{eq:integration by parts 2nd order}$ and $\eqref{eq:estimate of boundary term 2nd order}$, by the Cauchy inequality, we deduce that
\begin{align}
    \eta\int_{\tilde{\Omega}_T}&e^{-2\eta z_0}|D\partial_{z_2}\vp|^2dz+e^{-2\eta T}\int_{\tilde{\Omega}}|D\partial_{z_2}\vp|_{z_0=T}|^2dz_1dz_2\nonumber\\
    &\le C\left(\int_{\tilde{\Omega}_T}e^{-2\eta z_0}((\epsilon\eta+1)|D\partial_{z_2}\vp|^2+\frac{1}{\epsilon\eta}|\mathcal{L}\partial_{z_2}\vp|^2)dz+\|\vp_0\|^2_{H^2(\tilde{\Omega})}+\|\vp_1\|^2_{H^1(\tilde{\Omega})}\right.\nonumber\\
    &\quad+\int_0^T e^{-2\eta z_0}(\|\partial_{z_0}\vp\|^2_{H^1(\tilde{\Omega})}+\|\vp\|^2_{H^2(\tilde{\Omega})})dz_0\nonumber\\
  &\quad+\delta (\eta \int_0^T e^{-2\eta z_0}\|\vp\|^2_{H^2(\tilde{\Omega})} dz_0+ e^{-2\eta T}\|\vp|_{z_0=T}\|^2_{H^2(\tilde{\Omega})}).\label{eq:estimate of Dpartial_z2_phi}
\end{align}
Finally, it follows from the second order equation $\eqref{extended linear problem}_1$ that
\[
\partial_{z_1z_1}\vp=\frac{1}{r_{11}}\left(\mathcal{L}\vp-\sum_{(i,j)\neq(1,1)}r_{ij}\partial_{ij}\vp\right).
\]
Therefore, by \eqref{first order estimate: 2nd step}, \eqref{first order estimate of partialz0} and \eqref{eq:estimate of Dpartial_z2_phi},
\begin{align}
  &\eta\int_{\tilde{\Omega}_T}e^{-2\eta z_0}|\partial_{z_1z_1}\vp|^2dz+e^{-2\eta T}\int_{\tilde{\Omega}}|\partial_{z_1z_1}\vp|_{z_0=T}|^2dz_1dz_2 \nonumber\\
  &\lesssim \sum_{i=0,2}\left(\eta\int_{\tilde{\Omega}_T}e^{-2\eta z_0}|D\partial_{z_i}\vp|^2dz+e^{-2\eta T}\int_{\tilde{\Omega}}|D\partial_{z_i}\vp|_{z_0=T}|^2dz_1dz_2\right)\nonumber\\
  &\quad+ \eta\int_{\tilde{\Omega}_T}e^{-2\eta z_0}|D\vp|^2dz+e^{-2\eta T}\int_{\tilde{\Omega}}|D\vp|_{z_0=T}|^2dz_1dz_2\nonumber\\
  &\quad+\eta\int_{\tilde{\Omega}_T}e^{-2\eta z_0}|\mathcal{L}\vp|^2dz+e^{-2\eta T}\int_{\tilde{\Omega}}|\mathcal{L}\vp|_{z_0=T}|^2dz_1dz_2\nonumber\\
  &\lesssim (\frac{1}{\eta}\|e^{-\eta z_0}\mathcal{L}\varphi\|^2_{L^2(\tilde{\Omega}_T)}+\frac{1}{\eta}\|e^{-\eta z_0}\mathcal{L}(\partial_{z_0}\varphi)\|^2_{L^2(\tilde{\Omega}_T)}+\|f|_{z_0=0}\|^2_{L^2(\tilde{\Omega})})\nonumber\\
  &\quad+\int_{\tilde{\Omega}_T}e^{-2\eta z_0}((q\eta+1)|D\partial_{z_2}\vp|^2+\frac{1}{q\eta}|\mathcal{L}\partial_{z_2}\vp|^2)dz\nonumber\\
    &\quad+\int_0^T e^{-2\eta z_0}(\|\partial_{z_0}\vp\|^2_{H^1(\tilde{\Omega})}+\|\vp\|^2_{H^2(\tilde{\Omega})})dz_0+\delta \eta \int_0^T e^{-2\eta z_0}\|\vp\|^2_{H^2(\tilde{\Omega})} dz_0\nonumber\\
     &\quad+\delta e^{-2\eta T}\|\vp|_{z_0=T}\|^2_{H^2(\tilde{\Omega})}+\eta\int_{\tilde{\Omega}_T}\!\!e^{-2\eta z_0}|\mathcal{L}\vp|^2dz+e^{-2\eta T\!\!}\int_{\tilde{\Omega}}|\mathcal{L}\vp|_{z_0=T}|^2dz_1dz_2.\label{eq:estimate of partial_z1z1}
\end{align}

In order to control the last two terms in \eqref{eq:estimate of partial_z1z1}, by conducting integration by parts with respect to $z_0$ to the integral $\int_{\tilde{\Omega}_T}e^{-2\eta z_0}|v|^2dz$, we introduce the following inequality
\begin{align}
    &\eta\int_{\tilde{\Omega}_T}e^{-2\eta z_0}|v|^2dz+e^{-2\eta T}\int_{\tilde{\Omega}}|v|_{z_0=T}|^2dz_1dz_2\nonumber\\
    &\quad\le \frac{1}{\eta}\int_{\tilde{\Omega}_T}e^{-2\eta z_0}|\partial_{z_0}v|^2dz+\|v|_{z_0=0}\|^2_{L^2(\tilde{\Omega})}.\label{eq:low norm bounded by higher norm}
\end{align}
Therefore,
\begin{align}
    &\eta\int_{\tilde{\Omega}_T}e^{-2\eta z_0}|\mathcal{L}\vp|^2dz+e^{-2\eta T}\int_{\tilde{\Omega}}|\mathcal{L}\vp|_{z_0=T}|^2dz_1dz_2\nonumber\\
    &\le\frac{1}{\eta}\int_{\tilde{\Omega}_T}e^{-2\eta z_0}|\partial_{z_0}(\mathcal{L}\vp)|^2dz+\|f|_{z_0=0}\|^2_{L^2(\tilde{\Omega})}\nonumber\\
    &\lesssim \frac{1}{\eta}\int_{\tilde{\Omega}_T}e^{-2\eta z_0}|\mathcal{L}(\partial_{z_0}\vp)|^2dz+\frac{1}{\eta}\sum_{|\alpha|\le 2}\|e^{-\eta z_0}D^\alpha \vp\|^2_{L^2(\tilde{\Omega}_T)}+\|f|_{z_0=0}\|^2_{L^2(\tilde{\Omega})}.\label{eq:estimate of L(vp)}
\end{align}
Then \eqref{eq:estimate of partial_z1z1} and \eqref{eq:estimate of L(vp)} imply
\begin{align}
   &\eta\int_{\tilde{\Omega}_T}e^{-2\eta z_0}|\partial_{z_1z_1}\vp|^2dz+e^{-2\eta T}\int_{\tilde{\Omega}}|\partial_{z_1z_1}\vp|_{z_0=T}|^2dz_1dz_2 \nonumber\\
   &\lesssim \frac{1}{\eta}\int_{\tilde{\Omega}_T}e^{-2\eta z_0}\sum_{|\alpha|\le 1}|\mathcal{L}(D^\alpha\vp)|^2 dz+\int_{\tilde{\Omega}_T}e^{-2\eta z_0}((q\eta+1)|D\partial_{z_2}\vp|^2+\frac{1}{q\eta}|\mathcal{L}\partial_{z_2}\vp|^2)dz\nonumber\\
    &\quad+\int_0^T e^{-2\eta z_0}(C\delta\|\vp\|^2_{H^2(\tilde{\Omega})}+\|\partial_{z_0}\vp\|^2_{H^1(\tilde{\Omega})})dz_0
    +\frac{1}{\eta}\sum_{|\alpha|\le 2}\|e^{-\eta z_0}D^\alpha \vp\|^2_{L^2(\tilde{\Omega}_T)}\nonumber\\
    &\quad+\|f|_{z_0=0}\|^2_{L^2(\tilde{\Omega})}.\label{eq:estimate of partial_z1z1_vp final}
\end{align}
Since $D\partial_{z_0}\vp$, $D\partial_{z_2}\vp$ and $\partial_{z_1z_1}\vp$ cover all second order derivatives, by adding \eqref{first order estimate: 2nd step}, \eqref{eq:estimate of Dpartial_z2_phi} and \eqref{eq:estimate of partial_z1z1_vp final} together, letting $q>0$, $\delta>0$ and $\frac{1}{\eta}$ be properly small, we deduce
\begin{align}
	 &\sum_{|\alpha|\le 2}\eta\|e^{-\eta z_0}D^\alpha\vp\|^2_{L^2(\tilde{\Omega}_T)}+e^{-2\eta T}\|D^\alpha\vp|_{z_0=T}\|^2_{L^2(\tilde{\Omega})}\nonumber\\
	&\quad\lesssim \frac{1}{\eta}\sum_{|\alpha|\le 1}\|e^{-\eta z_0}\mathcal{L}(D^\alpha \vp)\|^2_{L^2(\tilde{\Omega}_T)}+\|\mathcal{L}\vp|_{z_0=0}\|^2_{L^2(\tilde{\Omega})}.\label{eq:second order estimate_first}
\end{align}
Since the coefficients of $\mathcal{L}$ are bounded, by \eqref{eq:second order estimate_first}, it holds for $\eta$ large enough that
\begin{align}
    &\sum_{|\alpha|\le 2}\eta\|e^{-\eta z_0}D^\alpha\vp\|^2_{L^2(\tilde{\Omega}_T)}+e^{-2\eta T}\|D^\alpha \vp|_{z_0=T}\|^2_{L^2(\tilde{\Omega})}\nonumber\\
    &\lesssim \frac{1}{\eta}\sum_{|\alpha|\le 1}\|e^{-\eta z_0}D^\alpha f\|^2_{L^2(\tilde{\Omega}_T)}+\|f|_{z_0=0}\|^2_{L^2(\tilde{\Omega})}
.\label{eq:second order estimate}
\end{align}
\vskip 1em
\textbf{Step 3.}
In this step, we will apply Theorem 1 in \cite{IKAWA1968} (or Theorem 1 in \cite{IKAWA1970}) to derive the existence of an $H^2(\tilde{\Omega}_T)$-solution to the extended problem, then by the property of the extended coefficients, one shows that the solution is indeed a solution to problem \eqref{linear problem}.
In order to apply  \cite[Theorem 1]{IKAWA1968} (or \cite[Theorem 1]{IKAWA1970}), we consider a regularized problem associated to \eqref{extended linear problem}.
For $\epsilon>0$, let $\rho_{\epsilon}$ be the one dimensional Friedrichs mollifier, i.e.,
$\rho_\epsilon(s)=\frac{1}{\epsilon}\eta\left(\frac{s}{\epsilon}\right)$, where
\begin{equation}
\eta(s)=
\left\{
\begin{array}{ll}
\!\!K\cdot\exp(-\frac{1}{1-|s|^2}),{\ }|s|<1,\\
\!\!0,{\ } |s|\ge 1,
\end{array}
\right.
\end{equation}
such that $\int_{\mathbb{R}}\eta(s)ds=1$.
Define $\mathcal{L}^\epsilon$ and $\mathcal{B}^\epsilon$ as
\begin{align}
	\mathcal{L}^\epsilon\defs \sum_{i,j=0}^2\tilde{r}_{ij}^\epsilon\partial_{ij},\quad
	\mathcal{B}^\epsilon\defs\sum_{i=1}^2\tilde{b}_i^\epsilon\partial_i,
\end{align}
where
\[
\tilde{r}_{ij}^\epsilon(z_0,z_1,z_2)=\left(\frac{r_{ij}}{r_{11}}\right)^\epsilon(z_0,z_1,z_2)\defs\int_\mathbb{R}\left(\frac{r_{ij}}{r_{11}}\right)(z_0,z_1,s)\rho_{\epsilon}(z_2-s)ds)
\]
and
\[
\tilde{b}_i^\epsilon(z_1,z_2)=\left(\frac{b_{i}}{b_{1}}\right)^\epsilon(z_1,z_2)\defs\int_\mathbb{R}\left(\frac{b_{i}}{b_1}\right)(z_1,s)\rho_{\epsilon}(z_2-s)ds.
\]
Before going on, we present following lemma, which gives the properties of the coefficients of the regularized problem.
\begin{lem}\label{lem:3.1}
Under assumptions $(\romannumeral1)$-$(\romannumeral4)$, we have:
\begin{enumerate}[label=(\arabic*).]
\item $\tilde{r}_{12}^\epsilon$, $\tilde{r}_{21}^\epsilon$, $\tilde{r}_{02}^\epsilon$,  $\tilde{r}_{20}^\epsilon$ and $\tilde{b}_2^\epsilon(0,z_2)$ are odd functions with respect to $z_2$, and the other coefficients are even functions with respect to $z_2$.
\item There exists a positive contant $C$, independent on $\epsilon$, such that $$\left\|\tilde{r}_{ij}^\epsilon\right\|_{L^\infty(\tilde{\Omega}_T)}\le C\delta+\left|\frac{\bar{r}_{ij}}{\bar{r}_{11}}\right|,\quad\left\|D\tilde{r}_{ij}^\epsilon\right\|_{L^\infty(\tilde{\Omega}_T)}\le C\delta,$$
and
$$
\quad\left\|\tilde{b}_i^\epsilon\right\|_{W^{1,\infty}(\tilde{\Omega})}\le C\delta,\quad \left\|\partial_{z_2}^2\tilde{b}_2^\epsilon(0,\cdot)\right\|_{L^\infty}\le C\delta.$$

\item $\tilde{b}_2^\epsilon(0,0)=0$ and $\partial_{z_2}\tilde{b}_2^\epsilon(0,0)=0$.
\item $\mathcal{B}^\epsilon$ is co-normal to $\mathcal{L}^\epsilon$, i.e.,
 $$\left(\frac{r_{10}}{r_{11}}\right)^\epsilon|_{z_1=0}=0\quad\mbox{and }\quad\left(\frac{r_{12}}{r_{11}}\right)^\epsilon|_{z_1=0}=\left(\frac{b_{2}}{b_{1}}\right)^\epsilon|_{z_1=0}.$$
\end{enumerate}
\end{lem}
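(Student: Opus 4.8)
The plan is to verify $(1)$--$(4)$ one at a time; each is a direct consequence of elementary properties of the one--dimensional Friedrichs mollifier together with assumptions $(\romannumeral1)$--$(\romannumeral4)$, and the only structural point to keep in mind throughout is that $\rho_\epsilon$ acts by convolution in $z_2$ \emph{alone}, so it commutes with $\partial_{z_0}$, with $\partial_{z_1}$, and with restriction to $\{z_1=0\}$, and it preserves parity in $z_2$ because $\rho_\epsilon$ is even.

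For $(1)$ I would note that, after the odd/even extension, the quotients $r_{20}/r_{11}$, $r_{02}/r_{11}$, $r_{12}/r_{11}$, $r_{21}/r_{11}$ and $b_2/b_1$ are odd functions of $z_2$ (odd numerator over even denominator) while every other quotient entering $\mathcal L^\epsilon$, $\mathcal B^\epsilon$ is even; since convolution in $z_2$ with the even kernel $\rho_\epsilon$ preserves parity, $(1)$ follows. For $(4)$, assumption $(\romannumeral4)$ gives the pointwise identities $(r_{10}/r_{11})(z_0,0,z_2)=0$ and $(r_{12}/r_{11})(z_0,0,z_2)=(b_2/b_1)(0,z_2)$ on $\Gamma_1$ (after extension, for all $z_2\in\mathbb R$); mollifying both sides in $z_2$ and using that the convolution commutes with fixing $z_1=0$ and is linear yields $\tilde r_{10}^\epsilon|_{z_1=0}=0$ and $\tilde r_{12}^\epsilon|_{z_1=0}=\tilde b_2^\epsilon|_{z_1=0}$, i.e. the co-normality of $\mathcal B^\epsilon$ relative to $\mathcal L^\epsilon$. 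For $(3)$, the first identity $\tilde b_2^\epsilon(0,0)=0$ is immediate since $z_2\mapsto\tilde b_2^\epsilon(0,z_2)$ is odd by $(1)$; for the second, one commutes the mollification with one $z_2$--derivative, writes $\partial_{z_2}\tilde b_2^\epsilon(0,z_2)=\big(\partial_{z_2}(b_2/b_1)(0,\cdot)\ast\rho_\epsilon\big)(z_2)$, and uses that $\partial_{z_2}(b_2/b_1)(0,\cdot)$ vanishes at the origin --- a consequence of $b_2(0,0)=\partial_{z_2}b_2(0,0)=0$ from assumption $(\romannumeral3)$ together with $b_1$ bounded away from zero --- so that, after symmetrising the integrand by the parity of $(1)$, the value at $z_2=0$ vanishes.

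For $(2)$, which is the only assertion needing any care, the two basic inequalities $\|g\ast\rho_\epsilon\|_{L^\infty}\le\|g\|_{L^\infty}$ and $\partial(g\ast\rho_\epsilon)=(\partial g)\ast\rho_\epsilon$ (the latter for the distributional derivative, valid because the extended coefficients lie in $W^{1,\infty}(\tilde\Omega_T)$) reduce everything to bounding, before mollification, $\|r_{ij}/r_{11}\|_{W^{1,\infty}}$, $\|b_2/b_1\|_{W^{1,\infty}}$ and $\|\partial_{z_2}^2(b_2/b_1)(0,\cdot)\|_{L^\infty}$ (note $b_1/b_1\equiv1$, so $\tilde b_1^\epsilon\equiv1$). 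The point is that for $\delta$ small one has $|r_{11}|\ge\tfrac12|\bar r_{11}|$ and $|b_1|\ge c>0$ --- using the Sobolev embedding $H^{s_0}(\Omega)\hookrightarrow W^{1,\infty}(\Omega)$, valid since $s_0\ge3$, to turn the $L^2$--smallness of $r_{ij}-\bar r_{ij}$ and its derivatives in assumption $(\romannumeral2)$ into $L^\infty$--smallness --- so that division is harmless; writing $r_{ij}/r_{11}=\bar r_{ij}/\bar r_{11}+(\text{terms that are }O(\delta)\text{ in }W^{1,\infty})$, using $\bar r_{ij}=0$ for $i\ne j$, and likewise estimating $b_2/b_1$ and its $z_2$--derivatives from the bounds $|\partial_{z_2}^{\ell+1}b_1|+|\partial_{z_2}^{\ell}b_2|\le C\delta$ $(\ell=0,1,2)$, one obtains precisely the stated bounds, the non-small constant $|\bar r_{ij}/\bar r_{11}|$ entering only through the background part of the diagonal quotients. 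The lemma is thus a routine verification; the mild obstacle is exactly this propagation of $W^{1,\infty}$--smallness through the quotients by $r_{11}$ and $b_1$, which is why one must shrink $\delta$ so that $r_{11}$ and $b_1$ stay bounded away from zero.
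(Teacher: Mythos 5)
Your treatment of parts (1), (2) and (4) is correct and follows essentially the paper's own route: part (1) is the change-of-variables/parity computation for the $z_2$-convolution with the even kernel; part (2) is obtained by splitting off the background quotient $\bar r_{ij}/\bar r_{11}$ and estimating $r_{ij}/r_{11}-\bar r_{ij}/\bar r_{11}$, $b_2/b_1$ and $\partial_{z_2}^2(b_2/b_1)(0,\cdot)$ in $L^\infty$ \emph{before} mollifying, using that $r_{11}$ and $b_1$ stay away from zero for small $\delta$ (you make the Sobolev step $H^{s_0}\hookrightarrow W^{1,\infty}$ and the normalization $\tilde b_1^\epsilon\equiv 1$ explicit, which the paper leaves implicit); part (4) is exactly the observation that mollification in $z_2$ commutes with restriction to $\{z_1=0\}$, applied to the identities supplied by assumption (iv).

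The genuine gap is in the second identity of (3). With $g(s):=(b_2/b_1)(0,s)$ denoting the oddly extended trace, your argument computes $\partial_{z_2}\tilde b_2^\epsilon(0,0)=(g'\ast\rho_\epsilon)(0)=\int_{\mathbb R}g'(s)\rho_\epsilon(s)\,ds$, and here $g'$ is \emph{even} (the derivative of an odd function) while $\rho_\epsilon$ is even, so ``symmetrising the integrand'' produces $2\int_0^\infty g'(s)\rho_\epsilon(s)\,ds$ and yields no cancellation at all; the pointwise vanishing $g'(0)=0$ only gives $|\partial_{z_2}\tilde b_2^\epsilon(0,0)|\le\sup_{|s|\le\epsilon}|g'(s)|$, i.e.\ smallness of order $\epsilon$ (or $C\delta$), not the exact vanishing asserted in the lemma. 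Parity alone certainly cannot carry the claim: $g(s)=s$ is odd, yet $(g\ast\rho_\epsilon)'(0)=1$. So, as written, your proof of (3) establishes only an approximate version of the stated identity. The paper argues this point by a different device — a chain of reflections and integrations by parts in the undifferentiated integral $\int g(\tau)\,\partial_{z_2}\rho_\epsilon(z_2-\tau)\,d\tau$ at $z_2=0$, designed to show the quantity equals its own negative — and your step is not a substitute for that cancellation. To close the gap you must either reproduce an argument that genuinely exploits the odd extension at the level of that integral, or verify that the weaker bound $|\partial_{z_2}\tilde b_2^\epsilon(0,0)|\le C\delta$, which your method does deliver, suffices wherever the exact vanishing is invoked downstream; as it stands, the proposal proves less than the lemma states.
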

With this lemma, it is not difficult to show lemma \ref{H2 solvability} and the proof of this lemma is delayed to the end of this subsection. 

Now we consider
\begin{equation}\label{regularized extended linear problem}
\begin{cases}
\mathcal{L}^\epsilon\varphi^\epsilon=f^\epsilon,&\mbox{in\ } \tilde{\Omega}_T,\\
\mathcal{B}^\epsilon\varphi^\epsilon=0,&\mbox{on \ }\Gamma_1,\\
\varphi^\epsilon(0,z_1,z_2)=0,&\mbox{on \ }\Gamma_0,\\
\partial_{z_0}\varphi^\epsilon(0,z_1,z_2)=0,&\mbox{on \ }\Gamma_0,
\end{cases}
\end{equation}
where
$$
f^\epsilon(z_0,z_1,z_2)=\left(\frac{f}{r_{11}}\right)^\epsilon(z_0,z_1,z_2)\defs\int_{\mathbb{R}}\left(\frac{f}{r_{11}}\right)(z_0,z_1,s)\rho_\epsilon(z_2-s) ds.$$

Armed with lemma \ref{lem:3.1},
one can immediately obtain the uniform-in-$\epsilon$ second order estimate of $\vp^\epsilon$ by repeating the process in the first two steps. In fact, one has
\begin{align}
&\sum_{|\alpha|\le 2}\eta\|e^{-\eta z_0}D^\alpha\vp^\epsilon\|^2_{L^2(\tilde{\Omega}_T)}+e^{-2\eta T}\|D^\alpha\vp^\epsilon|_{z_0=T}\|^2_{L^2(\tilde{\Omega})}\nonumber\\
&\le C \frac{1}{\eta}\sum_{|\alpha|\le 1}\|e^{-\eta z_0}D^
\alpha f^\epsilon\|^2_{L^2(\tilde{\Omega}_T)}+\|f^\epsilon|_{z_0=0}\|^2_{L^2(\tilde{\Omega})}.
\end{align}
Clearly one has
\begin{align}
	&\left\|\left(\frac{\mathcal{F}}{r_{11}}\right)^\epsilon(z_0,\cdot)\right\|^2_{L^2(\tilde{\Omega})}\nonumber\\
	&\quad=\int_{\tilde{\Omega}}\left|\left(\frac{\mathcal{F}}{r_{11}}\right)^\epsilon(z_0,z_1,z_2)\right|^2 dz_1dz_2\nonumber\\
	&\quad=\int_{\tilde{\Omega}}\left|\int_{\mathbb{R}}\frac{\mathcal{F}}{r_{11}}(z_0,z_1,s)\rho_{\epsilon}(z_2-s)ds\right|^2 dz_1dz_2\nonumber\\
	&\quad\le \int_{\tilde{\Omega}}\left(\int_{\mathbb{R}}\left|\frac{\mathcal{F}}{r_{11}}\right|^2(z_0,z_1,s)\rho_{\epsilon}(z_2-s)ds\right)\left(\int_{\mathbb{R}}\rho_{\epsilon}(z_2-s)ds\right) dz_1dz_2\nonumber\\
	&\quad\le \left\|\frac{1}{r_{11}}\right\|_{L^\infty(\tilde{\Omega}_T)}^2\cdot\int_{\tilde{\Omega}}\left[\left(\int_{\mathbb{R}}\rho_{\epsilon}(z_2-s)dz_2\right)|\mathcal{F}|^2(z_0,z_1,s)dz_1\right]ds\nonumber\\
	&\quad\le C\int_{\tilde{\Omega}}|\mathcal{F}|^2(z_0,z_1,s)dz_1ds=C\|\mathcal{F}(z_0,\cdot)\|^2_{L^2(\tilde{\Omega})},
\end{align}
where the constant $C$ depends on $\rho_0$ and $\gamma$, but not on $\epsilon$. Similarly one has 
\begin{align}
	\left\|D\left(\frac{\mathcal{F}}{r_{11}}\right)^\epsilon(z_0,\cdot)\right\|_{L^2(\tilde{\Omega})}\le C\left(\left\|\mathcal{F}(z_0,\cdot)\right\|^2_{L^2(\tilde{\Omega})}+\|D\mathcal{F}(z_0,\cdot)\|^2_{L^2(\tilde{\Omega})}\right).
\end{align}
Combining the above estimates and lemma \ref{lem:3.1}, we deduce that
\begin{align}
&\sum_{|\alpha|\le 2}\eta\|e^{-\eta z_0}D^\alpha\vp^\epsilon\|^2_{L^2(\tilde{\Omega}_T)}+e^{-2\eta T}\|D^\alpha \vp^\epsilon|_{z_0=T}\|^2_{L^2(\tilde{\Omega})}\nonumber\\
&\quad\le C \frac{1}{\eta}\sum_{|\alpha|\le 1}\|e^{-\eta z_0}D^
\alpha f\|^2_{L^2(\tilde{\Omega}_T)}+\|f|_{z_0=0}\|^2_{L^2(\tilde{\Omega})}.\label{eq3.37}
\end{align}
By \cite[Theorem 1]{IKAWA1968} (or \cite[Theorem 1]{IKAWA1970}), lemma \ref{lem:3.1}, and inequality \eqref{eq3.37}, one concludes that for each $\epsilon>0$, there exists a solution $\vp^\epsilon$ to problem \eqref{regularized extended linear problem} satisfying the uniform estimate \eqref{eq3.37}. Hence there exists a subsequence $\{\vp^{\epsilon_j}\}_{\epsilon_j> 0}$ converging to a function $\vp$ weakly in $H^2(\tilde{\Omega}_T)$. By lemma \ref{lem:3.1} and the uniform estimate \eqref{eq3.37}, one can pass the limit $\epsilon\rightarrow 0^+$ in problem \eqref{regularized extended linear problem}, which implies $\vp$ solves problem \eqref{extended linear problem} in the weak sense and it also satisfies estimate \eqref{eq3.37}. By our extension, it is not difficult to see that $\vp(z_0,z_1,-z_2)$ is also a solution to problem \eqref{extended linear problem}.
It follows from the uniqueness of the extended problem \eqref{extended linear problem} that 
\[
\vp(z_0,z_1,z_2)=\vp(z_0,z_1,-z_2)
\] 
for all $(z_0,z_1,z_2)\in \tilde{\Omega}_T$.
Differentiating on both sides of the above identity then letting $z_2=0$, one deduces that $\partial_{z_2}\vp(z_0.z_1,0)=0$. This reveals that $\vp$ is indeed a solution to problem \eqref{linear problem}.
\end{proof}
\textbf{Proof of lemma \ref{lem:3.1}.}
\begin{proof}
$(1)$ They are true due to the constructions of $\tilde{r}_{ij}^\epsilon$'s and $\tilde{b}_i^\epsilon$ and the property of the mollifier. For example, for $\tilde{r}_{12}^\epsilon$, one has
\begin{align}
	\tilde{r}_{12}^\epsilon(z_0,z_1,-z_2)&=\int_{\mathbb{R}}\frac{r_{12}}{r_{11}}(z_0,z_1,-z_2-\tau)\rho_{\epsilon}(\tau)d\tau\nonumber\\
	&=-\int_{\mathbb{R}}\frac{r_{12}}{r_{11}}(z_0,z_1,z_2+\tau)\rho_{\epsilon}(-\tau)d\tau\nonumber\\
	&=-\int_{\mathbb{R}}\frac{r_{12}}{r_{11}}(z_0,z_1,s)\rho_{\epsilon}(z_2-s)ds\nonumber\\
	&=-\tilde{r}_{12}^\epsilon(z_0,z_1,z_2),
\end{align}
where in the second equality, we have used the oddness of $r_{12}$, and the evenness of  $r_{11}$ and $\rho_{\epsilon}$, and in the last equality, the changing of variable is used. The properties of the other coefficients can be derived by similar arguments. Since the argument is similar and standard, we omit the details here.
\vskip 0.2cm
$(2)$ By assumption $(\romannumeral2)$, properties $(1)$ from the modified coefficients, and the properties of the mollifier, one has
\begin{align*}
\|\tilde{r}_{ij}^\epsilon\|_{L^\infty(\tilde{\Omega}_T)}=\left\|\left(\frac{r_{ij}}{r_{11}}\right)^\epsilon\right\|_{L^\infty(\tilde{\Omega}_T)}\!\!&\le \left\|\left(\frac{r_{ij}}{r_{11}}\right)^\epsilon-\frac{\bar{r}_{ij}}{\bar{r}_{11}}\right\|_{L^\infty(\tilde{\Omega}_T)}+\left|\frac{\bar{r}_{ij}}{\bar{r}_{11}}\right|\nonumber\\
&= \left\|\left(\frac{r_{ij}}{r_{11}}-\frac{\bar{r}_{ij}}{\bar{r}_{11}}\right)^\epsilon\right\|_{L^\infty(\tilde{\Omega}_T)}+\left|\frac{\bar{r}_{ij}}{\bar{r}_{11}}\right|.
\end{align*}
It is clear that
\begin{align}
\left\|\left(\frac{r_{ij}}{r_{11}}-\frac{\bar{r}_{ij}}{\bar{r}_{11}}\right)^\epsilon\right\|_{L^\infty(\tilde{\Omega}_T)}&\le \left\|\frac{r_{ij}}{r_{11}}-\frac{\bar{r}_{ij}}{\bar{r}_{11}}\right\|_{L^\infty(\Omega_T)}\nonumber\\
&=\left\|\frac{(r_{ij}-\bar{r}_{ij})\bar{r}_{11}-\bar{r}_{ij}(r_{11}-\bar{r}_{11})}{r_{11}\bar{r}_{11}}\right\|_{L^\infty(\Omega_T)}\nonumber\\
&\le \left\|\frac{(r_{ij}-\bar{r}_{ij})}{r_{11}}\right\|_{L^\infty(\Omega_T)}+\left|\frac{\bar{r}_{ij}}{\bar{r}_{11}}\right|\cdot\left\|\frac{(r_{11}-\bar{r}_{11})}{r_{11}}\right\|_{L^\infty(\Omega_T)}\nonumber\\
&\le C\delta.\label{3.27}
\end{align}
We also have
\begin{align}
\left\|D\tilde{r}_{ij}^\epsilon\right\|_{L^\infty(\tilde{\Omega}_T)}=\left\|\left(D\frac{r_{ij}}{r_{11}}\right)^\epsilon\right\|_{L^\infty(\tilde{\Omega}_T)}&=\left\|\left(D\frac{r_{ij}}{r_{11}}\right)^\epsilon\right\|_{L^\infty(\Omega_T)}\le\left\|D\left(\frac{r_{ij}}{r_{11}}\right)\right\|_{L^\infty(\Omega_T)}.\nonumber
\end{align}
It is easy to see that
\begin{align}
\left\|D\left(\frac{r_{ij}}{r_{11}}\right)\right\|_{L^\infty(\Omega_T)}
&=\left\|\frac{1}{r_{11}}D r_{ij}-\frac{r_{ij}}{r_{11}^2}D r_{11}\right\|_{L^\infty(\Omega_T)}\nonumber\\
&\le\left\|\frac{1}{r_{11}}\right\|_{L^\infty(\Omega_T)}\!\!\!\!\!\cdot\left\|r_{ij}-\bar{r}_{ij}\right\|_{W^{1,\infty}(\Omega_T)}\nonumber\\
&\quad+\left\|\frac{r_{ij}}{r_{11}^2}\right\|_{L^\infty(\Omega_T)}\!\!\!\!\!\cdot\|r_{11}-\bar{r}_{11}\|_{W^{1,\infty}(\Omega_T)}\nonumber\\
&\le C\delta.\label{3.29}
\end{align}

Similarly, one can deduce that 
$$\left\|\left(\frac{b_i}{b_1}\right)^\epsilon\right\|_{W^{1,\infty}(\tilde{\Omega}_T)}\le C\delta.$$ 
In fact, since $b_2(0,0)=\partial_{z_2}^2b_2(0,0)=0$, the twice differentiable function of $b_2(0,z_2)$ is bounded at $z_2=0$. Hence $|\partial_{z_2}b_2^\epsilon(0,z_2)|\le C\delta$ for all $z_2$.

We remark that the positive constant $C$ in the above inequalities only depends on $\rho_0$ and $\gamma$, but is independent on $\epsilon$.
\vskip 0.2cm	
$(3)$ By the definition of $\tilde{b}_2^\epsilon$, the oddness of $\dfrac{b_2}{b_1}$ and the eveness of $\rho_{\epsilon}$, one has
\begin{align}
	\tilde{b}_2^\epsilon(0,0)&=\int_{\mathbb{R}}\left(\frac{b_2}{b_1}\right)(0,\tau)\rho_{\epsilon}(-\tau)d\tau\nonumber\\
	&=-\int_{\mathbb{R}}\left(\frac{b_2}{b_1}\right)(0,-\tau)\rho_{\epsilon}(\tau)d\tau\nonumber\\
    &=-\int_{\mathbb{R}}\left(\frac{b_2}{b_1}\right)(0,z_2-\tau)\rho_{\epsilon}(\tau)d\tau\Big|_{z_2=0}\nonumber\\
    &=-\tilde{b}_2^\epsilon(0,0),
\end{align}
which implies $\tilde{b}_2^\epsilon(0,0)=0$.
Then 
\begin{align}
	\partial_{z_2}\tilde{b}_2^\epsilon(0,0)&=\frac{1}{\epsilon}\int_{\mathbb{R}}\left(\frac{b_2}{b_1}\right)(0,\tau)\frac{\partial\eta}{\partial z_2}\left(\frac{z_2-\tau}{\epsilon}\right)d\tau\Big|_{z_2=0}\nonumber\\
	&=-\frac{1}{\epsilon}\int_{\mathbb{R}}\left(\frac{b_2}{b_1}\right)(0,\tau)\frac{\partial \eta}{\partial z_2}\left(\frac{\tau-z_2}{\epsilon}\right)d\tau\Big|_{z_2=0}\nonumber\\
	&=\frac{1}{\epsilon}\int_{\mathbb{R}}\left(\frac{b_2}{b_1}\right)(0,\tau)\frac{\partial \eta}{\partial \tau}\left(\frac{\tau-z_2}{\epsilon}\right)d\tau\Big|_{z_2=0}\nonumber\\
	&=-\frac{1}{\epsilon}\int_{\mathbb{R}}\frac{\partial }{\partial \tau}\left(\frac{b_2}{b_1}\right)(0,\tau)\eta\left(\frac{\tau-z_2}{\epsilon}\right)d\tau\Big|_{z_2=0}\nonumber\\
	&=-\frac{1}{\epsilon}\int_{\mathbb{R}}\frac{\partial}{\partial \tau}\left(\frac{b_2}{b_1}\right)(0,\tau)\eta\left(\frac{z_2-\tau}{\epsilon}\right)d\tau\Big|_{z_2=0}\nonumber\\
	&=\frac{1}{\epsilon}\int_{\mathbb{R}}\left(\frac{b_2}{b_1}\right)(0,\tau)\frac{\partial \eta}{\partial \tau}\left(\frac{z_2-\tau}{\epsilon}\right)d\tau\Big|_{z_2=0}\nonumber\\
	&=-\frac{1}{\epsilon}\int_{\mathbb{R}}\left(\frac{b_2}{b_1}\right)(0,\tau)\frac{\partial \eta}{\partial z_2}\left(\frac{z_2-\tau}{\epsilon}\right)d\tau\Big|_{z_2=0}\nonumber\\
	&=-\partial_{z_2}\tilde{b}_2^\epsilon(0,0).
\end{align}
Hence we deduce that $\partial_{z_2}\tilde{b}_2^\epsilon(0,0)=0$.
\vskip 0.2cm
$(4)$ This is an easy consequence of the following fact by lemma \ref{lem: 2.1}:
$$\left(\frac{r_{10}}{r_{11}}\right)|_{z_1=0}=0\quad\mbox{and }\quad\left(\frac{r_{12}}{r_{11}}\right)|_{z_1=0}=\left(\frac{b_{2}}{b_{1}}\right)|_{z_1=0}.$$
\end{proof}
\section{The linearized problem (\Rmnum{2}): Higher order estimates}\label{sec:4}
It is clear that the $H^2(\Omega_T)$-solution of the linearized problem obtained in section \ref{sec:3} is not sufficient to yield a smooth solution to the nonlinear problem by nonlinear iteration method. Hence, in order to deduce the existence of smooth solutions of the nonlinear problem, one has to establish higher order the \emph{a priori} estimate of the solution of the linearized problem derived in section \ref{sec:3}. However, since the regularity of the coefficients of the equation in the extended domain is not sufficient to establish higher order \textit{a priori} estimates. One has to establish higher order estimate in the cornered-space domain directly. But due to the violation of the linear stability conditions of the boundary operator and the presence of the corner singularity, it is difficult to derive higher order \emph{a priori} estimate of the solutions, in particular, the estimate of the boundary terms. Based on the observation that the boundary operators are co-normal and the vanishing properties of the coefficients on the boundaries (see lemma \ref{lem: 2.1} for details), one expresses the boundary terms in terms of some commutators, which reduces the order of the derivatives contained in the boundary terms. Then by the Gauss theorem and the trace theorem, the estimates of boundary terms of the highest order derivatives of the solution can be established. 

In this section, we give a proof to lemma \ref{higher regularity} by establishing  the estimates of the third and fourth order. Due to the corner singularity, the third and the fourth order estimates cannot be derived by same manner, which is similar to the situation in \cite{FXX,FHXX}. Hence they will be deduced separately in the next two subsections. 
\subsection{Third order estimate of the solution}
In this subsection, we establish the third order estimate of the solution obtained in lemma \ref{H2 solvability}. Since the boundary of the space domain is not smooth, it is difficult for us to find multipliers such that the boundary terms on both sides of the corner point have good sign, which is different from the initial boundary value problems on smooth domains. By the properties of the boundary operator $\mathcal{B}$ and the coefficients of the equation (see lemma \ref{lem: 2.1}), we can find suitable multipliers such that the boundary terms can be expressed as some commutators, so that the order of the derivative of the solution is reduced. Then the boundary terms can be estimated by control the commutators, which can be done by using the Gauss theorem and integrating by parts with respect to the time derivative. The third order estimate is summarized as the following lemma: 
\begin{lem}\label{third order regularity}
There exists $\delta_3>0$ such that if assumptions $(\romannumeral1)-(\romannumeral4)$ hold for $\delta\le \delta_3$, then there exists a constant $\eta_3>1$ such that for any $T>0$ and $\eta\ge \eta_3$, the $H^2(\Omega_T)$ solution of problem $\eqref{linear problem}$ satisfies
    \begin{align}
    &\sum_{|\alpha|= 3}\eta  \|e^{-\eta z_0}D^
    \alpha \vp\|^2_{L^2(\Omega_T)}+e^{-2\eta T}\|D^
    \alpha \vp(T,\cdot)\|^2_{L^2(\Omega)}\nonumber\\
    &\lesssim \frac{1}{\eta}\sum_{|\alpha|\le 2}\|e^{-\eta z_0}\mathcal{L}(D^\alpha\vp)\|^2_{L^2(\Omega_T)}
    +\|e^{-\eta z_0}f\|^2_{H^{2}(\Omega_T)}
    +\|f|_{t=0}\|^2_{H^1(\Omega)}.\label{energy estimate in lemma 4.2}
\end{align}
\end{lem}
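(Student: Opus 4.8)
The plan is to establish \eqref{energy estimate in lemma 4.2} by controlling the ten third-order derivatives $D^\alpha\vp$ ($|\alpha|=3$) in three groups: the six containing at least one $\partial_{z_0}$; the pair $\partial_{z_1}^3\vp,\ \partial_{z_1}^2\partial_{z_2}\vp$; and the pair $\partial_{z_1}\partial_{z_2}^2\vp,\ \partial_{z_2}^3\vp$. The first group is cheap: $w=\partial_{z_0}\vp$ again solves a problem of the type \eqref{linear problem}, since $\mathcal L$ is unchanged (so $(\romannumeral1)$--$(\romannumeral4)$ hold with the same constants), $\mathcal B$ commutes with $\partial_{z_0}$ and $\partial_{z_0}$ is tangential to $\Gamma_2$ (so the boundary conditions stay homogeneous), the source becomes $\partial_{z_0}f-[\partial_{z_0},\mathcal L]\vp$ whose weighted $H^1(\Omega_T)$-norm is $\lesssim\|e^{-\eta z_0}f\|_{H^2(\Omega_T)}+\delta\|e^{-\eta z_0}\vp\|_{H^3(\Omega_T)}$ and whose trace at $z_0=0$ equals $\partial_{z_0}f|_{z_0=0}$ (because $r_{00}\equiv1$ makes $[\partial_{z_0},\mathcal L]\vp$ vanish there), and $w$ has homogeneous initial data. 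Re-running the energy argument of Lemma~\ref{H2 solvability} for $w$ then gives the weighted $H^2$-bound of $\partial_{z_0}\vp$, i.e.\ the bound for all six of these derivatives, the $\delta\|\vp\|_{H^3}$ contribution being absorbed at the end.

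For $\partial_{z_1}^3\vp,\ \partial_{z_1}^2\partial_{z_2}\vp$ I would use the equation: since $r_{11}$ stays close to $\bar r_{11}<0$, $\partial_{z_1}^2\vp=r_{11}^{-1}\big(f-\sum_{(i,j)\ne(1,1)}r_{ij}\partial_{ij}\vp\big)$, and differentiating this identity once more in $z_1$ or $z_2$ expresses these two derivatives in terms of the first group, the last pair $\partial_{z_1}\partial_{z_2}^2\vp,\ \partial_{z_2}^3\vp$, the quantities $\mathcal L(D^\beta\vp)$ with $|\beta|\le1$, first derivatives of $f$, and lower-order terms. The $\eta$-weighted norms of the $f$-derivative terms that appear are handled as in the proof of Lemma~\ref{H2 solvability}: one applies \eqref{eq:low norm bounded by higher norm} with $v=\partial_{z_k}f$ and rewrites $\partial_{z_0}\partial_{z_k}f$ through the equation as $\mathcal L(\partial_{z_0}\partial_{z_k}\vp)$ plus commutators, reproducing a $\tfrac1\eta\|\mathcal L(D^\gamma\vp)\|^2$ term with $|\gamma|=2$ of the type present on the right of \eqref{energy estimate in lemma 4.2}.

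The core of the argument is the last pair, which I would obtain from the weighted energy estimate with multiplier $2e^{-2\eta z_0}\partial_{z_0}w$ applied to $w:=\partial_{z_2}^2\vp$. The point of choosing $w=\partial_{z_2}^2\vp$ (rather than, say, $\partial_{z_1}\partial_{z_2}\vp$) is that both of its outer derivatives are tangential to $\Gamma_1$, so on $\Gamma_1$ the condition it satisfies is only the commutator $\mathcal Bw=-[\partial_{z_2}^2,\mathcal B]\vp$, which by assumption $(\romannumeral2)$ has $O(\delta)$ coefficients and involves no derivative of $\vp$ of order higher than two; differentiating in the normal direction $\partial_{z_1}$ would instead put the uncontrolled trace of a third-order normal derivative into the boundary term --- precisely where the failure of the linear stability condition would block the estimate. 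With $w$ carrying homogeneous initial data, the interior terms are treated as in Steps 1--2 of the proof of Lemma~\ref{H2 solvability}, and $Dw=(\partial_{z_0}\partial_{z_2}^2\vp,\ \partial_{z_1}\partial_{z_2}^2\vp,\ \partial_{z_2}^3\vp)$ yields the two missing derivatives once the two boundary integrals are estimated.

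Estimating these boundary integrals is the main obstacle. On $\Gamma_1$, co-normality (Lemma~\ref{lem: 2.1}: $r_{12}/r_{11}=b_2/b_1$ and $r_{10}=0$) together with the choice $Q_1=0$ reduces the integrand to $\propto e^{-2\eta z_0}\tfrac{r_{11}}{b_1}(\partial_{z_0}w)\,\mathcal Bw$, a third-order derivative against the small, at-most-second-order commutator $\mathcal Bw$; turning this into a volume integral over $\Omega_T$ by the Gauss theorem in $z_1$ and then integrating by parts in $z_0$ to shed the excess time derivative --- exactly as in \eqref{eq:estimate of boundary term 2nd order} --- leaves only $\delta\eta$ times third-order norms (absorbable), $\delta e^{-2\eta T}\big(\|\vp(T,\cdot)\|_{H^2}^2+\|\partial_{z_1}\partial_{z_2}^2\vp(T,\cdot)\|^2\big)$ (absorbable or already controlled), and first-group norms. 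The most delicate point is the $\Gamma_2$ integral $\propto\int_{\Gamma_2}e^{-2\eta z_0}r_{22}(\partial_{z_0}\partial_{z_2}^2\vp)(\partial_{z_2}^3\vp)|_{z_2=0}$, where the interplay between the corner-compatible vanishing $r_{02}=r_{12}=0$ on $\Gamma_2$ (Lemma~\ref{lem: 2.1}$(\romannumeral1)$) and the co-normal structure is essential: since all tangential derivatives of $\partial_{z_2}\vp$ vanish on $\Gamma_2$, restricting the once-$z_2$-differentiated equation to $\Gamma_2$ gives $\partial_{z_2}^3\vp|_{\Gamma_2}=r_{22}^{-1}\big(\partial_{z_2}f-2(\partial_{z_2}r_{01})\partial_{z_0}\partial_{z_1}\vp-(\partial_{z_2}r_{11})\partial_{z_1}^2\vp-(\partial_{z_2}r_{22})\partial_{z_2}^2\vp\big)|_{\Gamma_2}$ --- first order in $f$, second order in $\vp$, with $O(\delta)$ coefficients on the $\vp$-terms --- and likewise $\partial_{z_0}\partial_{z_2}^2\vp|_{\Gamma_2}=\partial_{z_0}(\partial_{z_2}^2\vp|_{\Gamma_2})$ with $\partial_{z_2}^2\vp|_{\Gamma_2}$ an expression in $f$ and second-order derivatives of $\vp$ built from $f$ and $\partial_{z_0}$-derivatives; after these substitutions the $\Gamma_2$ contribution is disposed of by integrating by parts in $z_0$ along $\Gamma_2$ (the $z_0=0$ slice vanishing by the homogeneous data), the Gauss theorem, and the trace theorem, the higher-order remainders entering only through $O(\delta)$ coefficients and hence being absorbed once the first-group bounds are also used. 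Summing the three groups and choosing $\delta$ small and then $\eta\ge\eta_3$ large absorbs the $\delta\eta$-, $\tfrac1\eta$- and $\delta e^{-2\eta T}$-terms into the left-hand side, which gives \eqref{energy estimate in lemma 4.2}.
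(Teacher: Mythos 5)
Your proposal is correct and follows essentially the same route as the paper's proof: the time-tangential group via the $H^2$ estimate applied to $\partial_{z_0}\vp$, the core energy estimate for $\partial_{z_2}^2\vp$ with the multiplier $2e^{-2\eta z_0}\partial_{z_0}\partial_{z_2}^2\vp$ (using co-normality to write the $\Gamma_1$ term as the commutator $[\partial_{z_2}^2,\mathcal B]\vp$ and the vanishing of $r_{02},r_{12}$ on $\Gamma_2$ to write $\partial_{z_2}^3\vp|_{z_2=0}=r_{22}^{-1}\mathcal L(\partial_{z_2}\vp)$, then Gauss's theorem plus integration by parts in $z_0$), and the remaining pure-$z_1$ derivatives recovered from the equation. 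The only cosmetic difference is that you expand the $\Gamma_2$ trace identity into its explicit commutator form, whereas the paper keeps it as $r_{22}^{-1}\mathcal L(\partial_{z_2}\vp)$; the absorption at the end also uses a small Cauchy parameter $q$ in addition to small $\delta$ and large $\eta$, as in the paper.
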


\begin{proof}
 
Since $\partial_{z_0}$ is tangential to both boundaries $\{z_1=0\}$ and $\{z_2=0\}$, one can apply \eqref{eq:second order estimate_first} to $\partial_{z_0}\vp$ to obtain that
\begin{align}
&\sum_{|\alpha|\le 2}\eta\|e^{-\eta z_0}D^\alpha\partial_{z_0}\vp\|^2_{L^2(\tilde{\Omega}_T)}+e^{-2\eta T}\|D^\alpha\partial_{z_0} \vp|_{z_0=T}\|^2_{L^2(\tilde{\Omega})}\nonumber\\
&\quad\lesssim \frac{1}{\eta}\sum_{|\alpha|\le 2}\|e^{-\eta z_0}\mathcal{L}(D^\alpha \partial_{z_0}\vp)\|^2_{L^2(\tilde{\Omega}_T)}+\|\mathcal{L} (\partial_{z_0}\vp)|_{z_0=0}\|^2_{L^2(\tilde{\Omega})}.\label{eq:3.38}
\end{align}
Next, we will consider the first order estimate of $\partial_{z_2}^2\vp$. It is clear that $\partial_{z_2}^2\vp$ satisfies 
\begin{equation}\label{eq:IBVP of partial_z2z2}
\begin{cases}
\mathcal{L}(\partial_{z_2z_2}\vp)=-[\partial_{z_2z_2},\mathcal{L}]\vp+\partial_{z_2z_2}f, &\mbox{in\ } \tilde{\Omega}_T,\\
\mathcal{B}(\partial_{z_2z_2}\varphi)=-[\partial_{z_2z_2},\mathcal{B}]\vp,&\mbox{on \ }\Gamma_1,\\
\partial_{z_2}^2\varphi=0,&\mbox{on \ }\Gamma_0\\
\partial_{z_0}(\partial_{z_2}^2\varphi)=0,&\mbox{on \ }\Gamma_0.
\end{cases}
\end{equation}
Via the equation and $\partial_{z_2}\vp|_{z_2=0}=0$, we can further derive the boundary condition for $\partial_{z_2z_2}\vp$ on $\{z_2=0\}$ as 
\[
\partial_{z_2}(\partial_{z_2z_2}\vp)=\frac{1}{r_{22}}(\mathcal{L}-r_{00}\partial_{z_0}^2-2r_{02}\partial_{z_0}\partial_{z_2}-2r_{12}\partial_{z_1}\partial_{z_2}-r_{11}\partial_{z_1}^2)\partial_{z_2}\vp.
\]
As required in assumption $(\romannumeral3)$, we have $r_{12}=r_{02}$ on $\{z_2=0\}$. Moreover, $(\partial_{z_0}^2\partial_{z_2}\vp,\partial_{z_0z_1z_2}\vp,\partial_{z_1}^2\partial_{z_2}\vp,\partial_{z_0z_2}\vp,\partial_{z_1z_2}\vp)$ vanishes on $\{z_2=0\}$, since $\partial_{z_2}\vp|_{z_2=0}=0$. As a result, we obtain
\begin{equation}\label{eq:bdry condition for p_z2z2_vp}
    \partial^3_{z_2}\vp=\frac{1}{r_{22}}\mathcal{L}(\partial_{z_2}\vp)\mbox{\ on\ }\{z_2=0\}.
\end{equation}
Multiplying $2e^{-2\eta z_0}\partial_{z_0}\partial^2_{z_2}\vp$ on both sides of $\eqref{eq:IBVP of partial_z2z2}$, and integrating by parts over $\Omega_T$, one has
\begin{align}
    2\int_{\Omega_T}\!\!e^{-2\eta z_0}\mathcal{L}(\partial_{z_2}^2\varphi)\partial_{z_0}\partial_{z_2}^2\varphi dz\!=&\!\left[\int_{\Omega}e^{-2\eta z_0}H_0dz_1dz_2\right]_{z_0=0}^{z_0=T}-\int_0^T\!\!\!\int_{\mathbb{R}^+}e^{-2\eta z_0}H_1|_{z_1=0}dz_2dz_0\nonumber\\
    &+\int_{\Omega_T}\!\!e^{-2\eta z_0}P_2(D\varphi)dz+2\eta\int_{\Omega_T} e^{-2\eta z_0}H_0dz\nonumber\\
    &-2\int_0^T r_{22}e^{-2\eta z_0}\partial_{z_0}\partial_{z_2}^2\vp\partial_{z_2}^3\vp|_{z_2=0} dz_1dz_0\label{eq: 1st step of 3rd order estimate}
\end{align}
where
\begin{align}
 H_0&=|\partial_{z_0}\partial_{z_2}^2\vp|^2-r_{11}|\partial_{z_1}\partial_{z_2}^2\vp|^2-r_{22}|\partial_{z_2}^3\vp|^2,\nonumber\\
H_1&=2((r_{11}\partial_{z_1}+r_{12}\partial_{z_2})\partial_{z_2}^2\vp)\partial_{z_0}\partial_{z_2}^2\vp,
\end{align}
and $P_2(D\partial^2_{z_2}\vp)$ is a quadratic polynomial in $D\partial^2_{z_2}\vp$ with bounded coefficients.
By assumptions $(\romannumeral1)$ and $(\romannumeral2)$, it is easy to see that $H_0\ge C |D\partial_{z_2}^2\vp|^2$ for some positive constant $C$.
Hence we deduce that
\begin{align}
    \eta &\int_{\Omega_T}\!\!e^{-2\eta z_0}|D\partial_{z_2}^2\vp|^2dz+e^{-2\eta T}\int_\Omega |D\partial_{z_2}^2\vp|_{z_0=T}|^2dz_1z_2\nonumber\\
    &\lesssim \int_{\Omega_T}e^{-2\eta z_0}(q\eta +1)|D\partial_{z_2}^2\vp|^2+\frac{1}{q\eta}|\mathcal{L}(\partial_{z_2}^2\vp|^2)dz\nonumber\\
    &\quad+\int_0^T\!\!\!\int_{\mathbb{R}^+}\!\!e^{-2\eta z_0}H_1|_{z_1=0}dz_2dz_0+\int_0^T r_{22}e^{-2\eta z_0}\partial_{z_0}\partial_{z_2}^2\vp\partial_{z_2}^3\vp|_{z_2=0} dz_1dz_0.\label{eq:1st order estimate of partial_z2z2_vp, firts step}
\end{align}
Next, we are forced to control the boundary terms on the right hand-side of the above inequality.
Employing assumption $(\romannumeral4)$ and the boundary condition that $\mathcal{B}\vp=0$ on $\{z_1=0\}$, one arrives at
\begin{align}
        -H_1|_{z_1=0}&=-2r_{11}((\partial_{z_1}+\frac{r_{12}}{r_{11}}\partial_{z_2})\partial_{z_2}^2\vp)\partial_{z_0}\partial^2_{z_2}\vp\nonumber\\
        &=-\frac{2r_{11}}{b_1}(b_1\partial_{z_1}\partial_{z_2}^2\vp+b_2\partial_{z_2}\partial_{z_2}^2\vp)\partial_{z_0}\partial_{z_2}^2\vp\nonumber\\
        &=\frac{2r_{11}}{b_1}([\partial^2_{z_2},\mathcal{B}]\vp)\partial_{z_0}\partial_{z_2}^2\vp\nonumber\\
        &=\frac{2r_{11}}{b_1}\partial_{z_0}\partial_{z_2}^2\vp(2(\partial_{z_2}b_1)\partial_{z_1z_2}\vp+2(\partial_{z_2}b_2)\partial^2_{z_2}\vp&
        \nonumber\\
        &\quad+(\partial_{z_2}^2b_1)\partial_{z_1}\vp+(\partial_{z_2}^2b_2)\partial_{z_2}\vp).\label{eq:commutator -H_1}
\end{align}
Therefore, by the Gauss theorem we have
\begin{align}
&\int_0^T\!\!\!\int_{\mathbb{R}^+}e^{-2\eta z_0}H_1|_{z_1=0}dz_2dz_0\nonumber\\
&=-\int_{\Omega_T}\partial_{z_1}( e^{-2\eta z_0}\frac{2r_{22}}{b_1}([\partial_{z_2}^2,\mathcal{B}]\vp)\partial_{z_0}\partial_{z_2}^2\vp)dz\nonumber\\
&=-\int_{\Omega_T}e^{-2\eta z_0}(\partial_{z_1}\left(\frac{2r_{22}}{b_1}\right)([\partial_{z_2}^2,\mathcal{B}]\vp)\partial_{z_0}\partial_{z_2}^2\vp)+\frac{r_{22}}{b_1}\partial_{z_1}([\partial_{z_2}^2,\mathcal{B}]\vp)\partial_{z_0}\partial_{z_2}^2\vp dz\nonumber\\
&\quad -\int_{\Omega_T}e^{-2\eta z_0}\frac{r_{22}}{b_1}([\partial_{z_2}^2,\mathcal{B}]\vp)\partial_{z_0z_1}\partial_{z_2}^2\vp dz\nonumber\\
&\le C\delta\int_0^T\!\! e^{-2\eta z_0}\|\partial_{z_0}\vp(z_0,\cdot)\|^2_{H^2(\Omega)}+\|\vp(z_0,\cdot)\|^2_{H^3(\Omega)}dz_0\nonumber\\
&\quad-\!\int_{\Omega_T}\!\!e^{-2\eta z_0}\frac{r_{22}}{b_1}([\partial_{z_2}^2,\mathcal{B}]\vp)\partial_{z_0z_1}\partial_{z_2}^2\vp dz\nonumber\\
&\le  C\delta\int_0^T\!\! e^{-2\eta z_0}( \|\vp(z_0,\cdot)\|^2_{H^3(\Omega)}+\|\partial_{z_0}\vp(z_0,\cdot)\|^2_{H^2(\Omega)}dz_0+\mathcal{K}\nonumber,
\end{align}
where
\begin{align}
&\mathcal{K}=-\int_{\Omega_T}\partial_{z_0}(e^{-2\eta z_0}\frac{r_{22}}{b_1}([\partial_{z_2}^2,\mathcal{B}]\vp)\partial_{z_1}\partial_{z_2}^2\vp)dz\nonumber\\
&\qquad-2\eta \int_{\Omega_T}e^{-2\eta z_0}\frac{r_{22}}{b_1}([\partial_{z_2}^2,\mathcal{B}]\vp)\partial_{z_1}\partial_{z_2}^2\vp)+\int_{\Omega_T}\!\!e^{-2\eta z_0}\partial_{z_0}(\frac{r_{22}}{b_1})([\partial_{z_2}^2,\mathcal{B}]\vp)\partial_{z_1}\partial_{z_2}^2\vp dz\nonumber\\
&\qquad+\int_{\Omega_T}e^{-2\eta z_0}\frac{r_{22}}{b_1}\partial_{z_0}([\partial_{z_2}^2,\mathcal{B}]\vp)\partial_{z_1}\partial_{z_2}^2\vp dz.
\end{align}
By assumptions $(\romannumeral1)$, $(\romannumeral2)$, and $(\romannumeral4)$ and $\eqref{eq:commutator -H_1}$, we have
\begin{align}
    |\mathcal{K}|&\lesssim \delta((\eta+1) \int_0^T e^{-2\eta z_0}\|\vp(z_0,\cdot)\|^2_{H^3(\Omega)}dz_0+e^{-2\eta T} \|\vp(T,\cdot)\|^2_{H^3(\Omega)} )\nonumber\\
    &\quad+\delta\int_0^T e^{-2\eta z_0}(\|\partial_{z_0}\vp(z_0,\cdot)\|^2_{H^2(\Omega)}+\|\vp(z_0,\cdot)\|^2_{H^3(\Omega)})dz_0.\label{eq:estimate of K}
\end{align}
Hence we obtain
\begin{align}
    &\left|\int_0^T\int_{\mathbb{R}^+}e^{-2\eta z_0}H_1|_{z_1=0}dz_2dz_0\right|\nonumber\\
    &\qquad\lesssim \delta((\eta+1) \int_0^T e^{-2\eta z_0}\|\vp(z_0,\cdot)\|^2_{H^3(\Omega)}dz_0+e^{-2\eta T}\|\vp(T,\cdot)\|^2_{H^3(\Omega)} )\nonumber\\
    &\qquad\quad+\delta\int_0^T e^{-2\eta z_0}\|\partial_{z_0}\vp(z_0,\cdot)\|^2_{H^2(\Omega)}dz_0.\label{eq:3rd order vertical bdry term}
\end{align}
Now we turn to estimate the last term in \eqref{eq: 1st step of 3rd order estimate}, which is the boundary term on $\{z_2=0\}$. With the aid of \eqref{eq:bdry condition for p_z2z2_vp} and the Gauss theorem, one has
\begin{align}
    -2\int^T_0 \!\!\int_{\mathbb{R}^+} &e^{-2\eta z_0}r_{22}(\partial_{z_0}\partial_{z_2}^2\vp\partial_{z_2}^3\vp)|_{z_2=0}dz_0dz_1\nonumber\\
    &=-2\int_0^Te^{-2\eta z_0}\partial_{z_0}\partial_{z_2}^2\vp\mathcal{L}(\partial_{z_2}\vp)dz_0dz_1\nonumber\\
    &=2\int_{\Omega_T}\partial_{z_2}(e^{-2\eta z_0}\partial_{z_0}\partial_{z_2}^2\vp\mathcal{L}(\partial_{z_2}\vp))dzdz_0\nonumber\\
    &=2\int_{\Omega_T}e^{-2\eta z_0}(\partial_{z_0}\partial_{z_2}^3\vp\mathcal{L}(\partial_{z_2}\vp)+\partial_{z_0}\partial_{z_2}^2\vp\partial_{z_2}\mathcal{L}(\partial_{z_2}\vp))dzdz_0\nonumber\\
    &\defs R_1+R_2.\label{eq:defn of I_1 and I_2}
\end{align}
Via integration by parts with respect to $z_0$, we have
\begin{align}
    |R_1|&=\left|2\int_{\Omega_T}\partial_{z_0}(e^{-2\eta z_0}\partial_{z_2}^3\vp\mathcal{L}(\partial_{z_2}\vp))+ e^{-2\eta z_0}(2\eta\partial_{z_2}^3\vp\mathcal{L}(\partial_{z_2}\vp)-\partial_{z_2}^3\vp\partial_{z_0}\mathcal{L}(\partial_{z_2}\vp)\ dz\right|\nonumber\\
    &\le 2e^{-2\eta T}\int_{\Omega}(|\partial_{z_2}^3\vp\mathcal{L}(\partial_{z_2}\vp))|_{z_0=T}|dz_1dz_2+2\int_{\Omega}(|\partial_{z_2}^3\vp\mathcal{L}(\partial_{z_2}\vp))|_{z_0=0}|dz_1dz_2\nonumber\\
    &\quad+4\eta \int_{\Omega_T}e^{-2\eta z_0}|\partial_{z_2}^3\vp\mathcal{L}(\partial_{z_2}\vp)|dz+2\int_{\Omega_T}e^{-2\eta z_0}|\partial_{z_2}^3\vp\partial_{z_0}\mathcal{L}(\partial_{z_2}\vp)|dz\nonumber\\
    &\lesssim e^{-2\eta T}\int_\Omega (q|\partial_{z_2}^3\vp|_{z_0=T}|^2+\frac{1}{q}|\mathcal{L}(\partial_{z_2}\vp)|_{z_0=T}|^2)dz_1z_2+\|\vp_0\|^2_{H^3(\Omega)}+\|\vp_1\|^2_{H^2(\Omega)}\nonumber\\
    &\quad+\|\partial_{z_2}f|_{z_0=0}\|^2_{L^2(\Omega)}+2\eta\int_{\Omega_T}e^{-2\eta z_0}(q|\partial_{z_2}^3\vp|^2+\frac{1}{q}|\mathcal{L}(\partial_{z_2}\vp)|^2)dz\nonumber\\
    &\quad+\int_{\Omega_T}e^{-2\eta z_0}(q\eta |\partial_{z_2}^3\vp|^2+\frac{1}{q\eta }|\partial_{z_0}\mathcal{L}(\partial_{z_2}\vp)|^2)dz
\end{align}
By \eqref{eq:low norm bounded by higher norm} and the fact that the coefficients of $\mathcal{L}$ are $
W^{1,\infty}(\Omega_T)$ functions, we obtain
\begin{align}
    \eta\int_{\Omega_T}&e^{-2\eta z_0}|\mathcal{L}(\partial_{z_2}\vp)|^2dz+e^{-2\eta T}\int_{\Omega}|\mathcal{L}(\partial_{z_2}\vp)|_{z_0=T}|^2dz_1dz_2\nonumber\\
    &\le \frac{1}{\eta}\int_{\Omega_T}e^{-2\eta z_0}|\partial_{z_0}\mathcal{L}(\partial_{z_2}\vp)|^2dz+\|\mathcal{L}(\partial_{z_2}\vp)|_{z_0=0}\|^2_{L^2(\Omega)}\nonumber\\
    &\lesssim \frac{1}{\eta}\int_{\Omega_T}e^{-2\eta z_0}(\sum_{|\alpha|\le 3}|D^\alpha \vp|^2+|\mathcal{L}(\partial_{z_0z_2}\vp)|^2)dz+\|\partial_{z_2}f|_{z_0=0}\|^2_{L^2(\Omega)}.
\end{align}
Combining above two inequalities, we are led to
\begin{align}
    &|R_1|\lesssim q(\eta \int_{\Omega_T}e^{-2\eta z_0}|\partial_{z_2}^3\vp|^2dz+e^{-2\eta T}\int_\Omega |\partial_{z_2}^3\vp|_{z_0=T}|^2dz_1dz_2)\nonumber\\
    &\qquad+\frac{1}{q}(\frac{1}{\eta}\int_{\Omega_T}e^{-2\eta z_0}(\sum_{|\alpha|\le 3}|D^\alpha \vp|^2+|\mathcal{L}(\partial_{z_0z_2}\vp)|^2)dz+\|\partial_{z_2}f|_{z_0=0}\|^2_{L^2(\Omega)})\nonumber\\
    &\qquad+\|\partial_{z_2}f|_{z_0=0}\|^2_{L^2(\Omega)}.\label{eq:I_1 estimation}
\end{align}
Now we proceed to the estimate of $R_2$. In fact, one has
\begin{align}
 |R_2|&\le2\int_{\Omega_T}e^{-2\eta z_0}|\partial_{z_0}\partial_{z_2}^2
 \vp\partial_{z_2}\mathcal{L}(\partial_{z_2}\vp)|dz\nonumber\\
 &\le  \int_{\Omega_T}e^{-2\eta z_0}(q\eta|\partial_{z_0}\partial_{z_2}^2\vp|^2+\frac{1}{q\eta}|\partial_{z_2}\mathcal{L}(\partial_{z_2}\vp)|^2)dz\nonumber\\
 &\le \int_{\Omega_T}e^{-2\eta z_0}(q\eta|\partial_{z_0}\partial_{z_2}^2\vp|^2+\frac{1}{q\eta}(|\mathcal{L}(\partial_{z_2}^2\vp)|^2+\sum_{|\alpha|\le 3}|D^\alpha\vp|^2))dz.\label{eq:I_2 estimation}
\end{align}
Then inequality \eqref{eq:I_2 estimation} together with \eqref{eq:1st order estimate of partial_z2z2_vp, firts step}, \eqref{eq:3rd order vertical bdry term}, \eqref{eq:defn of I_1 and I_2}, and \eqref{eq:I_1 estimation} yields the following estimate
\begin{align}
    \eta &\int_{\Omega_T}\!\!e^{-2\eta z_0}|D\partial_{z_2}^2\vp|^2dz+e^{-2\eta T}\int_\Omega |D\partial_{z_2}^2\vp|_{z_0=T}|^2dz_1z_2\nonumber\\
    &\lesssim \int_{\Omega_T}e^{-2\eta z_0}(q\eta +1)|D\partial_{z_2}^2\vp|^2+\frac{1}{q\eta}|\mathcal{L}(\partial_{z_2}^2\vp|^2)dz\nonumber\\
    &\quad+ \delta((\eta+1) \int_{\Omega_T}e^{-2\eta z_0}\|\vp(z_0,\cdot)\|^2_{H^3(\Omega)}dz_0+e^{-2\eta T}\|\vp(T,\cdot)\|^2_{H^3(\Omega)} )\nonumber\\
    &\quad+\delta\int_{\Omega_T}e^{-2\eta z_0}\|\partial_{z_0}\vp(z_0,\cdot)\|^2_{H^2(\Omega)}\nonumber\\
    &\quad+q(\eta \int_{\Omega_T}e^{-2\eta z_0}(|\partial_{z_2}^3\vp|^2+|\partial_{z_0}\partial_{z_2}^2\vp|^2)dz+e^{-2\eta T}\int_\Omega |\partial_{z_2}^3\vp|_{z_0=T}|^2dz_1dz_2)\nonumber\\
    &\quad+\frac{1}{q}(\frac{1}{\eta}\int_{\Omega_T}e^{-2\eta z_0}(\sum_{|\alpha|\le 3}|D^\alpha \vp|^2+|\mathcal{L}(\partial_{z_0z_2}\vp)|^2+|\mathcal{L}(\partial_{z_2}^2\vp)|^2)dz+\|\partial_{z_2}f|_{z_0=0}\|^2_{L^2(\Omega)})\nonumber\\
    &\quad+\|\partial_{z_2}f|_{z_0=0}\|^2_{L^2(\Omega)}. \label{eq:1st order estimate of partial_z2z2_vp}
\end{align}
Note that
\begin{align}
    &\partial_{z_1}^2\partial_{z_2}\vp=\frac{1}{r_{11}}(\mathcal{L}(\partial_{z_2}\vp)-\sum_{(i,j)\neq (1,1)}\!\!\!r_{ij}\partial_{ij}\partial_{z_2}\vp),\\
    &\partial_{z_1}^3\vp=\frac{1}{r_{11}}(\mathcal{L}(\partial_{z_1}\vp)-\sum_{(i,j)\neq (1,1)}\!\!\!r_{ij}\partial_{ij}\partial_{z_1}\vp).
\end{align}
So if we select $q$ and $\delta$ properly small and then $\eta$ appropriately large, the third order derivatives on the right hand-side of the inequality above can be absorbed by the left hand-side terms. 
Therefore, 
\begin{align}
    \eta&\int_{\Omega_T}e^{-2\eta z_0}(|\partial_{z_1}^2\partial_{z_2}\vp|^2+|\partial_{z_1}^3\vp|^2)dz+e^{-2\eta T}\!\int_{\Omega}(|\partial_{z_1}^2\partial_{z_2}\vp|^2+|\partial_{z_1}^3\vp|^2)dz_1dz_2\nonumber\\
    &\lesssim \eta\int_{\Omega_T}e^{-2\eta z_0}|D\partial^2_{z_2}\vp|^2dz+e^{-2\eta T}\!\int_{\Omega}|D\partial_{z_2}^2\vp|^2dz_1dz_2\nonumber\\
    &\quad+ \sum_{|\alpha|\le 2}(\eta\int_{\Omega_T}e^{-2\eta z_0}|D^\alpha\partial_{z_0}\vp|^2dz+e^{-2\eta T}\!\int_{\Omega}|D^\alpha\partial_{z_0}\vp|^2dz_1dz_2)\nonumber\\
    &\quad+ \sum_{|\alpha|\le 2}(\eta\int_{\Omega_T}e^{-2\eta z_0}|D^\alpha\vp|^2dz+e^{-2\eta T}\!\int_{\Omega}|D^\alpha\vp|^2dz_1dz_2)\nonumber\\
    &\quad+ \sum_{i=1}^2(\eta\int_{\Omega_T}e^{-2\eta z_0}|\mathcal{L}(\partial_{z_i}\vp)|^2dz+e^{-2\eta T}\!\int_{\Omega}|\mathcal{L}(\partial_{z_i}\vp)|^2dz_1dz_2).\label{eq:estimate of left 3rd order derivatives by equation}
\end{align}
Then it follows from \eqref{eq:low norm bounded by higher norm} that 
\begin{align}
    \sum_{i=1}^2&(\eta\int_{\Omega_T}e^{-2\eta z_0}|\mathcal{L}(\partial_{z_i}\vp)|^2dz+e^{-2\eta T}\!\int_{\Omega}|\mathcal{L}(\partial_{z_i}\vp)|^2dz_1dz_2)\nonumber\\
    &\le \frac{1}{\eta}\int_{\Omega_T}e^{-2\eta z_0}(\sum_{|\alpha|\le 3}|D^\alpha\vp|^2+\sum_{i=1}^2|\mathcal{L}(\partial_{z_iz_0}\vp)|^2)dz+\sum_{i=1}^2\|\partial_{z_i}f|_{z_0=0}\|^2_{L^2(\Omega)}.\label{eq:3-42}
\end{align}
Substituting \eqref{eq:3.38}, \eqref{eq:1st order estimate of partial_z2z2_vp} and \eqref{eq:3-42} into \eqref{eq:estimate of left 3rd order derivatives by equation}, one has
\begin{align}
    \eta&\int_{\Omega_T}e^{-2\eta z_0}(|\partial_{z_1}^2\partial_{z_2}\vp|^2+|\partial_{z_1}^3\vp|^2)dz+e^{-2\eta T}\!\int_{\Omega}(|\partial_{z_1}^2\partial_{z_2}\vp|^2+|\partial_{z_1}^3\vp|^2)dz_1dz_2\nonumber\\
    &\lesssim \int_{\Omega_T}e^{-2\eta z_0}(q\eta +1)|D\partial_{z_2}^2\vp|^2+\frac{1}{q\eta}|\mathcal{L}(\partial_{z_2}^2\vp|^2)dz\nonumber\\
    &\quad+\delta((\eta+1) \int_0^T e^{-2\eta z_0}\|\vp(z_0,\cdot)\|^2_{H^3(\Omega)}dz_0+e^{-2\eta T}\|\vp(T,\cdot)\|^2_{H^3(\Omega)} )\nonumber\\
    &\quad+\delta\int_0^T e^{-2\eta z_0}\|\partial_{z_0}\vp(z_0,\cdot)\|^2_{H^2(\Omega)}dz_0\nonumber\\
    &\quad+q(\eta \int_{\Omega_T}e^{-2\eta z_0}(|\partial_{z_2}^3\vp|^2+|\partial_{z_0}\partial_{z_2}^2\vp|^2)dz+e^{-2\eta T}\int_\Omega |\partial_{z_2}^3\vp|_{z_0=T}|^2dz_1dz_2)\nonumber\\
    &\quad+\frac{1}{q}(\frac{1}{\eta}\int_{\Omega_T}e^{-2\eta z_0}(\sum_{|\alpha|\le 3}|D^\alpha \vp|^2+|\mathcal{L}(\partial_{z_0z_2}\vp)|^2+|\mathcal{L}(\partial_{z_2}^2\vp)|^2)dz+\|\partial_{z_2}f|_{z_0=0}\|^2_{L^2(\Omega)})\nonumber\\
    &\quad+\frac{1}{\eta}\sum_{|\alpha|\le 2}\|e^{-\eta z_0}\mathcal{L}(D^\alpha \vp)\|^2_{L^2(\tilde{\Omega}_T)}+\sum_{|\alpha|\le 1}\|D^\alpha f|_{z_0=0}\|^2_{L^2(\tilde{\Omega})}\nonumber\\
   &\quad+ \frac{1}{\eta}\int_{\Omega_T}e^{-2\eta z_0}(\sum_{|\alpha|\le 3}|D^\alpha\vp|^2)dz.\label{eq:3-44}
\end{align}
To this end, we are ready to conclude the third order estimate. Adding \eqref{eq:second order estimate}, \eqref{eq:3.38}, \eqref{eq:1st order estimate of partial_z2z2_vp} and \eqref{eq:3-44} together, letting $q$ and $\delta$ be properly small and $\eta$ be appropriately large, we deduce that
\begin{align}
    \sum_{|\alpha|\le 3}&\left(\eta \int_{\Omega_T}e^{-2\eta z_0}|D^\alpha\vp|^2dz+e^{-2\eta T}\int_{\Omega}|D^\alpha\vp|_{z_0=T}|^2dz_1z_2\right)\nonumber\\
    &\lesssim \frac{1}{\eta}\sum_{|\alpha|\le 2}\int_{\Omega_T}\!\!e^{-2\eta z_0}|\mathcal{L}(D^\alpha\vp)|^2dz+\sum_{|\alpha|\le 1}\|D^\alpha f|_{z_0=0}\|^2_{L^2(\Omega)}.\label{eq:3rd order estimate}
\end{align}
\end{proof}

\subsection{Fourth order estimate of the solution} 
In this subsection, we establish the fourth order estimate of the solution obtained in lemma \ref{H2 solvability}. For the fourth order estimate, since the normal derivative of the solution contained in the boundary term is one order higher than the one contained in the boundary term of third order estimate, the representations of the boundary terms as the commutators are insufficient to derive the desired estimate. Hence more careful analysis and computation is needed to establish the fourth order estimate. 
We summarize the fourth order estimate as the following lemma: 
\begin{lem}\label{fourth order regularity}
There exists $\delta_4>0$ such that if assumptions $(\romannumeral1)-(\romannumeral4)$ hold for $\delta\le \delta_4$, then there exists a constant $\eta_4>1$ such that for any $T>0$ and $\eta\ge \eta_4$, the $H^2(\Omega_T)$ solution of problem $\eqref{linear problem}$ satisfies
    \begin{align}
    &\sum_{|\alpha|= 4}\eta  \|e^{-\eta z_0}D^
    \alpha \vp\|^2_{L^2(\Omega_T)}+e^{-2\eta T}\|D^
    \alpha \vp(T,\cdot)\|^2_{L^2(\Omega)}\nonumber\\
    &\lesssim \frac{1}{\eta}\sum_{|\alpha|\le 3}\|e^{-\eta z_0}\mathcal{L}(D^\alpha\vp)\|^2_{L^2(\Omega_T)}
    +\|e^{-\eta z_0}f\|^2_{H^{3}(\Omega_T)}
    +\|f|_{t=0}\|^2_{H^2(\Omega)}.\label{energy estimate in lemma 4.2}
\end{align}
\end{lem}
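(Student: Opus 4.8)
The plan is to run the same multiplier argument as in the proof of Lemma~\ref{third order regularity}, but to treat the two boundary integrals — where the normal derivative appearing is now one order higher, so that the pure commutator representation no longer suffices — by combining that representation with repeated integration by parts in $z_0$ and with the Gauss theorem in the normal variable, arranging matters so that every fifth-order quantity produced is either one of the admissible terms $\mathcal{L}(D^\alpha\vp)$ with $|\alpha|=3$ or carries a factor $\delta$, $q$ or $\eta^{-1}$ and is therefore absorbable. First I would reduce the claim to the first-order energy estimate of $w\defs\partial_{z_2}^3\vp$. Indeed $\partial_{z_0}$ is tangential to both $\Gamma_1$ and $\Gamma_2$ and $b_1,b_2$ do not depend on $z_0$, so $\partial_{z_0}\vp$ solves a problem of the same form with source $\partial_{z_0}f-[\partial_{z_0},\mathcal{L}]\vp$ and the same homogeneous boundary operators; applying the third-order estimate \eqref{eq:3rd order estimate} to $\partial_{z_0}\vp$ controls every fourth-order derivative of $\vp$ containing a $\partial_{z_0}$ by $\eta^{-1}\sum_{|\beta|\le3}\|e^{-\eta z_0}\mathcal{L}(D^\beta\vp)\|_{L^2(\Omega_T)}^2$ plus admissible data terms. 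The remaining purely spatial derivatives $\partial_{z_1}^b\partial_{z_2}^c\vp$ with $b+c=4$ and $b\ge2$ are recovered from the equation applied to $\partial_{z_1}^{b-2}\partial_{z_2}^c\vp$, exactly as in \eqref{eq:estimate of left 3rd order derivatives by equation}, in terms of $\mathcal{L}(D^\alpha\vp)$ with $|\alpha|=2$, of $\partial_{z_0}$-derivatives, and ultimately of $\nabla\partial_{z_2}^3\vp$; hence it suffices to estimate $\nabla w$.

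The function $w=\partial_{z_2}^3\vp$ satisfies $\mathcal{L}w=\partial_{z_2}^3f-[\partial_{z_2}^3,\mathcal{L}]\vp$ in $\Omega_T$, $\mathcal{B}w=-[\partial_{z_2}^3,\mathcal{B}]\vp$ on $\Gamma_1$, homogeneous data on $\Gamma_0$, and — applying the equation to $\partial_{z_2}^2\vp$ and restricting to $\Gamma_2$, where $r_{02}=r_{12}=0$ by assumption $(\romannumeral3)$ — the derived Neumann-type relation
\[
r_{22}\,\partial_{z_2}w=\mathcal{L}(\partial_{z_2}^2\vp)-r_{00}\partial_{z_0}^2\partial_{z_2}^2\vp-2r_{01}\partial_{z_0z_1}\partial_{z_2}^2\vp-r_{11}\partial_{z_1}^2\partial_{z_2}^2\vp\qquad\text{on }\Gamma_2.
\]
Multiplying $\mathcal{L}w=\cdots$ by $2e^{-2\eta z_0}\partial_{z_0}w$ and integrating by parts over $\Omega_T$ as in \eqref{eq: 1st step of 3rd order estimate} produces the coercive term $\eta\int_{\Omega_T}e^{-2\eta z_0}H_0+e^{-2\eta T}\int_\Omega H_0|_{z_0=T}$ with $H_0\ge C|Dw|^2$ (assumptions $(\romannumeral1)$, $(\romannumeral2)$), a quadratic commutator volume term bounded by $C\delta$ times $H^3$-norms of $\vp$ and $\partial_{z_0}\vp$ (controlled by Lemma~\ref{third order regularity}), the term at $z_0=0$, which vanishes, and the two boundary integrals $\int_{\Gamma_1}e^{-2\eta z_0}H_1$ and $-2\int_{\Gamma_2}e^{-2\eta z_0}r_{22}(\partial_{z_0}w)(\partial_{z_2}w)$.

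The heart of the argument is these two boundary integrals. On $\Gamma_1$, co-normality (assumption $(\romannumeral4)$) gives $r_{11}\partial_{z_1}+r_{12}\partial_{z_2}=(r_{11}/b_1)\mathcal{B}$, so $H_1|_{\Gamma_1}$ is a multiple of $(\mathcal{B}w)\partial_{z_0}w=-([\partial_{z_2}^3,\mathcal{B}]\vp)\partial_{z_0}w$; the commutator is third-order and, since $\partial_{z_2}^{k}b_1$ ($k\ge1$) and $b_2$ are $O(\delta)$ by assumption $(\romannumeral2)$, all of it but a harmless bottom term of order one carries a factor $\delta$. Converting $\int_{\Gamma_1}$ to a volume integral by the Gauss theorem in $z_1$, the terms in which $\partial_{z_1}$ hits the $O(\delta)$ coefficient or the commutator are $O(\delta)$ times products of fourth-order quantities, hence absorbable; the only fifth-order term, $\partial_{z_0}\partial_{z_1}w$, is treated by an integration by parts in $z_0$ exactly as the term $\mathcal{K}$ in the third-order proof, which moves $\partial_{z_0}$ onto the $O(\delta)$-weighted third-order factor and leaves $\delta\eta$ times (third-order, controlled by Lemma~\ref{third order regularity}) $\times$ (fourth-order, absorbed), plus boundary-in-time pieces at $z_0=T$ (controlled by Lemma~\ref{third order regularity} at time $T$) and at $z_0=0$ (zero). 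On $\Gamma_2$ one substitutes the displayed relation for $r_{22}\partial_{z_2}w$, writing the integral as a sum of $\Gamma_2$-integrals of $\partial_{z_0}w$ against $\mathcal{L}(\partial_{z_2}^2\vp)$, $\partial_{z_0}^2\partial_{z_2}^2\vp$, $\partial_{z_0z_1}\partial_{z_2}^2\vp$ and $\partial_{z_1}^2\partial_{z_2}^2\vp$; applying the Gauss theorem in $z_2$ to each raises one factor by a single $z_2$-derivative, and the resulting fifth-order terms are either of the admissible form $\mathcal{L}(D^\alpha\vp)$ with $|\alpha|=3$ (using $\partial_{z_2}\mathcal{L}(\partial_{z_2}^2\vp)=\mathcal{L}(\partial_{z_2}^3\vp)+(\text{order four})$ and $\mathcal{L}(\partial_{z_0}\partial_{z_2}^2\vp)$), or of the form $\partial_{z_0}(\text{order four})$, handled by one more integration by parts in $z_0$; for the piece involving $\partial_{z_1}^2\partial_{z_2}^2\vp$ one may instead first integrate by parts once in $z_1$ along $\Gamma_2$ (both $\partial_{z_0}$ and $\partial_{z_1}$ being tangential there). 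After also using \eqref{eq:low norm bounded by higher norm} to bound $\mathcal{L}(\partial_{z_i}\vp)$-type terms by $\mathcal{L}(\partial_{z_0z_i}\vp)$ plus lower order, no term worse than an absorbable fourth-order quantity, an admissible $\mathcal{L}(D^\alpha\vp)$ with $|\alpha|\le3$, or an admissible data term survives.

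Finally, adding the first-order estimate of $w=\partial_{z_2}^3\vp$ to the $\partial_{z_0}$-estimate and to the equation-based bounds for $\partial_{z_1}^b\partial_{z_2}^c\vp$ ($b\ge2$), choosing $q$ and $\delta$ small and then $\eta$ large to absorb all fourth-order quantities appearing with small coefficients, and invoking Lemma~\ref{third order regularity} for every remaining lower-order right-hand side, yields the stated fourth-order estimate. I expect the main obstacle to be precisely the bookkeeping in the boundary integrals: one must organise the integrations by parts so that each fifth-order quantity is recognised either as $\mathcal{L}(D^\alpha\vp)$ with $|\alpha|=3$ or as carrying a factor $\delta$, $q$ or $\eta^{-1}$, and this is possible only because of the co-normality of $\mathcal{B}$, the identities $r_{01}=r_{10}=0$ and $r_{12}/r_{11}=b_2/b_1$ on $\Gamma_1$, the vanishing of $r_{02}=r_{12}$ on $\Gamma_2$, and the smallness of $\partial_{z_2}^{k}b_1$ and $\partial_{z_2}^{k}b_2$ from assumption $(\romannumeral2)$.
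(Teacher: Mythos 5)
Your overall route is the same as the paper's: reduce everything to the first–order energy estimate of $w=\partial_{z_2}^3\vp$ (after applying the third–order estimate to $\partial_{z_0}\vp$ and recovering the remaining spatial derivatives from the equation), use co-normality plus the commutator representation and a $z_0$-integration by parts on $\Gamma_1$, and on $\Gamma_2$ substitute the derived relation \eqref{3-58} and convert to volume integrals by the Gauss theorem, with a tangential $z_1$-integration by parts for the piece involving $\partial_{z_1}^2\partial_{z_2}^2\vp$. Up to bookkeeping, this matches the paper's proof of Lemma \ref{fourth order regularity}.

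However, there is a genuine gap precisely at the step you treat most casually. The boundary $\Gamma_2$ is the half-line $\{z_1>0,\,z_2=0\}$, so when you ``integrate by parts once in $z_1$ along $\Gamma_2$'' (and likewise in the subsequent $z_1$-integrations needed for the terms where $\partial_{z_0}$- or $\partial_{z_1}$-derivatives fall on the coefficients), you pick up endpoint contributions at the corner $(z_1,z_2)=(0,0)$, of the type
\begin{equation*}
\int_0^T e^{-2\eta z_0}\,\tfrac{r_{11}}{r_{22}}\,\partial_{z_0}\partial_{z_2}^3\vp(z_0,0,0)\,\partial_{z_1}\partial_{z_2}^2\vp(z_0,0,0)\,dz_0 ,
\end{equation*}
which is exactly the term the paper isolates as $\mathcal{I}_{4,1}$ (and the analogous term $\mathcal{J}_{2,2}^1$ later). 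Such pointwise-in-space quantities live on a codimension-two set and cannot be absorbed or bounded by the energy, trace, or $L^4$ estimates you invoke; your proposal neither mentions them nor explains how they are controlled. The paper's resolution is not an estimate but an exact cancellation: using the corner conditions $\partial_{z_2}^k b_2(0,0)=0$, $k=0,1,2$, of assumption $(\romannumeral3)$ together with both boundary conditions, one first shows $\partial_{z_1}\vp(z_0,0,0)=\partial_{z_1z_2}\vp(z_0,0,0)=0$ as in \eqref{3.70}, hence $\mathcal{B}(\partial_{z_2}^2\vp)(z_0,0,0)=0$ and $\partial_{z_1}\partial_{z_2}^2\vp(z_0,0,0)=0$ via \eqref{3.69} and \eqref{3.83}, so that these corner integrals vanish identically. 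Since your closing list of structural ingredients omits precisely these corner conditions (you only cite $r_{02}=r_{12}=0$ on $\Gamma_2$ and the smallness of $\partial_{z_2}^k b_i$), the key idea that makes the fourth-order boundary analysis close is missing, and without it the argument as written does not go through.
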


\begin{proof}
 Since $\partial_{z_0}$ is tangential to both boundaries, applying the third order estimation \eqref{eq:3rd order estimate} to $\partial_{z_0}\vp$, one has
\begin{align}
    \sum_{|\alpha|\le 3}&\left(\eta \int_{\Omega_T}e^{-2\eta z_0}|D^\alpha\partial_{z_0}\vp|^2dz+e^{-2\eta T}\int_{\Omega}|D^\alpha\partial_{z_0}\vp|_{z_0=T}|^2dz_1z_2\right)\nonumber\\
    &\lesssim \frac{1}{\eta}\sum_{|\alpha|\le 2}\int_{\Omega_T}\!\!e^{-2\eta z_0}|\mathcal{L}(D^\alpha\partial_{z_0}\vp)|^2dz+\sum_{|\alpha|\le 2}\|D^\alpha f|_{z_0=0}\|^2_{L^2(\Omega)}.\label{eq:3rd order estimate of partial_z0 vp}
\end{align}
Next, we will firstly derive the first order estimate of $\partial_{z_2}^3\vp$, \emph{i.e.}, the estimate of $D \partial_{z_2}^3\vp$. Then by the equation, we are able to control the other fourth order derivatives.
It is easy to verify that $\partial_{z_2}^3\vp$ satisfies
\begin{equation}\label{eq:IBVP of partial_z2z2z_2}
\begin{cases}
\mathcal{L}(\partial_{z_2}^3\varphi)=-[\partial_{z_2}^3,\mathcal{L}]\varphi+\partial_{z_2}^3f, &\mbox{in\ } \Omega_T,\\
\mathcal{B}(\partial_{z_2}^3\varphi)=-[\partial_{z_2}^3,\mathcal{B}]\vp,&\mbox{on \ }\Gamma_1,\\
\partial_{z_2}^3\varphi=0,&\mbox{on \ }\Gamma_0,\\
\partial_{z_0}(\partial_{z_2}^3\varphi)=0,&\mbox{on \ }\Gamma_0.
\end{cases}
\end{equation}
Multiplying $2e^{-2\eta z_0}\partial_{z_0}\partial_{z_2}^3\vp$ on both sides of $\eqref{eq:IBVP of partial_z2z2z_2}_1$, and integrating by parts over $\Omega_T$, one has
\begin{align}
    2\int_{\Omega_T}&\!\!\!e^{-2\eta z_0}\mathcal{L}(\partial_{z_2}^3\vp)\partial_{z_0}\partial_{z_2}^3\vp dz\nonumber\\
    &=\left[\int_{\Omega}e^{-2\eta z_0}H_0dz_1dz_2\right]^{z_0=T}_{z_0=0}+2\eta \int_{\Omega_T}e^{-2\eta z_0}H_0dz-\int_0^T\!\!\!\int_{\mathbb{R}^+}\!\!e^{-2\eta z_0}H_1|_{z_1=0}dz_2dz_0\nonumber\\
    &\quad-2\int_0^T\!\!\int_{\mathbb{R}^+}e^{-2\eta z_0}(r_{22} \partial_{z_0}\partial_{z_2}^3\vp\partial_{z_2}^4\vp)|_{z_2=0}dz_1dz_0+\int_{\Omega_T}\!\!e^{-2\eta z_0} P_3(D\vp)dz
\end{align}
where
\begin{align}
    H_0&=|\partial_{z_0}\partial_{z_2}^3\vp|^2-r_{11}|\partial_{z_1}\partial_{z_2}^3\vp|^2-r_{22}|\partial_{z_2}^4\vp|^2,\\\
    H_1&=2(r_{11}\partial_{z_1}\partial_{z_2}^3\vp+r_{12}\partial_{z_2}^4\vp)\partial_{z_0}\partial_{z_2}^3\vp,
\end{align}
and $P_3(D\partial_{z_2}^3\vp)$ is a quadratic polynomial in $D\partial_{z_2}^3\vp$ with bounded coefficients. Assumptions $(\romannumeral1)$ and $(\romannumeral2)$ imply that
\begin{equation}
H_0\ge C|D\partial_{z_2}^3\vp|^2 \qquad\mbox{and}\qquad
	|P_3(D\vp)|\le C|D\partial_{z_2}^3\vp|^2.
\end{equation}

Hence by the Cauchy inequality, one has
\begin{align}
    \eta&\int_{\Omega_T}\!\!e^{-2\eta z_0}|D\partial_{z_2}^3\vp|^2 dz+e^{-2\eta T}\!\int_{\Omega}|D\partial_{z_2}^3\vp|_{z_0=T}|^2dz_1dz_2\nonumber\\
    &\lesssim \int_{\Omega_T}e^{-2\eta z_0}((q\eta +1)|D\partial_{z_2}^3\vp|^2+\frac{1}{q\eta}|\mathcal{L}(\partial_{z_2}^3\vp)|^2)dz+\int_0^T\!\!\!\int_{\mathbb{R}^+}e^{-2\eta z_0}H_1|_{z_1=0}dz_2dz_0\nonumber\\
    &\quad+2\int_0^T\!\!\!\int_{\mathbb{R}^+}e^{-2\eta z_0}r_{22}\partial_{z_0}\partial_{z_2}^3\vp\partial_{z_2}^4\vp|_{z_2=0} dz_1dz_0.\label{3-52}
\end{align}
By assumption $(\romannumeral3)$, we obtain
\begin{align}
    H_1|_{z_1=0}&=2r_{11}(\partial_{z_1}\partial_{z_2}^3\vp+\frac{r_{12}}{r_{11}}\partial_{z_2}^4\vp)\partial_{z_0}\partial_{z_2}^3\vp\nonumber\\
    &=2\frac{r_{22}}{b_1}\partial_{z_0}\partial_{z_2}^3\vp\{b_1\partial_{z_1}+b_2\partial_{z_2}\}\partial_{z_2}^3\vp\nonumber\\
    &=2\frac{r_{22}}{b_1}\partial_{z_0}\partial_{z_2}^3\vp\mathcal{B}\partial_{z_2}^3\vp\nonumber\\
    &=-2\frac{r_{22}}{b_1}\partial_{z_0}\partial_{z_2}^3\vp([\partial_{z_2}^3,\mathcal{B}]\vp)\nonumber\\
    &=-2\frac{r_{22}}{b_1}\partial_{z_0}\partial_{z_2}^3\vp\sum_{k=0}^2(\partial_{z_2}^{k+1} b_1\partial_{z_1}\partial_{z_2}^{2-k}\vp+\partial_{z_2}^{k+1}b_2\partial_{z_2}^{3-k}\vp),\label{3-53}
\end{align}
where $[\partial^3_{z_2},\mathcal{B}]$ is the commutator and we have used the boundary condition that $\mathcal{B}\vp|_{z_1=0}=0$ in the forth equality.
 For $k=0,1,2$, by the Gauss theorem, we deduce that
\begin{align}
    \left|\int_0^T\right.& \left.e^{-2\eta z_0}\frac{r_{22}}{b_1}\partial_{z_0}\partial_{z_2}^3\vp(\partial_{z_2}^{k+1}b_1)\partial_{z_1}\partial_{z_2}^{2-k}\vp|_{z_1=0}dz_2dz_0\right|\nonumber\\
   & = \left|\int_{\Omega_T} \partial_{z_1}(e^{-2\eta z_0}\frac{r_{22}}{b_1}\partial_{z_0}\partial_{z_2}^3\vp(\partial_{z_2}^{k+1}b_1)\partial_{z_1}\partial_{z_2}^{2-k}\vp)dz\right|\nonumber\\
   &\le \delta\int_{\Omega_T}e^{-2\eta z_0}(\|\partial_{z_0}\vp(z_0,\cdot)\|^2_{H^3(\Omega)}+\|\vp(z_0,\cdot)\|^2_{H^4(\Omega)})dz_0\nonumber\\
   &\quad+\abs{\int_{\Omega_T}e^{-2\eta z_0}\frac{r_{22}}{b_1}\partial_{z_2}^{k+1}b_1\partial_{z_0z_1}\partial_{z_2}^3\vp\partial_{z_1}\partial_{z_2}^{2-k}\vp dz}\nonumber\\
   &= \delta\int_{\Omega_T}e^{-2\eta z_0}(\|\partial_{z_0}\vp(z_0,\cdot)\|^2_{H^3(\Omega)}+\|\vp(z_0,\cdot)\|^2_{H^4(\Omega)})dz_0+\mathcal{A}.
   \end{align}
By integrating by parts with respect to $z_0$, one has for $k=0,1,2$ that
  \begin{align}
   \mathcal{A}&\le\abs{\int_{\Omega_T}\partial_{z_0}(e^{-2\eta z_0}\frac{r_{22}}{b_1}\partial_{z_2}^{k+1}b_1\partial_{z_1}\partial_{z_2}^3\vp\partial_{z_1}\partial_{z_2}^{2-k}\vp )dz}\nonumber\\
   &\quad+2\eta\left|\int_{\Omega_T}e^{-2\eta z_0}\frac{r_{22}}{b_1}\partial_{z_2}^{k+1}b_1\partial_{z_1}\partial_{z_2}^3\vp\partial_{z_1}\partial_{z_2}^{2-k}\vp dz\right|\nonumber\\
   &\quad+\left|\int_{\Omega_T}e^{-2\eta z_0}\partial_{z_0}(\frac{r_{22}}{b_1}\partial_{z_2}^{k+1}b_1)\partial_{z_1}\partial_{z_2}^3\vp\partial_{z_1}\partial_{z_2}^{2-k}\vp dz\right|\nonumber\\
   &\quad+\left|\int_{\Omega_T}e^{-2\eta z_0}\frac{r_{22}}{b_1}\partial_{z_2}^{k+1}b_1\partial_{z_1}\partial_{z_2}^3\vp\partial_{z_0z_1}\partial_{z_2}^{2-k}\vp dz\right|\nonumber\\
   &\lesssim \delta e^{-2\eta T}\|\vp(T,\cdot)\|^2_{H^4(\Omega)}+\delta(\eta+1)\int_0^T e^{-2\eta z_0}\|\vp(z_0,\cdot)\|^2_{H^4(\Omega)}dz_0\nonumber\\
   &\quad+\delta\int_0^T e^{-2\eta z_0}\|\partial_{z_0}\vp(z_0,\cdot)\|^2_{H^3(\Omega)}dz_0,
\end{align}
where in the last inequality, we have used assumption $(\romannumeral4)$. By the same argument, we can also obtain
\begin{align}
   \left|\int_0^T\right.& \left.e^{-2\eta z_0}\frac{r_{22}}{b_1}\partial_{z_0}\partial_{z_2}^3\vp(\partial_{z_2}^{k+1}b_2)\partial_{z_1}\partial_{z_2}^{3-k}\vp|_{z_1=0}dz_2dz_0\right|\nonumber\\
   &\lesssim \delta e^{-2\eta T}\|\vp(T,\cdot)\|^2_{H^4(\Omega)}+\delta(\eta+1)\int_0^T e^{-2\eta z_0}\|\vp(z_0,\cdot)\|^2_{H^4(\Omega)}dz_0\nonumber\\
   &\quad+\delta\int_0^T e^{-2\eta z_0}\|\partial_{z_0}\vp(z_0,\cdot)\|^2_{H^3(\Omega)}dz_0.\label{3-56}
\end{align}
By estimates $\eqref{3-53}$-$\eqref{3-56}$, we deduce
\begin{align}
    &\left|\int_0^T\right.\left.\!\!\!\!\int_{\mathbb{R}^+}e^{-2\eta z_0}H_1|_{z_1=0}dz_2dz_0\right|\nonumber\\
    &\quad\lesssim\delta\left(\eta\int_0^T e^{-2\eta z_0}\|\vp(z_0,\cdot)\|^2_{H^4(\Omega)}dz_0+e^{-2\eta T}\|\vp(T,\cdot)\|^2_{H^4(\Omega)}\right)\nonumber\\
    &\qquad+\delta \int_0^T \!\!\!e^{-2\eta z_0}(\|\vp(z_0,\cdot)\|^2_{H^3(\Omega)}+\|\partial_{z_0}\vp(z_0,\cdot)\|^2_{H^3(\Omega)})dz_0.\label{3-57}
\end{align}
We still need to control the boundary term on boundary $\{z_2=0\}$ in \eqref{3-52}.
By the equation and the properties of the coefficients, we know
\begin{align}
    r_{22}\partial_{z_2}^4\vp|_{z_2=0}&=\{\mathcal{L}-r_{00}\partial_{z_0}^2-2r_{01}\partial_{z_0z_1}-r_{11}\partial_{z_1}^2\}\partial_{z_2}^2\vp,\label{3-58}\\
    \partial_{z_i}\partial_{z_2}^3\vp|_{z_2=0}&=\frac{1}{r_{22}}\mathcal{L}(\partial_{z_iz_2}\vp)\mbox{\ }(i=0,1).\label{3-59}\\
    \partial_{z_2}^3\vp|_{z_2=0}&=\frac{1}{r_{22}}\mathcal{L}(\partial_{z_2}\vp)\label{3-60}.
\end{align}
Therefore, 
\begin{align}
    \int_0^T&\!\!\!\int_{\mathbb{R}^+}e^{-2\eta z_0}r_{22}\partial_{z_0}\partial_{z_2}^3\vp\partial_{z_2}^4\vp|_{z_2=0} dz_1dz_0\nonumber\\
    &=\int_0^T\!\!\!\int_{\mathbb{R}^+}\!\!e^{-2\eta z_0}\partial_{z_0}\partial_{z_2}^3\vp(\{\mathcal{L}-r_{00}\partial_{z_0}^2-2r_{01}\partial_{z_0z_1}-r_{11}\partial_{z_1}^2-\sum_{i=0}^2r_i\partial_{z_i}\}\partial_{z_2}^2\vp)dz_1dz_0\nonumber\\
    &\defs \mathcal{I}_1+\mathcal{I}_2+\mathcal{I}_3+\mathcal{I}_4+\mathcal{I}_5.
\end{align}
For $\mathcal{I}_1$, by the Gauss theorem, one has
\begin{align}
    |\mathcal{I}_1|&=\left|\int_{\Omega_T}\partial_{z_2}(e^{-2\eta z_0}\partial_{z_0}\partial_{z_2}^3\vp\mathcal{L}\partial_{z_2}^2\vp)dz\right|\nonumber\\
    &\le\left|\int_{\Omega_T}e^{-2\eta z_0}\partial_{z_0}\partial_{z_2}^3\vp\partial_{z_2}(\mathcal{L}\partial_{z_2}^2\vp)dz\right|+\left|\int_{\Omega_T}e^{-2\eta z_0}\partial_{z_0}\partial_{z_2}^4\vp\mathcal{L}(\partial_{z_2}^2\vp)dz\right|\nonumber\\
    &\lesssim \int_0^T e^{-2\eta z_0}(\frac{1}{q\eta }\|\partial_{z_2}\mathcal{L}(\partial_{z_2}^2\vp)(z_0,\cdot)\|^2_{L^2(\Omega)}+q\eta\|\partial_{z_0}\vp(z_0,\cdot)\|^2_{H^3(\Omega)})dz_0\nonumber\\
    &\quad+\left|\int_{\Omega_T}\partial_{z_0}(e^{-2\eta z_0}\partial_{z_2}^4\vp\mathcal{L}(\partial_{z_2}^2\vp))+ e^{-2\eta z_0}(2\eta\partial_{z_2}^4\vp\mathcal{L}(\partial_{z_2}^2\vp)-\partial_{z_2}^4\vp\partial_{z_0}\mathcal{L}(\partial_{z_2}^2\vp))dz\right|\nonumber\\
    &\lesssim  \int_0^T e^{-2\eta z_0}(\frac{1}{q\eta }\|\partial_{z_2}\mathcal{L}(\partial_{z_2}^2\vp)(z_0,\cdot)\|^2_{L^2(\Omega)}+q\eta\|\partial_{z_0}\vp(z_0,\cdot)\|^2_{H^3(\Omega)})dz_0\nonumber\\
    &\quad+e^{-2\eta T}\left(q\|\partial_{z_2}^4\vp(T,\cdot)\|^2_{L^2(\Omega)}+\frac{1}{q}\|\mathcal{L}(\partial_{z_2}^2\vp)(T,\cdot)\|^2_{L^2(\Omega)}\right)\nonumber\\
    &\quad+\eta\int_0^T e^{-2\eta z_0}q\|\partial_{z_2}^4\vp(z_0)\|^2_{L^2(\Omega)}+\frac{1}{q}\|\mathcal{L}\partial_{z_2}^2\vp(z_0)\|^2_{L^2(\Omega)}dz_0+\|D^2f|_{t=0}\|^2_{L^2(\Omega)}\nonumber\\
    &\quad+\int_0^T e^{-2\eta z_0}(\frac{1}{q\eta }\|\partial_{z_0}\mathcal{L}(\partial_{z_2}^2\vp)(z_0,\cdot)\|^2_{L^2(\Omega)}+q\eta\|\vp(z_0,\cdot)\|^2_{H^4(\Omega)})dz_0.
    \label{3-61}
\end{align}
By the boundedness of the coefficients of $\mathcal{L}$, we know that 
\begin{align}
    \int_{0}^T&e^{-2\eta z_0}(\|\partial_{z_2}\mathcal{L}(\partial_{z_2}^2\vp)\|^2_{L^2(\Omega)}+\|\partial_{z_0}\mathcal{L}(\partial_{z_2}^2\vp)\|^2_{L^2(\Omega)})dz_0\nonumber\\
    &\lesssim \int_0^T\!\!e^{-2\eta z_0}\left(\sum_{|\alpha|\le 4}\|D^\alpha\vp(z_0,\cdot)\|^2_{L^2(\Omega)}+\sum_{|\alpha|\le 3}\|\mathcal{L}(D^\alpha\vp)(z_0,\cdot)\|^2_{L^2(\Omega)}\right)dz_0.\label{3-62}
\end{align}
With the help of $\eqref{eq:low norm bounded by higher norm}$, one gets
\begin{align}
    \eta\int_{0}^T&e^{-2\eta z_0}\|\mathcal{L}\partial_{z_2}^2\vp\|^2_{L^2(\Omega)}dz_0+e^{-2\eta T}\|\mathcal{L}(\partial_{z_2}^2\vp)(T,\cdot)\|^2_{L^2(\Omega)}\nonumber\\
    &\le \frac{1}{\eta}\int_{0}^Te^{-2\eta z_0}\|\partial_{z_0}\mathcal{L}(\partial_{z_2}^2\vp)\|^2_{L^2(\Omega)}dz_0+\|\mathcal{L}(\partial_{z_2}^2\vp)|_{z_0=0}\|^2_{L^2(\Omega)}\nonumber\\
    &\lesssim \frac{1}{\eta}\int_{0}^Te^{-2\eta z_0}(\|\mathcal{L}(\partial_{z_0}\partial_{z_2}^2\vp)\|^2_{L^2(\Omega)}+\sum_{|\alpha|\le 4}\|D^\alpha\vp\|^2_{L^2(\Omega)})dz_0\nonumber\\
    &\quad+\|\partial_{z_2}^2f|_{z_0=0}\|^2_{L^2(\Omega)}.\label{3-63}
\end{align}
Combining $\eqref{3-61}$, $\eqref{3-62}$ and $\eqref{3-63}$, we deduce that
\begin{align}
    |\mathcal{I}_1|&\lesssim q\left(\eta \int_{\Omega_T}e^{-2\eta z_0}\sum_{|\alpha|\le 4}\|D^\alpha\vp(z_0,\cdot)\|^2_{L^2(\Omega)}dz_0+e^{-2\eta T}\sum_{|\alpha|\le 4}\|D^\alpha\vp(T,\cdot)\|^2_{L^2(\Omega)}\right)\nonumber\\
    &\quad+\frac{1}{q\eta}\int_0^T\!\!e^{-2\eta z_0}\left(\sum_{|\alpha|\le 4}\|D^\alpha\vp(z_0,\cdot)\|^2_{L^2(\Omega)}+\sum_{|\alpha|\le 3}\|\mathcal{L}(D^\alpha\vp)(z_0,\cdot)\|^2_{L^2(\Omega)}\right)dz_0\nonumber\\
    &\quad+\frac{1}{q\eta}\int_{0}^T\!\!e^{-2\eta z_0}\left(\sum_{|\alpha|\le 3}\|\mathcal{L}(D^\alpha\vp)(z_0,\cdot)\|^2_{L^2(\Omega)}+\sum_{|\alpha|\le 4}\|D^\alpha\vp(z_0,\cdot)\|^2_{L^2(\Omega)}\right)dz_0\nonumber\\
    &\quad+\frac{1}{q}\|\partial_{z_2}^2f|_{z_0=0}\|^2_{L^2(\Omega)}.
\end{align}
Since both $\mathcal{I}_2$ and $\mathcal{I}_3$ contain the time derivative $\partial_{z_0}$, by the argument similar to the one used for $\mathcal{I}_1$, we obtain
\begin{align}
    |\mathcal{I}_2|&+|\mathcal{I}_3|\nonumber\\
    &\lesssim q\left(\eta \int_{\Omega_T}e^{-2\eta z_0}\sum_{|\alpha|\le 4}\|D^\alpha\vp\|^2_{L^2(\Omega)}dz_0+e^{-2\eta T}\sum_{|\alpha|\le 4}\|D^\alpha\vp(T,\cdot,\cdot)\|^2_{L^2(\Omega)}\right)\nonumber\\
    &\quad+\frac{1}{q\eta}\int_0^T\!\!e^{-2\eta z_0}\left(\sum_{|\alpha|\le 4}\|D^\alpha\vp(z_0)\|^2_{L^2(\Omega)}+\sum_{|\alpha|\le 3}\|\mathcal{L}(D^\alpha\vp)(z_0)\|^2_{L^2(\Omega)}\right)dz_0\nonumber\\
    &\quad+\frac{1}{q\eta}\int_{\Omega_T}e^{-2\eta z_0}\left(\sum_{|\alpha|\le 3}\|\mathcal{L}(D^\alpha\vp)\|^2_{L^2(\Omega)}+\sum_{|\alpha|\le 4}\|D^\alpha\vp\|^2_{L^2(\Omega)}\right)dz_0\nonumber\\
    &\quad+\frac{1}{q}\|\partial_{z_2}^2f|_{z_0=0}\|^2_{L^2(\Omega)}.
\end{align}
To estimate $\mathcal{I}_4$, we can not directly use Gauss theorem and then integration by part with respect to $z_0$, since $\partial_{z_1}^2\partial_{z_2}^2\vp$ does not contain the time derivative $\partial_{z_0}$. Instead, we integrate with respect to $z_1$, since $\partial_{z_1}$ is also tangential to $\Gamma_2$. Actually, one has
\begin{align}
    |\mathcal{I}_4|&\le\underbrace{\left|\int_0^T\!\!\int_{\mathbb{R}^+}\partial_{z_1}\left( e^{-2\eta z_0}\frac{r_{11}}{r_{22}}\mathcal{L}(\partial_{z_0z_2}\vp) \partial_{z_1}\partial_{z_2}^2\vp\right)dz_1dz_0\right|}_{\mathcal{I}_{4,1}}\nonumber\\
    &\quad+ \underbrace{\left|\int_0^T\!\!\int_{\mathbb{R}^+}e^{-2\eta z_0}\frac{r_{11}}{r_{22}}\partial_{z_1}(\mathcal{L}\partial_{z_0z_2}\vp)\partial_{z_1}\partial_{z_2}^2\vp dz_1dz_0\right|}_{\mathcal{I}_{4,2}}\nonumber\\
    &\quad+\underbrace{\left|\int_0^T\!\!\int_{\mathbb{R}^+}e^{-2\eta z_0}\partial_{z_1}\left(\frac{r_{11}}{r_{22}}\right)\partial_{z_0}\partial_{z_2}^3\vp\partial_{z_1}\partial_{z_2}^2\vp dz_1dz_0\right|}_{\mathcal{I}_{4,3}}.
\end{align}
By Cauchy inequality and the trace theorem, it is easy to see that
\begin{align}
    \mathcal{I}_{4,1}&=\left| \int_0^T e^{-2\eta z_0}(\frac{r_{11}}{r_{22}})\partial_{z_0}\partial_{z_2}^3\vp \partial_{z_1}\partial_{z_2}^2\vp(z_0,0,0) dz_0\right|\nonumber\\
    &= \left|\int_0^Te^{-2\eta z_0}(\frac{r_{11}}{b_1r_{22}^2})(\partial_{z_0}\partial_{z_2}^3\vp)\mathcal{B}(\partial_{z_2}^2\vp)(z_0,0,0)dz_0\right|.\label{3.68}
\end{align}
where in the second equality, we have used $b_2(0,0)=0$.
It is clear that
\begin{align}
    \mathcal{B}(\partial_{z_2}^2\vp)=2(\partial_{z_2}b_1\partial_{z_1z_2}\vp+\partial_{z_2}b_2\partial_{z_2z_2}\vp)+\partial_{z_2z_2}b_1\partial_{z_1}\vp+\partial_{z_2z_2}b_2\partial_{z_2}\vp.\label{3.69}
\end{align}
From the boundary conditions
\[
\mathcal{B}\varphi|_{z_1=0}=0,\quad \partial_{z_2}\vp|_{z_2=0}=0, \quad\mbox{and}\quad b_2(0,0)=0,
\]
one can see that
\begin{equation}\label{3.70}
    \partial_{z_1z_2}\vp(z_0,0,0)=0\quad \mbox{and} \quad
    \partial_{z_1}\vp(z_0,0,0)=0.
\end{equation}
Combining \eqref{3.69} and \eqref{3.70}, one has
\[
\mathcal{B}(\partial_{z_2}^2\vp)(z_0,0,0)=2\partial_{z_2}b_2(0,0)\partial_{z_2z_2}\vp(z_0,0,0).
\]
By assumption $(\romannumeral3)$, we conclude that $\mathcal{B}(\partial_{z_2}^2\vp)(z_0,0,0)=0$. Hence we have $\mathcal{I}_{4,1}=0$.

Noticing that $\partial_{z_2}\vp$ and its tangential derivatives vanish on $\{z_2=0\}$, we have
\begin{align}
\mathcal{L}(\partial_{z_0z_2}\vp)|_{z_2=0}&=\left\{-[\partial_{z_0},\mathcal{L}]\partial_{z_2}\vp+\partial_{z_0}(\mathcal{L}(\partial_{z_2}\vp))\right\}\Big|_{z_2=0}\nonumber\\
&=-((2(\partial_{z_0}r_{02})\partial_{z_0}\partial_{z_2}+2(\partial_{z_0}r_{12})\partial_{z_1z_2}+(\partial_{z_0}r_{22})\partial_{z_2}^2))\partial_{z_2}\vp)|_{z_2=0}\nonumber\\
&\quad-\partial_{z_0}r_2\partial_{z_2z_2}\vp|_{z_2=0}+\partial_{z_0}(\mathcal{L}(\partial_{z_2}\vp))|_{z_2=0}.
\end{align}
So by integrating by part with respect to $z_0$ and recalling \eqref{3-59} and \eqref{3-60}, we have
\begin{align}
    \mathcal{I}_{4,2}&\le\underbrace{\left|\int_{0}^T\!\!\!\int_{\mathbb{R}^+}2e^{-2\eta z_0}\frac{r_{11}}{r_{22}}\left((\partial_{z_0z_1}r_{02})\partial_{z_0}\partial_{z_2}^2\vp+(\partial_{z_0}r_{02})\partial_{z_0z_1}\partial_{z_2}^2\vp \right)\partial_{z_1}\partial_{z_2}^2\vp dz_1dz_0\right|}_{\mathcal{J}_1}\nonumber\\
    &\quad+\underbrace{ \left|\int_0^T\int_{\mathbb{R}^+}2e^{-2\eta z_0}\frac{r_{11}}{r_{22}}\left((\partial_{z_0z_1}r_{12})\partial_{z_1}\partial_{z_2}^2\vp+(\partial_{z_0}r_{12})\partial_{z_1}^2\partial_{z_2}^2\vp \right)\partial_{z_1}\partial_{z_2}^2\vp dz_1dz_0\right|}_{\mathcal{J}_2}\nonumber\\
    &\quad+\underbrace{\left|\int_0^T\int_{\mathbb{R}^+}2e^{-2\eta z_0}\frac{r_{11}}{r_{22}}\left((\partial_{z_0z_1}r_{22})\partial_{z_2}^3\vp+(\partial_{z_0}r_{22})\partial_{z_1}\partial_{z_2}^3\vp \right)\partial_{z_1}\partial_{z_2}^2\vp dz_1dz_0\right|}_{\mathcal{J}_3}\nonumber\\
    &\quad+\underbrace{\left|\int_0^T\int_{\mathbb{R}^+}2e^{-2\eta z_0}\frac{r_{11}}{r_{22}}\left((\partial_{z_0z_1}r_{2})\partial_{z_2}^2\vp+(\partial_{z_0}r_{2})\partial_{z_1}\partial_{z_2}^2\vp \right)\partial_{z_1}\partial_{z_2}^2\vp dz_1dz_0\right|}_{\mathcal{J}_4}\nonumber\\
    &\quad+\underbrace{\left|\int_0^T\int_{\mathbb{R}^+}2e^{-2\eta z_0}\frac{r_{11}}{r_{22}}\left(\partial_{z_0z_1}\mathcal{L}(\partial_{z_2}\vp \right)\partial_{z_1}\partial_{z_2}^2\vp dz_1dz_0\right|}_{\mathcal{J}_5}.
\end{align}

We estimate $\mathcal{J}_i$ term by term.
By the Gauss theorem, it is clear that
\begin{align}
	\mathcal{J}_1&=\left|\int_{\Omega_T}\partial_{z_2}\left(2e^{-2\eta z_0}\frac{r_{11}}{r_{22}}\left((\partial_{z_0z_1}r_{02})\partial_{z_0}\partial_{z_2}^2\vp+(\partial_{z_0}r_{02})\partial_{z_0z_1}\partial_{z_2}^2\vp \right)\partial_{z_1}\partial_{z_2}^2\vp \right)dzdz_0\right|\nonumber\\
	&\le \underbrace{\left|\int_{\Omega_T}\partial_{z_2}\left(2e^{-2\eta z_0}\frac{r_{11}}{r_{22}}(\partial_{z_0z_1}r_{02})\partial_{z_0}\partial_{z_2}^2\vp\partial_{z_1}\partial_{z_2}^2\vp\right)dzdz_0\right|}_{\mathcal{J}_{1,1}}\nonumber\\
	&\quad+\underbrace{\left|\int_{\Omega_T}\partial_{z_2}\left(2e^{-2\eta z_0}\frac{r_{11}}{r_{22}}(\partial_{z_0}r_{02})\partial_{z_0z_1}\partial_{z_2}^2\vp \partial_{z_1}\partial_{z_2}^2\vp \right)dzdz_0\right|}_{\mathcal{J}_{1,2}}.\label{3.75}
	\end{align}
By simple calculation we have	
	\begin{align}
	\mathcal{J}_{1,1}&\le\left|\int_{\Omega_T}2e^{-2\eta z_0}\partial_{z_2}\left(\frac{r_{11}}{r_{22}}\right)(\partial_{z_0z_1}r_{02})\partial_{z_0}\partial^2_{z_2}\vp\partial_{z_1}\partial_{z_2}^2\vp dzdz_0\right|\nonumber\\
	&\quad+\left|\int_{\Omega_T}2e^{-2\eta z_0}\frac{r_{11}}{r_{22}}(\partial_{z_0z_1z_2}r_{02})\partial_{z_0}\partial^2_{z_2}\vp\partial_{z_1}\partial_{z_2}^2\vp dzdz_0\right|\nonumber\\
	&\quad+\left|\int_{\Omega_T}2e^{-2\eta z_0}\frac{r_{11}}{r_{22}}(\partial_{z_0z_1}r_{02})\partial_{z_0}\partial^3_{z_2}\vp\partial_{z_1}\partial_{z_2}^2\vp dzdz_0\right|\nonumber\\
	&\quad+\left|\int_{\Omega_T}2e^{-2\eta z_0}\frac{r_{11}}{r_{22}}(\partial_{z_0z_1}r_{02})\partial_{z_0}\partial^2_{z_2}\vp\partial_{z_1}\partial_{z_2}^3\vp dzdz_0\right|.\nonumber
\end{align}
By the H\"{o}lder's inequality, one has
	\begin{align}
	\mathcal{J}_{1,1}&\lesssim \int_0^T e^{-2\eta z_0}\|\partial_{z_0z_1}r_{02}(z_0,\cdot)\|_{L^2}\|\partial_{z_0}\partial_{z_2}^2\vp(z_0,\cdot)\|_{L^4}\|\partial_{z_1}\partial_{z_2}^2\vp(z_0,\cdot)\|_{L^4}dz_0\nonumber\\	
	&\quad+\int_0^T e^{-2\eta z_0}\|\partial_{z_0z_1z_2}r_{02}(z_0,\cdot)\|_{L^2}\|\partial_{z_0}\partial_{z_2}^2\vp(z_0,\cdot)\|_{L^4}\|\partial_{z_1}\partial_{z_2}^2\vp(z_0,\cdot)\|_{L^4}dz_0\nonumber\\
	&\quad+\int_0^T e^{-2\eta z_0}\|\partial_{z_0z_1}r_{02}(z_0,\cdot)\|_{L^4}\|\partial_{z_0}\partial_{z_2}^3\vp(z_0,\cdot)\|_{L^2}\|\partial_{z_1}\partial_{z_2}^2\vp(z_0,\cdot)\|_{L^4}dz_0\nonumber\\
	&\quad+\int_0^T e^{-2\eta z_0}\|\partial_{z_0z_1}r_{02}(z_0,\cdot)\|_{L^2}\|\partial_{z_0}\partial_{z_2}^2\vp(z_0,\cdot)\|_{L^4}\|\partial_{z_1}\partial_{z_2}^3\vp(z_0,\cdot)\|_{L^2}dz_0.\label{3.76}
	\nonumber\\
	&\lesssim \delta \int_0^T e^{-2\eta z_0}\left(\|\partial_{z_0}\vp(z_0,\cdot)\|^2_{H^3(\Omega)}+\|\vp(z_0,\cdot)\|^2_{H^4(\Omega)}\right)dz_0,
\end{align}
where in the last inequality, we have used the Sobolev embedding $H^1(\Omega)\hookrightarrow L^4(\Omega)$ and assumption $(\romannumeral2)$.
Similarly, one deduces that
\begin{align}
	\mathcal{J}_{1,2}&\lesssim \delta\int_0^T e^{-2\eta z_0}\left(\|\partial_{z_0}\vp(z_0,\cdot)\|^2_{H^3(\Omega)}+\|\vp(z_0,\cdot)\|^2_{H^4(\Omega)}\right)dz_0\nonumber\\
	&\quad+\underbrace{\left|\int_{\Omega_T}e^{-2\eta z_0}\partial_{z_0}r_{02}\partial_{z_0z_1}\partial_{z_2}^3\vp\partial_{z_1}\partial_{z_2}^2\vp dzdz_0\right|}_{\mathcal{J}^0_{1,2}}.\label{3.77}
	\end{align}
Integrating by part with respect to $z_0$, one has
\begin{align}	
\mathcal{J}^0_{1,2}&\lesssim \left|\int_{\Omega_T}\partial_{z_0}\left(e^{-2\eta z_0}\partial_{z_0}r_{02}\partial_{z_1}\partial_{z_2}^3\vp\partial_{z_1}\partial_{z_2}^2\vp\right) dzdz_0\right|\nonumber\\
	&\quad+\eta\left|\int_{\Omega_T}e^{-2\eta z_0}\partial_{z_0}r_{02}\partial_{z_1}\partial_{z_2}^3\vp\partial_{z_1}\partial_{z_2}^2\vp dzdz_0\right| \nonumber\\
	&\quad+\left|\int_{\Omega_T}e^{-2\eta z_0}\partial_{z_0}r_{02}\partial_{z_1}\partial_{z_2}^3\vp\partial_{z_0z_1}\partial_{z_2}^2\vp dzdz_0\right|\nonumber\\
	&\quad+\left|\int_{\Omega_T}e^{-2\eta z_0}\partial_{z_0}^2r_{02}\partial_{z_1}\partial_{z_2}^3\vp\partial_{z_1}\partial_{z_2}^2\vp dzdz_0\right|.\nonumber
	\end{align}
By the Cauchy inequality, one deduces
\begin{align}
	\mathcal{J}_{1,2}^0&\lesssim e^{-2\eta T}\left(q\|\partial_{z_1}\partial_{z_2}^3\vp(T,\cdot)\|^2_{L^2(\Omega)}+\frac{1}{q}\|\partial_{z_1}\partial_{z_2}^2\vp(T,\cdot)\|^2_{L^2(\Omega)}\right)+\|\vp_0\|^2_{H^4(\Omega)}\nonumber\\
	&\quad+\eta \left(\int_0^T e^{-2\eta z_0}\left(q\|\partial_{z_1}\partial_{z_2}^3\vp(z_0,\cdot)\|^2_{L^2(\Omega)}+\frac{1}{q}\|\partial_{z_1}\partial_{z_2}^2\vp(z_0,\cdot)\|^2_{L^2(\Omega)} \right)dz_0\right)\nonumber\\
	&\quad+\int_0^T e^{-2\eta z_0}\left(\|\partial_{z_1}\partial_{z_2}^3(z_0,\cdot)\|^2_{L^2(\Omega)}+\|\partial_{z_0z_1}\partial_{z_2}^2\vp(z_0,\cdot)\|^2_{L^2(\Omega)}\right)dz_0\nonumber\\
	&\quad+\delta \int_0^T\|\partial_{z_1}\partial_{z_2}^3\vp(z_0,\cdot)\|^2_{L^2(\Omega)}+\|\partial_{z_1}\partial_{z_2}^2\vp(z_0,\cdot)\|^2_{L^2(\Omega)}dz_0.\label{3.78}
\end{align}
Combining \eqref{3.75}-\eqref{3.78}, we obtain
\begin{align}
	\mathcal{J}_1&\lesssim  (\delta+1) \int_0^T e^{-2\eta z_0}\left(\|\partial_{z_0}\vp(z_0,\cdot)\|^2_{H^3(\Omega)}+\|\vp(z_0,\cdot)\|^2_{H^4(\Omega)}\right)dz_0\nonumber\\
	&\quad+(q\eta+1)\int_0^T \|\partial_{z_1}\partial_{z_2}^3\vp(z_0,\cdot)\|^2_{L^2}dz_0+q e^{-2\eta T}\|\partial_{z_1}\partial_{z_2}^3\vp(T,\cdot)\|^2_{L^2(\Omega)}\nonumber\\
	&\quad+\frac{1}{q}\left(\eta\int_0^T \|\partial_{z_1}\partial_{z_2}^2\vp(z_0,\cdot)\|^2_{L^2}dz_0+e^{-2\eta T}\|\partial_{z_1}\partial_{z_2}^2\vp(T,\cdot)\|^2_{L^2(\Omega)}\right).
\end{align}
For $\mathcal{J}_2$, we have
\begin{align}
\mathcal{J}_2&=
 \left|\int_0^T\int_{\mathbb{R}^+}2e^{-2\eta z_0}\frac{r_{11}}{r_{22}}\left((\partial_{z_0z_1}r_{12})\partial_{z_1}\partial_{z_2}^2\vp+(\partial_{z_0}r_{12})\partial_{z_1}^2\partial_{z_2}^2\vp \right)\partial_{z_1}\partial_{z_2}^2\vp dz_1dz_0\right|\nonumber\\
 &\lesssim \underbrace{\left|\int_0^T\int_{\mathbb{R}^+}2e^{-2\eta z_0}\frac{r_{11}}{r_{22}}(\partial_{z_0z_1}r_{12})\partial_{z_1}\partial_{z_2}^2\vp \partial_{z_1}\partial_{z_2}^2\vp dz_1dz_0\right|}_{\mathcal{J}_{2,1}}\nonumber\\
 &\quad+\underbrace{\left|\int_0^T\int_{\mathbb{R}^+}2e^{-2\eta z_0}\frac{r_{11}}{r_{22}}(\partial_{z_0}r_{12})\partial_{z_1}^2\partial_{z_2}^2\vp \partial_{z_1}\partial_{z_2}^2\vp dz_1dz_0\right|}_{\mathcal{J}_{2,2}}.
\end{align}
By the Gauss theorem and assumption $(\romannumeral2)$, it is not difficult to deduce that
\begin{align}
	\mathcal{J}_{2,1}&=\left|\int_{\Omega_T}\partial_{z_2}\left(2e^{-2\eta z_0}\frac{r_{11}}{r_{22}}(\partial_{z_0z_1}r_{12})\partial_{z_1}\partial_{z_2}^2\vp \partial_{z_1}\partial_{z_2}^2\vp\right) dzdz_0\right|\nonumber\\
	&\lesssim \int_0^T e^{-2\eta z_0}\|\partial_{z_0z_1}r_{12}(z_0,\cdot)\|_{L^4(\Omega)}\|\partial_{z_1}\partial_{z_2}^2\vp(z_0,\cdot)\|_{L^4(\Omega)}^2dz_0\nonumber\\
	&\quad+\int_0^T e^{-2\eta z_0}\|\partial_{z_0z_1z_2}r_{12}(z_0,\cdot)\|_{L^2(\Omega)}\|\partial_{z_1}\partial_{z_2}^2\vp(z_0,\cdot)\|_{L^4(\Omega)}^2dz_0\nonumber\\
	&\quad+\int_0^T e^{-2\eta z_0}\|\partial_{z_0z_1}r_{12}(z_0,\cdot)\|_{L^4(\Omega)}\|\partial_{z_1}\partial_{z_2}^3\vp(z_0,\cdot)\|_{L^2(\Omega)}\|\partial_{z_1}\partial_{z_2}^2\vp(z_0,\cdot)\|_{L^4(\Omega)}dz_0\nonumber\\
	&\lesssim \delta\int_0^T e^{-2\eta z_0}\|\partial_{z_1}\partial_{z_2}^3\vp(z_0,\cdot)\|^2_{H^1(\Omega)}dz_0,
\end{align}
where the Sobolev embedding $H^1(\Omega)$ $\hookrightarrow$ $L^4(\Omega)$ is used in the last inequality.
For $\mathcal{J}_{2,2}$, one has
\begin{align}
	\mathcal{J}_{2,2}&=\left|\int_0^T\int_{\mathbb{R}^+}e^{-2\eta z_0}\frac{r_{11}}{r_{22}}(\partial_{z_0}r_{12})\partial_{z_1}\left(\left|\partial_{z_1}\partial_{z_2}^2\vp\right|^2\right) dz_1dz_0\right|\nonumber\\
	&\le\underbrace{\left|\int_0^T\int_{\mathbb{R}^+}\partial_{z_1}\left(e^{-2\eta z_0}\frac{r_{11}}{r_{22}}(\partial_{z_0}r_{12})\left|\partial_{z_1}\partial_{z_2}^2\vp\right|^2\right)dz_1dz_0\right|}_{\mathcal{J}_{2,2}^1}\nonumber\\
	&\quad+ \underbrace{\left|\int_0^T\int_{\mathbb{R}^+}e^{-2\eta z_0}\partial_{z_1}\left(\frac{r_{11}}{r_{22}}\right)(\partial_{z_0}r_{12})\left|\partial_{z_1}\partial_{z_2}^2\vp\right|^2dz_1dz_0\right|}_{\mathcal{J}_{2,2}^2}\nonumber\\
	&\quad+\underbrace{\left|\int_0^T\int_{\mathbb{R}^+}e^{-2\eta z_0}\frac{r_{11}}{r_{22}}(\partial_{z_0z_1}r_{12})\left|\partial_{z_1}\partial_{z_2}^2\vp\right|^2dz_1dz_0\right|}_{\mathcal{J}_{2,2}^3}.
	\end{align}
We claim $\mathcal{J}_{2,2}^1=0$. Indeed, applying $\partial_{z_2}^2$ to the boundary condition $\mathcal{B}\vp(z_0,0,z_2)=0$ and letting $z_2=0$, one has
\begin{align}
	b_1\partial_{z_1}\partial_{z_2}^2\vp(z_0,0,0)=(\mathcal{B}\partial_{z_2}^2\vp)(z_0,0,0)-b_2\partial_{z_2}^3\vp(z_0,0,0).\label{3.83}
\end{align}
By \eqref{3.69}, \eqref{3.70}, and assumption ($\romannumeral3$), it is clear that $(\mathcal{B}\partial_{z_2}^2\vp)(z_0,0,0)=0$, $b_2(0,0)=0$, and $b_1(0,0)=-1\neq 0$. This together with \eqref{3.83} imply that $\partial_{z_1}\partial_{z_2}^2\vp(z_0,0,0)=0$, hence our claim holds.
For $\mathcal{J}_{2,2}^2$, by the trace theorem and assumptions $(\romannumeral1)$ and $(\romannumeral2)$, one obtains
\begin{align}
	\mathcal{J}_{2,2}^2\lesssim \int_0^T e^{-2\eta z_0}\|\partial_{z_1}\partial_{z_2}^2\vp(z_0,\cdot)\|^2_{H^1(\Omega)}dz_0.
\end{align}
By the Gauss theorem, the Sobolev embedding theorem, and assumption $(\romannumeral2)$, we deduce that
\begin{align}
\mathcal{J}_{2,2}^3	&= \int_{\Omega_T} \partial_{z_2}\left( e^{-2\eta z_0}\frac{r_{11}}{r_{22}}\partial_{z_0z_1}r_{12}\left|\partial_{z_1}\partial_{z_2}^2\vp\right|^2\right)dzdz_0\nonumber\\
	&\lesssim \int_0^T e^{-2\eta z_0}\|\partial_{z_0z_1}r_{12}(z_0,\cdot)\|_{L^2(\Omega)}\|\partial_{z_1}\partial_{z_2}^2\vp(z_0,\cdot)\|^2_{L^4(\Omega)} dz_0\nonumber\\
	&\quad+\int_0^T e^{-2\eta z_0}\|\partial_{z_0z_1z_2}r_{12}(z_0,\cdot)\|_{L^2(\Omega)}\|\partial_{z_1}\partial_{z_2}^2\vp(z_0,\cdot)\|^2_{L^4(\Omega)}dz_0\nonumber\\
	&\quad+\int_0^T e^{-2\eta z_0}\|\partial_{z_0z_1}r_{12}(z_0,\cdot)\|_{L^2(\Omega)}\|\partial_{z_1}\partial_{z_2}^2\vp(z_0,\cdot)\|_{L^4(\Omega)}\|\partial_{z_1}\partial_{z_2}^3\vp(z_0,\cdot)\|_{L^2(\Omega)}dz_0\nonumber\\
	&\lesssim \delta \int_0^Te^{-2\eta z_0}\|\partial_{z_1}\partial_{z_2}^2(z_0,\cdot)\|^2_{H^1(\Omega)} dz_0.\label{3.85}
	\end{align}
	Hence we conclude that
	\begin{align}
		\mathcal{J}_{2}\lesssim (\delta+1)\int_0^Te^{-2
		\eta z_0}\|\partial_{z_1}\partial_{z_2}^2\vp(z_0,\cdot)\|^2_{H^1(\Omega)}dz_0.
	\end{align}
For $\mathcal{J}_{3}$, one has
\begin{align}
	\mathcal{J}_3&=\left|\int_0^T\int_{\mathbb{R}^+}2e^{-2\eta z_0}\frac{r_{11}}{r_{22}}\left((\partial_{z_0z_1}r_{22})\partial_{z_2}^3\vp+(\partial_{z_0}r_{22})\partial_{z_1}\partial_{z_2}^3\vp \right)\partial_{z_1}\partial_{z_2}^2\vp dz_1dz_0\right|\nonumber\\
	&\le \underbrace{\left|\int_0^T\int_{\mathbb{R}^+}2e^{-2\eta z_0}\frac{r_{11}}{r_{22}}(\partial_{z_0z_1}r_{22})\partial_{z_2}^3\vp \partial_{z_1}\partial_{z_2}^2\vp dz_1dz_0\right|}_{J_{3,1}}\nonumber\\
	&\quad+\underbrace{\left|\int_0^T\int_{\mathbb{R}_+}2 e^{-2\eta z_0}(\partial_{z_0}r_{22})\partial_{z_1}\partial_{z_2}^3\vp \partial_{z_1}\partial_{z_2}^2\vp dz_1dz_0\right|}_{\mathcal{J}_{3,2}}.\label{3.87}
\end{align}
By the Gauss theorem and the H{\"o}lder inequality, it is not difficult to see that
\begin{align}
	\mathcal{J}_{3,1}&\lesssim \int_0^T e^{-2\eta z_0}\|\partial_{z_0z_1}r_{22}(z_0,\cdot)\|_{L^2(\Omega)}\|\partial_{z_2}^3\vp(z_0,\cdot)\|_{L^4(\Omega)}\|\partial_{z_1}\partial_{z_2}^2\vp(z_0,\cdot)\|_{L^4(\Omega)}dz_0\nonumber\\
	&\quad+ \int_0^T e^{-2\eta z_0}\|\partial_{z_0z_1z_2}r_{22}(z_0,\cdot)\|_{L^2(\Omega)}\|\partial_{z_2}^3\vp(z_0,\cdot)\|_{L^4(\Omega)}\|\partial_{z_1}\partial_{z_2}^2\vp(z_0,\cdot)\|_{L^4(\Omega)}dz_0\nonumber\\
	&\quad+ \int_0^T e^{-2\eta z_0}\|\partial_{z_0z_1}r_{22}(z_0,\cdot)\|_{L^2(\Omega)}\|\partial_{z_2}^4\vp(z_0,\cdot)\|_{L^2(\Omega)}\|\partial_{z_1}\partial_{z_2}^2\vp(z_0,\cdot)\|_{L^4(\Omega)}dz_0\nonumber\\
	 &\quad+\int_0^T e^{-2\eta z_0}\|\partial_{z_0z_1}r_{22}(z_0,\cdot)\|_{L^2(\Omega)}\|\partial_{z_2}^3\vp(z_0,\cdot)\|_{L^4(\Omega)}\|\partial_{z_1}\partial_{z_2}^3\vp(z_0,\cdot)\|_{L^2(\Omega)}dz_0\nonumber\\
	 &\lesssim \delta\int_0^T e^{-2\eta z_0}\left(\|\partial_{z_2}^3\vp(z_0,\cdot)\|^2_{H^1(\Omega)}+\|\partial_{z_1}\partial_{z_2}^2\vp(z_0,\cdot)\|^2_{H^1(\Omega)}\right)dz_0.\label{3.88}
\end{align}
For $\mathcal{J}_{3,2}$, the Gauss theorem cannot be used directly, since it contains fourth order derivative $\partial_{z_1}\partial_{z_2}^3\vp$ on boundary $\{z_2=0\}$.
Therefore, we use \eqref{3-59} to replace $\!\ \partial_{z_1}\partial_{z_2}^3\vp$ in $\mathcal{J}_{3,2}$ and then apply the trace theorem and the Cauchy inequality to derive the estimate.
In fact, one has
\begin{align}
	\mathcal{J}_{3,2}&=\left|\int_0^T\int_{\mathbb{R}_+}2 e^{-2\eta z_0}\frac{\partial_{z_0}r_{22}}{r_{22}}\mathcal{L}(\partial_{z_1z_2}\vp)\partial_{z_1}\partial_{z_2}^2\vp dz_1dz_0\right|\nonumber\\
	&\lesssim \int_0^Te^{-2\eta z_0}\|\partial_{z_0z_2}r_{22}(z_0,\cdot)\|_{L^4(\Omega)}\|\mathcal{L}(\partial_{z_1z_2}\vp)(z_0,\cdot)\|_{L^2(\Omega)}\|\partial_{z_1}\partial_{z_2}^2\vp(z_0,\cdot)\|_{L^2(\Omega)}dz_0\nonumber\\
	&\quad+\int_0^Te^{-2\eta z_0}\|\mathcal{L}(\partial_{z_1z_2}\vp)(z_0,\cdot)\|_{L^2(\Omega)}\|\partial_{z_1}\partial_{z_2}^2\vp(z_0,\cdot)\|_{L^2(\Omega)}dz_0\nonumber\\
	&\quad+ \int_0^T e^{-2\eta z_0}\|\partial_{z_2}\mathcal{L}(\partial_{z_1z_2}\vp)(z_0,\cdot)\|_{L^2(\Omega)}\|\partial_{z_1}\partial_{z_2}^2\vp(z_0,\cdot)\|_{L^2(\Omega)}dz_0\nonumber\\
		&\quad+ \int_0^Te^{-2\eta z_0}\|\mathcal{L}(\partial_{z_1z_2}\vp)(z_0,\cdot)\|_{L^2(\Omega)}\|\partial_{z_1}\partial_{z_2}^3\vp(z_0,\cdot)\|_{L^2(\Omega)}dz_0.\nonumber
		\end{align}
By the Cauchy inequality and assumption $(\romannumeral2)$, we have
		\begin{align}
		\mathcal{J}_{3,2}&\lesssim (\delta+1)\int_0^T e^{-2\eta z_0}\left(\|\partial_{z_0}\vp(z_0,\cdot)\|_{H^3(\Omega)}+\|\partial_{z_0}^2\vp(z_0,\cdot)\|_{H^2(\Omega)}\right)\|\vp(z_0,\cdot)\|_{H^4(\Omega)}dz_0\nonumber\\
		&\quad+\int_0^T e^{-2\eta z_0}\left(q\eta \|\partial_{z_1}\partial_{z_2}^2\vp(z_0,\cdot)\|^2_{L^2(\Omega)}+\frac{1}{ q\eta}\|\partial_{z_2}\mathcal{L}(\partial_{z_1z_2}\vp)(z_0,\cdot)\|^2\right)dz_0\nonumber\\
		&\lesssim \left(\delta+1+q\eta+\frac{1}{q\eta }\right)\sum_{|\alpha|\le 4}\int_0^T e^{-2\eta z_0}\|D^\alpha \vp(z_0,\cdot )\|^2_{L^2(\Omega)} dz_0\nonumber\\
		&\quad+\frac{1}{ q\eta}\sum_{|\alpha|\le 3}\int_0^T e^{-2\eta z_0}\mathcal{L}(D^\alpha\vp)(z_0,\cdot)\|^2dz_0.\label{3.89}
\end{align}
Combining \eqref{3.87}-\eqref{3.89}, one deduces that
\begin{align}
	\mathcal{J}_3	&\lesssim \left(\delta+1+q\eta+\frac{1}{q\eta }\right)\sum_{|\alpha|\le 4}\int_0^T e^{-2\eta z_0}\|D^\alpha \vp(z_0,\cdot )\|^2_{L^2(\Omega)} dz_0\nonumber\\
	&\quad+\frac{1}{ q\eta}\sum_{|\alpha|\le 3}\int_0^T e^{-2\eta z_0}\|\mathcal{L}(D^\alpha\vp)(z_0,\cdot)\|^2_{L^2(\Omega)}dz_0.\label{3.90}
\end{align}
Noticing that $\mathcal{J}_4$ contains no derivatives higer than third order, it can be estimated easily by the Gauss theorem, the trace theorem, and assumption $(\romannumeral2)$. In fact, we have
\begin{align}
	\mathcal{J}_4&=\left|\int_0^T\int_{\mathbb{R}^+}2e^{-2\eta z_0}\frac{r_{11}}{r_{22}}\left((\partial_{z_0z_1}r_{2})\partial_{z_2}^2\vp+(\partial_{z_0}r_{2})\partial_{z_1}\partial_{z_2}^2\vp \right)\partial_{z_1}\partial_{z_2}^2\vp dz_1dz_0\right|\nonumber\\
    &\lesssim (\delta+1)\sum_{|\alpha|\le 4}\int_0^T e^{-2\eta z_0}\|D^\alpha\vp(z_0,\cdot)\|^2_{L^2(\Omega)}dz_0.
\end{align}
For $\mathcal{J}_5$, by the Gauss theorem, one has
\begin{align}
	\mathcal{J}_5&=\left|\int_0^T\int_{\mathbb{R}^+}2e^{-2\eta z_0}\frac{r_{11}}{r_{22}}\left(\partial_{z_0z_1}\mathcal{L}(\partial_{z_2}\vp \right)\partial_{z_1}\partial_{z_2}^2\vp dz_1dz_0\right|\nonumber\\
&=\left|\int_{\Omega_T}\partial_{z_2}\left(2e^{-2\eta z_0}\frac{r_{11}}{r_{22}}\left(\partial_{z_0z_1}\mathcal{L}(\partial_{z_2}\vp \right)\partial_{z_1}\partial_{z_2}^2\vp\right) dzdz_0\right|\nonumber\\
&=\underbrace{\left|\int_{\Omega_T}2e^{-2\eta z_0}\partial_{z_2}\left(\frac{r_{11}}{r_{22}}\right)\left(\partial_{z_0z_1}\mathcal{L}(\partial_{z_2}\vp)\right) \partial_{z_1}\partial_{z_2}^2\vp dzdz_0\right|}_{\mathcal{J}_{5,1}}\nonumber\\
&\quad+\underbrace{\left|\int_{\Omega_T}2 e^{-2\eta z_0}\left(\frac{r_{11}}{r_{22}}\right)\left(\partial_{z_0z_1z_2}\mathcal{L}(\partial_{z_2}\vp)\right) \partial_{z_1}\partial_{z_2}^2\vp dzdz_0\right|}_{\mathcal{J}_{5,2}}\nonumber\\
&\quad+\underbrace{\left|\int_{\Omega_T}2 e^{-2\eta z_0}\left(\frac{r_{11}}{r_{22}}\right)\left(\partial_{z_0z_1}\mathcal{L}(\partial_{z_2}\vp)\right) \partial_{z_1}\partial_{z_2}^3\vp dzdz_0\right|}_{\mathcal{J}_{5,3}}.
\end{align}
By the H{\"o}lder inequality, one has
\begin{align}
	\mathcal{J}_{5,1}&\lesssim \int_0^T e^{-2\eta z_0}\|(\partial_{z_0z_1}\mathcal{L}(\partial_{z_2}\vp))(z_0,\cdot)\|_{L^2(\Omega)}\|\partial_{z_1}\partial_{z_2}^2\vp(z_0,\cdot)\|_{L^2(\Omega)}dz_0\nonumber\\
	&\lesssim  (\delta+1+q\eta )\int_0^T e^{-2\eta z_0}\sum_{|\alpha|\le 4}\|D^\alpha \vp(z_0,\cdot)\|^2_{L^2(\Omega)}dz_0\nonumber\\
	&\quad+\frac{1}{q\eta }\int_0^T e^{-2\eta z_0}\|\mathcal{L}(\partial_{z_0z_1z_2}\vp)(z_0,\cdot)\|^2_{L^2(\Omega)} dz_0.
\end{align}
where we have used assumption $(\romannumeral2)$ and the following fact:
\begin{align}
	\partial_{z_kz_\ell}(\mathcal{L}\partial_{z_2}\vp)&=(\partial_{z_k}\mathcal{L})\partial_{z_\ell z_2}\vp+(\partial_{z_\ell}\mathcal{L})\partial_{z_k z_2}\vp+(\partial_{z_kz_\ell}\mathcal{L})\partial_{z_2}\vp+\mathcal{L}(\partial_{z_k z_\ell z_2}\vp),\label{3.94}
\end{align}
where
\begin{align}
\partial_{z_k}\mathcal{L}&\defs \sum_{i,j=0}^2\partial_{z_k}r_{ij}\partial_{z_iz_j},\label{3.95}\\
\partial_{z_kz_\ell}\mathcal{L}&\defs \sum_{i,j=0}^2\partial_{z_kz_\ell}r_{ij}\partial_{z_iz_j}.\label{3.96}
\end{align}

Integrating by parts with respect to $z_0$, we have
\begin{align}
\mathcal{J}_{5,2}&\le\underbrace{\left|\int_{\Omega_T}\partial_{z_0}\left(2 e^{-2\eta z_0}\left(\frac{r_{11}}{r_{22}}\right)\left(\partial_{z_1z_2}\mathcal{L}(\partial_{z_2}\vp)\right) \partial_{z_1}\partial_{z_2}^2\vp\right) dzdz_0\right|}_{\mathcal{J}_{5,2}^1}\nonumber\\
&\quad+\underbrace{2\eta \left|\int_{\Omega_T}2 e^{-2\eta z_0}\left(\frac{r_{11}}{r_{22}}\right)\left(\partial_{z_1z_2}\mathcal{L}(\partial_{z_2}\vp) \partial_{z_1}\partial_{z_2}^2\vp\right) dzdz_0\right|}_{\mathcal{J}_{5,2}^2}\nonumber\\
&\quad+\underbrace{\left|\int_{\Omega_T}2 e^{-2\eta z_0}\partial_{z_0}\left(\frac{r_{11}}{r_{22}}\right)\left(\partial_{z_1z_2}\mathcal{L}(\partial_{z_2}\vp) \partial_{z_1}\partial_{z_2}^2\vp\right)dzdz_0\right|}_{\mathcal{J}_{5,2}^3}\nonumber\\
&\quad+\underbrace{\left|\int_{\Omega_T}2 e^{-2\eta z_0}\left(\frac{r_{11}}{r_{22}}\right)\left(\partial_{z_1z_2}\mathcal{L}(\partial_{z_2}\vp) \partial_{z_0z_1}\partial_{z_2}^2\vp\right)dzdz_0\right|}_{\mathcal{J}_{5,2}^4}.\label{3.97}
\end{align}
It is clear that
\begin{align}
\mathcal{J}_{5,2}^1&\lesssim e^{-2\eta T}\int_{\Omega}\left| \partial_{z_1z_2}\mathcal{L}(\partial_{z_2}\vp) \partial_{z_1}\partial_{z_2}^2\vp\big|_{z_0=T}\right| dz
+\|f|_{z_0=0}\|^2_{H^3(\Omega)}\nonumber\\
	&\lesssim e^{-2\eta T}\left((q+\delta)\sum_{|\alpha|\le 4}\|D^\alpha \vp(T,\cdot)\|^2_{L^2(\Omega)}+\frac{1}{q}\|\partial_{z_1}\partial_{z_2}^2(T,\cdot)\|^2\right)\nonumber\\ &\quad+e^{-2\eta T}\left(q\eta^2\|\partial_{z_1}\partial_{z_2}^2\vp(T,\cdot)\|^2_{L^2(\Omega)}+\frac{1}{q\eta^2}\|\mathcal{L}(\partial_{z_1}\partial_{z_2}^2\vp)(T,\cdot)\|^2_{L^2(\Omega)}\right)\nonumber\\
	&\quad+\|f|_{z_0=0}\|^2_{H^3(\Omega)}\nonumber\\
	&\lesssim e^{-2\eta T}\left((q+\delta)\sum_{|\alpha|\le 4}\|D^\alpha \vp(T,\cdot)\|^2_{L^2(\Omega)}+\frac{1}{q\eta^2}\|\mathcal{L}(\partial_{z_1}\partial_{z_2}^2\vp)(T,\cdot)\|^2_{L^2(\Omega)}\right)\nonumber\\
	&\quad+\left(q\eta +\frac{1}{q\eta}\right)\int_0^T e^{-2\eta z_0}\|\partial_{z_0z_1}\partial_{z_2}^2\vp(z_0,\cdot)\|^2_{L^2(\Omega)}dz_0\nonumber\\
	&\quad+ \|f|_{z_0=0}\|^2_{H^3(\Omega)}.\label{3.98}
\end{align}
In the above estimates, the Cauchy inequality and inequality \eqref{eq:low norm bounded by higher norm} are empoyed, and $\delta$ comes from the $L^4$-norm (which can be bounded by the $H^1$-norm) of $D^2r_{ij}$. 
By a similar argument, we can also deduce that
\begin{align}
	\mathcal{J}^2_{5,2}&\lesssim \eta\int_0^T e^{-2\eta z_0}\left(\frac{1}{q\eta^2}\|\mathcal{L}(\partial_{z_1}\partial_{z_2}^2\vp)(z_0,\cdot)\|^2+q\eta^2\|\partial_{z_1}\partial_{z_2}^2\vp(z_0,\cdot)\|^2_{L^2(\Omega)}\right)dz_0\nonumber\\
	&\quad+\eta \int_0^T\left(\frac{1}{q}\|\partial_{z_1}\partial_{z_2}^2\vp(z_0,\cdot)\|^2_{L^2(\Omega)}+(q+\delta)\sum_{|\alpha|\le 4}\|D^\alpha\vp(z_0,\cdot)\|^2_{L^2(\Omega)}\right)dz_0\nonumber\\
	&\lesssim  \int_0^T e^{-2\eta z_0}\left(\frac{1}{q\eta}\|\mathcal{L}(\partial_{z_1}\partial_{z_2}^2\vp)(z_0,\cdot)\|^2_{L^2(\Omega)}+\eta(q+\delta)\sum_{|\alpha|\le 4}\|D^\alpha\vp(z_0,\cdot)\|^2_{L^2(\Omega)}\right)dz_0\nonumber\\
	&\quad+\frac{1}{q\eta}\int_0^T e^{-2\eta z_0}\sum_{|\alpha|\le 4}\|D^\alpha\vp(z_0,\cdot)\|^2_{L^2(\Omega)}dz_0.
\label{3.99}
\end{align}
By the Cauchy inequality, it is not difficult to see that
\begin{align}
&\mathcal{J}_{5,2}^3+\mathcal{J}_{5,2}^4\nonumber\\
	&\lesssim \int_0^T e^{-2\eta z_0}\left(\frac{1}{q\eta} \|\mathcal{L}(\partial_{z_1}\partial_{z_2}^2\vp)(z_0,\cdot)\|^2_{L^2(\Omega)}+q\eta \|\partial_{z_0z_1}\partial_{z_2}^2\vp(z_0,\cdot)\|^2_{L^2(\Omega)}\right)dz_0.\nonumber\\
	&\quad+\delta \int_0^T e^{-2\eta z_0}\sum_{|\alpha|\le 4}\|D^\alpha\vp(z_0,\cdot)\|^2_{L^2(\Omega)}dz_0.\label{3.100}
\end{align}
Combining \eqref{3.97}-\eqref{3.100}, we are able to conclude that
\begin{align}
	\mathcal{J}_5&\lesssim \frac{1}{q\eta}\int_0^T e^{-2\eta z_0} \sum_{|\alpha|\le 3}\|\mathcal{L}(D^\alpha\vp)(z_0,\cdot)\|^2_{L^2(\Omega)}dz_0\nonumber\\
	&\quad+\left(\delta+(q+\delta)\eta+\frac{1}{q\eta}+1\right)\int_0^T e^{-2\eta z_0} \sum_{|\alpha|\le 4}\|D^\alpha\vp(z_0,\cdot)\|^2_{L^2(\Omega)}dz_0\nonumber\\
	&\quad+e^{-2\eta T}\left((q+\delta)\sum_{|\alpha|\le 4}\|D^\alpha \vp(T,\cdot)\|^2_{L^2(\Omega)}+\frac{1}{q\eta^2}\|\mathcal{L}(\partial_{z_1}\partial_{z_2}^2\vp)(T,\cdot)\|^2_{L^2(\Omega)}\|^2\right)\nonumber\\
	&\quad+\left(q\eta +\frac{1}{q\eta}\right)\int_0^T e^{-2\eta z_0}\|\partial_{z_0z_1}\partial_{z_2}^2\vp(z_0,\cdot)\|^2_{L^2(\Omega)}dz_0+ \|f|_{z_0=0}\|^2_{H^3(\Omega)}.
\end{align}
Collecting the estimates of $\mathcal{J}_1,\cdots,\mathcal{J}_5$, we obtain the estimate of $\mathcal{I}_{4,2}$:
\begin{align}
	\mathcal{I}_{4,2}&\lesssim \frac{1}{q\eta}\sum_{|\alpha|\le 3}\int_{\Omega_T}e^{-2\eta z_0}|\mathcal{L}(D^\alpha\vp)|^2dzdz_0+(\frac{1}{q\eta}+q\eta)\sum_{|\alpha|\le 4}\int_{\Omega_T}e^{-2\eta z_0}|D^\alpha\vp|^2dzdz_0\nonumber\\
	&\quad+\|f|_{z_0=0}\|^2_{H^3(\Omega)}+(1+q\eta^2)\|\vp_0\|^2_{H^4(\Omega)}+\|\vp_1\|^2_{H^3(\Omega)}\nonumber\\
	&\quad+\frac{1}{q\eta^2}e^{-2\eta T}\left(\|\vp(T,\cdot)\|^2_{H^4(\Omega)}+\sum_{|\alpha|\le 3}\|\mathcal{L}(D^\alpha)\vp(T,\cdot)\|^2_{L^2(\Omega)}\right)\nonumber\\
	&\quad+q e^{-2\eta T}\|\vp(T,\cdot)\|^2_{H^4(\Omega)}\label{3-80}.
\end{align}
For $\mathcal{I}_{4,3}$, via \eqref{3-59}, the trace theorem, and the Cauchy inequality, one has
\begin{align}
	 \mathcal{I}_{4,3}&\lesssim \int_0^T e^{-2\eta z_0}\left(\frac{1}{q\eta}\|\mathcal{L}(\partial_{z_0z_2}\vp)\|^2_{H^1(\Omega)}+q\eta \|\partial_{z_1}\partial_{z_2}^2\vp\|^2_{H^1(\Omega)}\right)dz_0\nonumber\\
	 &\lesssim \frac{1}{q\eta }\sum_{|\alpha|\le 3}\int_{\Omega_T}e^{-2\eta z_0}|\mathcal{L}(D^\alpha\vp)|^2dzdz_0+\left(\frac{1}{q\eta}+q\eta\right)\sum_{|\alpha|\le 4}\|e^{-2\eta z_0}D^\alpha\vp\|^2_{L^2(\Omega_T)}.\label{3-81}
\end{align}
Now the sum of the estimate of $\mathcal{I}_{4,1}$, $\mathcal{I}_{4,2}$, and $\mathcal{I}_{4,3}$ yields 
\begin{align}
	|\mathcal{I}_4|&\lesssim \frac{1}{q\eta}\sum_{|\alpha|\le 3}\int_{\Omega_T}e^{-2\eta z_0}|\mathcal{L}(D^\alpha\vp)|^2dzdz_0\nonumber\\
	&\quad+(\frac{1}{q\eta}+(q+\delta)\eta+\delta+1)\sum_{|\alpha|\le 4}\int_{\Omega_T}e^{-2\eta z_0}|D^\alpha\vp|^2dzdz_0\nonumber\\
	&\quad+e^{-2\eta T}\left((q+\delta)\|\vp(T,\cdot)\|^2_{H^4(\Omega)}+\frac{1}{q\eta^2}\sum_{|\alpha|\le 3}\|\mathcal{L}(D^\alpha\vp)(T,\cdot)\|^2_{L^2(\Omega)}\right)\nonumber\\
&\quad+\|f|_{z_0=0}\|^2_{H^3(\Omega)}.	
\end{align}
For $\mathcal{I}_5$, by \eqref{3-59} and the trace theorem
\begin{align}
    |\mathcal{I}_5|&=\left|-\int_0^T\!\!\!\int_{\mathbb{R}^+} e^{-2\eta z_0}\partial_{z_0}\partial_{z_2}^3\vp\sum_{i=0}^2r_i\partial_{z_i}\partial_{z_2}^2\vp dz_1dz_0\right|\nonumber\\
   &=\left|\int_0^T\!\!\!\int_{\mathbb{R}^+} e^{-2\eta z_0}\frac{1}{r_{22}}\mathcal{L}(\partial_{z_0z_2}\vp)\sum_{i=0}^2r_i\partial_{z_i}\partial_{z_2}^2\vp dz_1dz_0\right|\nonumber\\
   &\lesssim \int_0^T e^{-2\eta z_0}\left(\frac{1}{q\eta}\|\mathcal{L}\partial_{z_0z_2}\vp(z_0,\cdot)\|^2_{H^1(\Omega)}+q\eta \|\vp(z_0,\cdot)\|^2_{H^4(\Omega)}\right)dz_0\nonumber\\
   &\quad+\int_0^T e^{-2\eta z_0}\|\partial_{z_0}\vp(z_0,\cdot)\|^2_{H^3(\Omega)}dz_0\nonumber\\
   &\lesssim \int_0^Te^{-2\eta z_0}\frac{1}{q\eta}\left(\sum_{|\alpha|\le 4}\|D^\alpha\vp\|^2_{L^2(\Omega)}+\sum_{|\alpha|\le 3}\|\mathcal{L}(D^\alpha\vp)\|^2_{L^2(\Omega)}\right)dz_0\nonumber\\
   &\quad+(q\eta+1)\sum_{|\alpha|\le 4}\int_0^T e^{-2\eta z_0}\|D^\alpha\vp\|^2_{L^2(\Omega)}dz_0,
\end{align}
where in the first inequality, we have used the Cauchy inequality.
Gathering the estimates of $\mathcal{I}_1$, $\mathcal{I}_2$, $\cdots$, $\mathcal{I}_5$,
we finally have 
\begin{align}
	&\int_0^T\!\!\!\int_{\mathbb{R}^+}e^{-2\eta z_0}r_{22}\partial_{z_0}\partial_{z_2}^3\vp\partial_{z_2}^4\vp|_{z_2=0} dz_1dz_0\nonumber\\
	&\quad\lesssim  q\left(\eta \int_{0}^T e^{-2\eta z_0}\sum_{|\alpha|\le 4}\|D^\alpha\vp\|^2_{L^2(\Omega)}dz_0+e^{-2\eta T}\sum_{|\alpha|\le 4}\|D^\alpha\vp(T,\cdot)\|^2_{L^2(\Omega)}\right)\nonumber\\
	&\qquad+\frac{1}{q\eta}\int_0^T\!\!e^{-2\eta z_0}\left(\sum_{|\alpha|\le 4}\|D^\alpha\vp(z_0)\|^2_{L^2(\Omega)}+\sum_{|\alpha|\le 3}\|\mathcal{L}(D^\alpha\vp)(z_0)\|^2_{L^2(\Omega)}\right)dz_0\nonumber\\
	&\qquad+\frac{1}{q\eta}\int_{0}^T\!\!e^{-2\eta z_0}\left(\sum_{|\alpha|\le 3}\|\mathcal{L}(D^\alpha\vp)\|^2_{L^2(\Omega)}+\sum_{|\alpha|\le 4}\|D^\alpha\vp\|^2_{L^2(\Omega)}\right)dz_0\nonumber\\
	&\qquad+\frac{1}{q\eta^2}e^{-2\eta T}\left(\|\vp(T,\cdot)\|^2_{H^4(\Omega)}+\sum_{|\alpha|\le 3}\|\mathcal{L}(D^\alpha\vp)(T,\cdot)\|^2_{L^2(\Omega)}\right)\nonumber\\
	&\qquad+q e^{-2\eta T}\|\vp(T,\cdot)\|^2_{H^4(\Omega)}+\frac{1}{q}\|\partial_{z_2}^2f|_{z_0=0}\|^2_{L^2(\Omega)}).\label{3-84}
\end{align}
By \eqref{3-52}, \eqref{3-57}, and \eqref{3-84}, we obtain the estimate of $D\partial_{z_2}^3\vp$, i.e.,
\begin{align}
	&\eta\int_{\Omega_T}\!\!e^{-2\eta z_0}|D\partial_{z_2}^3\vp|^2 dzdz_0+e^{-2\eta T}\!\int_{\Omega}|D\partial_{z_2}^3\vp|_{z_0=T}|^2dz_1dz_2\nonumber\\
	&\quad\lesssim \delta\left(\eta\int_0^T e^{-2\eta z_0}\|\vp(z_0,\cdot)\|^2_{H^4(\Omega)}dz_0+e^{-2\eta T}\|\vp(T,\cdot)\|^2_{H^4(\Omega)}\right)\nonumber\\
	&\qquad+\delta \int_0^T \!\!\!e^{-2\eta z_0}(\|\vp(z_0,\cdot)\|^2_{H^3(\Omega)}+\|\partial_{z_0}\vp(z_0,\cdot)\|^2_{H^3(\Omega)})dz_0\nonumber\\
	&\qquad+ (q\eta+1)\sum_{|\alpha|\le 4}\int_{0}^T e^{-2\eta z_0}\|D^\alpha\vp\|^2_{L^2(\Omega)}dz_0+q e^{-2\eta T}\sum_{|\alpha|\le 4}\|D^\alpha\vp(T,\cdot)\|^2_{L^2(\Omega)}\nonumber\\
	&\qquad+\frac{1}{q\eta}\int_0^T\!\!e^{-2\eta z_0}\left(\sum_{|\alpha|\le 4}\|D^\alpha\vp(z_0)\|^2_{L^2(\Omega)}+\sum_{|\alpha|\le 3}\|\mathcal{L}(D^\alpha\vp)(z_0)\|^2_{L^2(\Omega)}\right)dz_0\nonumber\\
	&\qquad+\frac{1}{q\eta^2}e^{-2\eta T}\left(\|\vp(T,\cdot)\|^2_{H^4(\Omega)}+\sum_{|\alpha|\le 3}\|\mathcal{L}(D^\alpha\vp)(T,\cdot)\|^2_{L^2(\Omega)}\right)\nonumber\\
	&\qquad+\frac{1}{q}\|\partial_{z_2}^2f|_{z_0=0}\|^2_{L^2(\Omega)}.\label{3-87}
\end{align}
By applying the equation, 
\begin{align}
	&|\partial_{z_1}^2\partial_{z_2}^2\vp|\lesssim |\mathcal{L}(\partial_{z_2}^2\vp)|+|D\partial_{z_2}^3\vp|+\sum_{|\alpha|\le 3}|D^\alpha\partial_{z_0}\vp|,\\
	&|\partial_{z_1}^3\partial_{z_2}\vp|\lesssim |\mathcal{L}(\partial_{z_1z_2}\vp)|+|\partial_{z_1}^2\partial_{z_2}^2\vp|+|\partial_{z_1}\partial_{z_2}^3\vp|+\sum_{|\alpha|\le 3}|D^\alpha\partial_{z_0}\vp|\nonumber\\
	&\qquad\qquad\!\lesssim |\mathcal{L}(\partial_{z_1z_2}\vp)|+|\mathcal{L}(\partial_{z_2}^2\vp)|+|D\partial_{z_2}^3\vp|+\sum_{|\alpha|\le 3}|D^\alpha\partial_{z_0}\vp|,\\
	&\quad{\ }|\partial_{z_1}^4\vp|\lesssim |\mathcal{L}(\partial_{z_1z_2}\vp)|+|\mathcal{L}(\partial_{z_2}^2\vp)|+|\mathcal{L}(\partial_{z_1}^2\vp)|+|D\partial_{z_2}^3\vp|+\sum_{|\alpha|\le 3}|D^\alpha\partial_{z_0}\vp|.	
\end{align}
It is clear that $\partial_{z_1}^2\partial_{z_2}^2\vp$ can be bounded by the estimated terms, namely, the derivatives of $\partial_{z_0}\vp$ and $\partial_{z_2}\vp$, and the commutator. Then $\partial^3_{z_1}\partial_{z_2}\vp$ and $\partial_{z_1}^4\vp$ can be also bounded by the controlled terms. In fact, one has
\begin{align}
	&\eta\int_{\Omega_T}\!\!e^{-2\eta z_0}|\partial_{z_1}^2\partial_{z_2}^2\vp|^2 dzdz_0+e^{-2\eta T}\!\int_{\Omega}|\partial_{z_1}^2\partial_{z_2}^2\vp|_{z_0=T}|^2dz_1dz_2\nonumber\\
	&\quad+\eta\int_{\Omega_T}\!\!e^{-2\eta z_0}|\partial_{z_1}^3\partial_{z_2}\vp|^2 dzdz_0+e^{-2\eta T}\!\int_{\Omega}|\partial_{z_1}^3\partial_{z_2}\vp|_{z_0=T}|^2dz_1dz_2\nonumber\\
	&\quad+\eta\int_{\Omega_T}\!\!e^{-2\eta z_0}|\partial_{z_1}^4\vp|^2 dzdz_0+e^{-2\eta T}\!\int_{\Omega}|\partial_{z_1}^4\vp|_{z_0=T}|^2dz_1dz_2\nonumber\\
	&\lesssim \sum_{|\alpha|\le 2}\left(\eta\int_{\Omega_T}\!\!e^{-2\eta z_0}|\mathcal{L}(D^\alpha\vp)|^2 dzdz_0+e^{-2\eta T}\!\int_{\Omega}|\mathcal{L}(D^\alpha\vp)|_{z_0=T}|^2dz_1dz_2\right)\nonumber\\
	&\quad+\eta\int_{\Omega_T}\!\!e^{-2\eta z_0}|D\partial_{z_2}^3\vp|^2 dzdz_0+e^{-2\eta T}\!\int_{\Omega}|D\partial_{z_2}^3\vp|_{z_0=T}|^2dz_1dz_2\nonumber\\
	&\quad+\sum_{|\alpha|\le 3}\left(\eta\int_{\Omega_T}\!\!e^{-2\eta z_0}|D^\alpha\partial_{z_0}\vp|^2 dzdz_0+e^{-2\eta T}\!\int_{\Omega}|D^z\alpha\partial_{z_0}\vp|_{z_0=T}|^2dz_1dz_2\right)\label{3-91}.
\end{align}

By \eqref{eq:low norm bounded by higher norm}, we have
\begin{align}
	&\sum_{|\alpha|\le 2}\left(\eta\int_{\Omega_T}\!\!e^{-2\eta z_0}|\mathcal{L}(D^\alpha\vp)|^2 dzdz_0+e^{-2\eta T}\!\int_{\Omega}|\mathcal{L}(D^\alpha\vp)|_{z_0=T}|^2d
	z_1dz_2\right)\nonumber\\
   &\quad\lesssim\sum_{|\alpha|\le 2}\left(\frac{1}{\eta }\int_{\Omega_T}e^{-2\eta z_0}|\partial_{z_0}\mathcal{L}(D^\alpha\vp)|^2 dzdz_0+\|\mathcal{L}(D^\alpha \vp)|_{z_0=0}\|^2_{L^2(\Omega)}\right)\nonumber\\
   &\quad\lesssim \sum_{|\alpha|\le 3}\frac{1}{\eta }\int_{\Omega_T}e^{-2\eta z_0}|\mathcal{L}(D^\alpha\vp)|^2 dzdz_0+\sum_{|\alpha|\le 4}\frac{1}{\eta }\int_{\Omega_T}e^{-2\eta z_0}|D^\alpha\vp|^2 dzdz_0.\label{3-92}
\end{align}

It is clear that the left hand sides of \eqref{eq:3rd order estimate of partial_z0 vp}, \eqref{3-87}, and \eqref{3-91} cover all the fourth order derivatives of $\vp$. Hence by adding up \eqref{eq:3rd order estimate of partial_z0 vp}, \eqref{3-87}, and \eqref{3-91}, we obtain a estimate of all the fourth order derivatives, but still with $D^\alpha\partial_{z_0}\vp$, $D\partial_{z_2}^3\vp$ and $\mathcal{L}(D^\alpha\vp)$ $(|\alpha|\le 3)$ (which have already been estimated) on the right hand side of the estimate. The resultant inequality is too long, so we omit to write it down. Then we substitute the estimate of $D^\alpha\partial_{z_0}\vp$ $(|\alpha|\le 3)$ in \eqref{eq:3rd order estimate of partial_z0 vp}, the estimate of $D\partial_{z_2}^3\vp$ in \eqref{3-87}, and the estimate in \eqref{3-92} into the right hand side of the resultant estimate. Next, one firtstly chooses $q$ and $\delta$ properly small, then chooses $\eta$ appropriately large, one deduces that
\begin{align}
&\sum_{|\alpha|\le| 4}\left(\eta\int_{\Omega_T}\!\!e^{-2\eta z_0}|D^\alpha\vp|^2 dzdz_0+e^{-2\eta T}\!\int_{\Omega}|D^\alpha\vp|_{z_0=T}|^2d
z_1dz_2\right)\nonumber\\
&\quad\lesssim 	\frac{1}{\eta^2}\sum_{\alpha|\le 3}\left(\eta\int_{\Omega_T}\!\!e^{-2\eta z_0}|\mathcal{L}(D^\alpha\vp)|^2 dzdz_0+e^{-2\eta T}\!\int_{\Omega}|\mathcal{L}(D^\alpha\vp)|_{z_0=T}|^2d
z_1dz_2\right)\nonumber\\
&\qquad+ \|f|_{z_0}=0\|^2_{H^3(\Omega)}.
\end{align}
\end{proof}
Combining lemma \ref{third order regularity} and lemma \ref{fourth order regularity}, it is easy to see that lemma \ref{higher regularity} holds.

\section{The nonlinear problem, proof of theorem \ref{main theorem}}\label{sec:5}
In this section, based on the well-posedness of linear problem \eqref{linear problem} in Proposition \ref{well-posedness}, we will establish the existence of the non-linear problem by constructing an iteration scheme. The iteration scheme admits an approximate sequence of the solutions. Then by showing the sequence is bounded in the higher order norm and contracted in the lower order norm, one shows that the sequence converges to the desired solution. Hence theorem \ref{main theorem} is proved.
First from Proposition $\ref{well-posedness}$, we have the following theorem:
\begin{thm}\label{thm 4.1}
Under assumptions $(\romannumeral1)-(\romannumeral4)$, there exists a smooth solution to $\eqref{linear problem}$ and there exists a constant $\bar{\eta}>0$, such that for $\eta\ge\bar{\eta}$ and any $T>0$, 
   \begin{align}
   &\sum_{|\alpha|\le| 4}\left(\eta\int_{\Omega_T}\!\!e^{-2\eta z_0}|D^\alpha\vp|^2 dzdz_0+e^{-2\eta T}\!\int_{\Omega}|D^\alpha\vp|_{z_0=T}|^2d
   z_1dz_2\right)\nonumber\\
   &\quad\lesssim \frac{1}{\eta^2}\sum_{|\beta|\le 4}\left(\eta\int_0^T e^{-2\eta z_0}\|D^\beta f(z_0,\cdot)\|^2_{L^2(\Omega)}dz_0+e^{-2\eta T}\|D^\beta f(T,\cdot)\|^2_{L^2(\Omega)}\right)\nonumber\\
   &\qquad+ \|f|_{z_0}=0\|^2_{H^3(\Omega)}.
   \end{align}
\end{thm}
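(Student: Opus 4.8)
The plan is to derive Theorem \ref{thm 4.1} directly from Proposition \ref{well-posedness}. That proposition already provides the smooth solution $\vp\in H^4(\Omega_T)$ together with its energy estimate (in which we take $\vp_0=\vp_1=0$), and its left-hand side coincides with the quantity claimed here; so only the right-hand side needs reorganising — the solution-dependent term $\tfrac1\eta\sum_{|\alpha|\le3}\|e^{-\eta z_0}\mathcal{L}(D^\alpha\vp)\|_{L^2(\Omega_T)}^2$ must be eliminated, and the $f$-terms recast in the stated form. For the first task I would use the equation $\mathcal{L}\vp=f$ and commute $D^\alpha$ through $\mathcal{L}$, writing, for $|\alpha|\le3$,
\[
\mathcal{L}(D^\alpha\vp)=D^\alpha f-[D^\alpha,\mathcal{L}]\vp ,
\]
so that $\tfrac1\eta\sum_{|\alpha|\le3}\|e^{-\eta z_0}\mathcal{L}(D^\alpha\vp)\|_{L^2(\Omega_T)}^2\lesssim\tfrac1\eta\|e^{-\eta z_0}f\|_{H^3(\Omega_T)}^2+\tfrac1\eta\sum_{|\alpha|\le3}\|e^{-\eta z_0}[D^\alpha,\mathcal{L}]\vp\|_{L^2(\Omega_T)}^2$.

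Next I would bound the commutator. For $|\alpha|\le3$, Leibniz' rule makes $[D^\alpha,\mathcal{L}]$ a finite linear combination of products $(D^\beta r_{ij})\,D^\gamma\vp$ with $1\le|\beta|\le3$ and $|\gamma|\le4$. Using the smoothness and the bounds on $r_{ij}$ and its derivatives from assumptions $(\romannumeral1)$–$(\romannumeral2)$, the Sobolev embeddings $H^1(\Omega)\hookrightarrow L^4(\Omega)$ and $H^2(\Omega)\hookrightarrow L^\infty(\Omega)$ on the two-dimensional slices, and a trace inequality on $\Omega_T$ to control $\|D^2\vp(z_0,\cdot)\|_{L^\infty(\Omega)}$, and splitting the terms according to how many derivatives fall on the coefficient, one obtains
\[
\sum_{|\alpha|\le3}\|e^{-\eta z_0}[D^\alpha,\mathcal{L}]\vp\|_{L^2(\Omega_T)}^2\le C\sum_{|\beta|\le4}\|e^{-\eta z_0}D^\beta\vp\|_{L^2(\Omega_T)}^2 ,
\]
with $C$ depending only on $\rho_0,\gamma$ and the coefficient bounds. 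Substituting this into the estimate of Proposition \ref{well-posedness}, the resulting term $\tfrac{C}{\eta}\sum_{|\beta|\le4}\|e^{-\eta z_0}D^\beta\vp\|_{L^2(\Omega_T)}^2$ is absorbed by $\eta\sum_{|\beta|\le4}\|e^{-\eta z_0}D^\beta\vp\|_{L^2(\Omega_T)}^2$ on the left whenever $\eta\ge\bar\eta\defs\max\{\eta_0,\sqrt{2C}\}$, $\eta_0$ being the constant of Proposition \ref{well-posedness}.

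It then remains to handle the $f$-terms $\tfrac1\eta\|e^{-\eta z_0}f\|_{H^3(\Omega_T)}^2+\|e^{-\eta z_0}f\|_{H^3(\Omega_T)}^2+\|f|_{t=0}\|_{H^2(\Omega)}^2$. Applying inequality \eqref{eq:low norm bounded by higher norm} to $v=D^\alpha f$, $|\alpha|\le3$, gives $\int_0^Te^{-2\eta z_0}\|D^\alpha f(z_0,\cdot)\|_{L^2(\Omega)}^2\,dz_0\le\tfrac1{\eta^2}\|e^{-\eta z_0}f\|_{H^4(\Omega_T)}^2+\tfrac1\eta\|f|_{z_0=0}\|_{H^3(\Omega)}^2$; summing over $|\alpha|\le3$ bounds both $H^3(\Omega_T)$-norms of $f$ by $\tfrac1{\eta^2}\sum_{|\beta|\le4}\int_0^Te^{-2\eta z_0}\|D^\beta f(z_0,\cdot)\|_{L^2(\Omega)}^2\,dz_0+\|f|_{z_0=0}\|_{H^3(\Omega)}^2$, which is dominated by the right-hand side of the theorem since the extra endpoint terms $e^{-2\eta T}\|D^\beta f(T,\cdot)\|_{L^2(\Omega)}^2$ there are nonnegative; and $\|f|_{t=0}\|_{H^2(\Omega)}^2\le\|f|_{z_0=0}\|_{H^3(\Omega)}^2$ disposes of the remaining term. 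The hard part is the commutator estimate: one must check that the top-order piece $(Dr_{ij})D^4\vp$ is controlled by $\|Dr_{ij}\|_{L^\infty(\Omega_T)}\|D^4\vp\|_{L^2(\Omega_T)}$ while the lower-order pieces $(D^2r_{ij})D^3\vp$ and $(D^3r_{ij})D^2\vp$ require the $L^4$- and $L^\infty$-Sobolev inequalities on the slices, so that altogether the commutator is bounded by $\sum_{|\beta|\le4}\|D^\beta\vp\|^2$ with no loss of derivatives — but since each such term carries a factor $\eta^{-1}$ and is then absorbed, even generous bounds are enough.
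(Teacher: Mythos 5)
Your proposal is correct and follows essentially the same route as the paper: apply the higher-order estimate of Proposition \ref{well-posedness}, write $\mathcal{L}(D^\alpha\vp)=D^\alpha f-[D^\alpha,\mathcal{L}]\vp$, bound the commutator by $\sum_{|\beta|\le 4}\|D^\beta\vp\|^2$ via slice-wise Sobolev embeddings and assumption $(\romannumeral2)$, and absorb the resulting term into the left-hand side. The only (harmless) differences are that the paper tracks the small factor $\delta$ in the commutator bound and absorbs using ``$\delta$ small and $\eta$ large,'' whereas you absorb using the $1/\eta$ prefactor alone, and you are somewhat more explicit than the paper in converting the residual $f$-terms into the stated form via \eqref{eq:low norm bounded by higher norm}.
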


\begin{proof}
Note that
\begin{align}\label{4.2}
\mathcal{L}(D^\alpha\vp)=-[D^\alpha,\mathcal{L}]\vp+D^\alpha\mathcal{L}\vp
=-[D^\alpha,\mathcal{L}]\vp+D^\alpha f.
\end{align}
For the commutator $[D^\alpha,\mathcal{L}]\vp$, it is clear that
\begin{align}
    [D^\alpha,\mathcal{L}]\vp=\sum_{i,j=0}^2\left(D^\alpha(r_{ij}\partial_{ij}\vp)-r_{ij}D^\alpha\partial_{ij}\vp\right).
\end{align}

By the Sobolev embedding theorem and assumption $(\romannumeral2)$, one has
\begin{align}
    &\|[D^\alpha,\mathcal{L}]\vp\|^2_{L^2(\Omega)}\lesssim \|\partial_{ij} \vp\|^2_{L^\infty(\Omega)}\cdot\left(\sum_{|\beta\le 4|}\|D^\beta\Phi\|^2_{L^2(\Omega)}\right)+ \|D\partial_{ij}\vp\cdot D^3\Phi\|^2_{L^2(\Omega)}\nonumber\\
    &\qquad\qquad\qquad+\|D^2\partial_{ij}\vp\|^2_{L^2(\Omega)}\cdot\|D^2\Phi\|^2_{L^\infty(\Omega)}\nonumber\\
    &\quad\lesssim \delta(\|\vp\|^2_{H^4(\Omega)}+\|\partial_{z_0}\vp\|^2_{H^3(\Omega)}+\|\partial_{z_0}^2\vp\|^2_{H^2(\Omega)})+\|D\partial_{ij}\vp\|^2_{L^4(\Omega)}\cdot\|D^3\Phi\|^2_{L^4(\Omega)}\nonumber\\
    &\quad \lesssim \delta(\|\vp\|^2_{H^4(\Omega)}+\|\partial_{z_0}\vp\|^2_{H^3(\Omega)}+\|\partial_{z_0}^2\vp\|^2_{H^2(\Omega)})+\|D\partial_{ij}\vp\|^2_{H^1(\Omega)}\cdot\|D^3\Phi\|^2_{H^1(\Omega)}\nonumber\\
    &\quad\lesssim \delta(\|\vp\|^2_{H^4(\Omega)}+\|\partial_{z_0}\vp\|^2_{H^3(\Omega)}+\|\partial_{z_0}^2\vp\|^2_{H^2(\Omega)}+\|\partial_{z_0}^3\vp\|^2_{H^1(\Omega)})\nonumber\\
    &\quad \lesssim \delta\sum_{|\beta|\le 4}\|D^\beta\vp\|^2_{L^2(\Omega)},
    \end{align}
where we have used the H{\"o}lder's inequality in the second inequality.
Hence,
\begin{align}
    &\eta\int_0^Te^{-2\eta z_0} \|[D^\alpha,\mathcal{L}]\vp(z_0,\cdot)\|_{L^2(\Omega)}^2dz_0+e^{-2\eta T} \|[D^\alpha,\mathcal{L}]\vp(T,\cdot)\|_{L^2(\Omega)}^2\nonumber\\
    &\le \delta\sum_{|\beta|\le 4}\left(\eta\int_0^T e^{-2\eta z_0}\|D^\beta\vp(z_0,\cdot)\|^2_{L^2(\Omega)}dz_0+e^{-2\eta T}\|D^\beta\vp(T,\cdot)\|^2_{L^2(\Omega)}\right).
\end{align}
This together with \eqref{4.2} gives 
\begin{align}
    &\eta\int_0^T e^{-2\eta z_0}\|\mathcal{L}(D^\alpha\vp)(z_0,\cdot)\|_{L^2(\Omega)}dz_0+e^{-2\eta T}\|\mathcal{L}(D^\alpha\vp)(T,\cdot)\|_{L^2(\Omega)}\nonumber\\
    &\quad\lesssim \delta\sum_{|\beta|\le 4}\left(\eta\int_0^T e^{-2\eta z_0}\|D^\beta\vp(z_0,\cdot)\|^2_{L^2(\Omega)}dz_0+e^{-2\eta T}\|D^\beta\vp(T,\cdot)\|^2_{L^2(\Omega)}\right)\nonumber\\
    &\qquad+ \sum_{|\beta|\le 3}\left(\eta\int_0^T e^{-2\eta z_0}\|D^\beta f(z_0,\cdot)\|^2_{L^2(\Omega)}dz_0+e^{-2\eta T}\|D^\beta f(T,\cdot)\|^2_{L^2(\Omega)}\right).\label{estimate of commutator}
\end{align}
Therefore for properly small $\delta$ and appropriately large $\eta$, we deduce that
\begin{align}
&\sum_{|\alpha|\le| 4}\left(\eta\int_{\Omega_T}\!\!e^{-2\eta z_0}|D^\alpha\vp|^2 dzdz_0+e^{-2\eta T}\!\int_{\Omega}|D^\alpha\vp|_{z_0=T}|^2d
z_1dz_2\right)\nonumber\\
&\quad\lesssim \frac{1}{\eta^2}\sum_{|\beta|\le 3}\left(\eta\int_0^T e^{-2\eta z_0}\|D^\beta f(z_0,\cdot)\|^2_{L^2(\Omega)}dz_0+e^{-2\eta T}\|D^\beta f(T,\cdot)\|^2_{L^2(\Omega)}\right)\nonumber\\
&\qquad+ \|f|_{z_0}=0\|^2_{H^3(\Omega)}.
\end{align}
\end{proof}
\begin{lem}
    For any smooth function $v$, we have
    \begin{align}
     \|e^{-\eta z_0}v\|^2_{H^s(\Omega_T)}\le \sum_{|\alpha|\le s} \|e^{-\eta z_0}D^\alpha v\|^2_{L^2(\Omega_T)}\label{relation of norms}
\end{align}
provided that $\partial^j_{z_0}v|_{z_0=0}=0$ for $j=1,\cdots,s-1$.
\end{lem}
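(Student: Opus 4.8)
The plan is to eliminate the exponential weight by conjugation and thereby reduce the inequality to a purely one--dimensional estimate in the time variable $z_0$. First I would record the conjugation identity: for every multi--index $\alpha=(\alpha_0,\alpha_1,\alpha_2)$ and every smooth $g$,
\[
e^{-\eta z_0}\,D^\alpha\!\big(e^{\eta z_0}g\big)=(\partial_{z_0}+\eta)^{\alpha_0}\,\partial_{z_1}^{\alpha_1}\partial_{z_2}^{\alpha_2}g ,
\]
which follows by induction on $\alpha_0$ (the spatial derivatives commute with multiplication by $e^{\eta z_0}$, and $e^{-\eta z_0}\partial_{z_0}\big(e^{\eta z_0}\,\cdot\,\big)=\partial_{z_0}+\eta$). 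Applying it with $g=e^{-\eta z_0}v$ and putting $w\defs e^{-\eta z_0}v$ gives $e^{-\eta z_0}D^\alpha v=(\partial_{z_0}+\eta)^{\alpha_0}\partial_{z_1}^{\alpha_1}\partial_{z_2}^{\alpha_2}w$, so \eqref{relation of norms} becomes
\[
\sum_{|\alpha|\le s}\big\|D^\alpha w\big\|_{L^2(\Omega_T)}^2\ \le\ \sum_{|\alpha|\le s}\big\|(\partial_{z_0}+\eta)^{\alpha_0}\partial_{z_1}^{\alpha_1}\partial_{z_2}^{\alpha_2}w\big\|_{L^2(\Omega_T)}^2 .
\]
Expanding $\partial_{z_0}^j(e^{-\eta z_0}v)$ by the binomial formula, one sees that the trace hypothesis on $v$ is transferred to $w$: all pure time traces $\partial_{z_0}^j w$, and hence all mixed traces $\partial_{z_0}^j\partial_{z_1}^{\alpha_1}\partial_{z_2}^{\alpha_2}w$, vanish on $\Gamma_0$ in the relevant range of $j$.

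The core is the elementary one--step inequality: if a smooth function $u$ vanishes on $\Gamma_0$, then integrating $\eta\,\partial_{z_0}(|u|^2)=2\eta\,u\,\partial_{z_0}u$ over $\Omega_T$ yields
\[
\big\|(\partial_{z_0}+\eta)u\big\|_{L^2(\Omega_T)}^2=\|\partial_{z_0}u\|_{L^2(\Omega_T)}^2+\eta\,\|u(T,\cdot)\|_{L^2(\Omega)}^2+\eta^2\|u\|_{L^2(\Omega_T)}^2\ \ge\ \|\partial_{z_0}u\|_{L^2(\Omega_T)}^2 ,
\]
the boundary term at $z_0=0$ being killed by the hypothesis and the remaining two terms being non--negative (recall $\eta\ge 1$). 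I would then iterate this, exploiting that $\partial_{z_0}+\eta$ commutes with $\partial_{z_0}$ and with $\partial_{z_1},\partial_{z_2}$. Fixing a spatial index $\gamma=(\alpha_1,\alpha_2)$ with $\partial^\gamma\defs\partial_{z_1}^{\alpha_1}\partial_{z_2}^{\alpha_2}$ and any $j$,
\[
\big\|(\partial_{z_0}+\eta)^j\partial^\gamma w\big\|\ \ge\ \big\|(\partial_{z_0}+\eta)^{j-1}\partial_{z_0}\partial^\gamma w\big\|\ \ge\ \cdots\ \ge\ \big\|\partial_{z_0}^{\,j}\partial^\gamma w\big\| ,
\]
where the $i$-th step is the one--step inequality applied to $u=(\partial_{z_0}+\eta)^{\,j-1-i}\partial_{z_0}^{\,i}\partial^\gamma w$, whose $\Gamma_0$--trace is a combination of $\partial_{z_0}^{\,m}\partial^\gamma w\big|_{\Gamma_0}$ with $i\le m\le j-1$ and hence vanishes. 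Summing over all $|\alpha|\le s$ (equivalently over all $\gamma$ and over $j=\alpha_0$ with $|\gamma|+j\le s$) gives exactly the desired inequality, hence \eqref{relation of norms}.

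I expect the only real point of care to be the bookkeeping in the iteration: one must check that every auxiliary function $u$ appearing there indeed has vanishing $z_0$--trace on $\Gamma_0$, which is precisely what the assumption $\partial_{z_0}^{\,j}v|_{z_0=0}=0$ provides (each step only requires the traces in the range $0\le j\le s-1$). Apart from that, the argument is a routine integration by parts in $z_0$ combined with $\eta\ge1$; in particular no spatial integration by parts and no use of the geometry of $\Omega$ (or of the corner) is needed.
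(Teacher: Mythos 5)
Your proof is correct and rests on exactly the mechanism the paper uses: a purely one--dimensional integration by parts in $z_0$, with the boundary term at $z_0=0$ killed by the vanishing traces and the terms at $z_0=T$ and the $\eta^2$--term discarded by sign; no spatial integration by parts and no geometry of the corner enters in either argument. What differs is the organization. The paper first proves the weighted Poincar\'e--type bound $\int_{\Omega_T}e^{-2\eta z_0}|D^{n}v|^2\,dz\le\eta^{-2m}\int_{\Omega_T}e^{-2\eta z_0}|D^{m+n}v|^2\,dz$ and then runs an induction on $s$, expanding $D^\alpha(e^{-\eta z_0}v)$ by Leibniz so that every derivative landing on the weight produces a power of $\eta$ to be re--absorbed. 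You instead conjugate once and for all, writing $e^{-\eta z_0}D^\alpha v=(\partial_{z_0}+\eta)^{\alpha_0}\partial_{z_1}^{\alpha_1}\partial_{z_2}^{\alpha_2}w$ with $w=e^{-\eta z_0}v$, and compare term by term through the exact identity $\|(\partial_{z_0}+\eta)u\|^2=\|\partial_{z_0}u\|^2+\eta\|u(T,\cdot)\|^2_{L^2(\Omega)}+\eta^2\|u\|^2$. This buys a cleaner proof: every discarded quantity is manifestly nonnegative, and you never have to handle cross terms in the Leibniz expansion, which the paper's computation of $\sum_{|\alpha|=k+1}\|D^\alpha(e^{-\eta z_0}v)\|^2$ treats somewhat cavalierly.

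One point should be made explicit rather than asserted. The transfer of the trace hypothesis to $w$ is not automatic: $\partial_{z_0}^{m}w|_{z_0=0}=\sum_{k=0}^{m}\binom{m}{k}(-\eta)^{m-k}\,\partial_{z_0}^{k}v|_{z_0=0}$, and the $k=0$ term survives unless $v|_{z_0=0}=0$. Your parenthetical range ``$0\le j\le s-1$'' therefore uses the $j=0$ trace, which the hypothesis as literally stated ($j=1,\dots,s-1$) does not supply. This is a defect of the lemma's statement rather than of your argument: the paper's own proof needs $v|_{z_0=0}=0$ in exactly the same way (both in the integration by parts for $A(T)$ and in the recursive formula, which is stated with $l$ starting from $0$), and in the application to $\tilde{\vp}_m$ and $v_m$ the $j=0$ trace does vanish. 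Still, you should either add $v|_{z_0=0}=0$ to the hypotheses or observe that without it the inequality already fails for $s=1$ (take $v$ spatially compactly supported and concentrated near $z_0=0$ in time).
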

\begin{proof}
For $T>0$, let
\begin{align}
    A(T)=\int_{\Omega_T}e^{-2\eta z_0}v^2dz.
\end{align}
Then one has
\begin{align}
    A(T)&=-\frac{1}{2\eta}\int_{\Omega_T}(e^{-2\eta z_0})_{z_0}v^2dz\nonumber\\
    &=-\frac{1}{2\eta}\int_{\Omega_T}(e^{-2\eta z_0}v^2)_{z_0}-e^{-2\eta z_0}2vv_{z_0}dz\nonumber\\
    &\le-\frac{1}{2\eta}e^{-2\eta T}\int_{\Omega}v^2dz_1dz_2+\frac{1}{2\eta}\int_{\Omega_T}e^{-2\eta z_0}(\eta v^2+\frac{1}{\eta}v_{z_0}^2)dz\nonumber\\
    &\le \frac{1}{2}A(T)+\frac{1}{2\eta^2}\int_{\Omega_T}e^{-2\eta z_0}v_{z_0}^2dz.
\end{align}
Hence we have
\begin{align}
    A(T)\le \frac{1}{\eta^2}\int_{\Omega_T}e^{-2\eta z_0}v_{z_0}^2dz.\label{4.11}
\end{align}
Now we show $\eqref{relation of norms}$ by the induction on $s$. For $s=1$,
    \begin{align}
    \|e^{-\eta z_0}v\|^2_{H^1(\Omega_T)}&=\eta^2\|e^{-\eta z_0}v\|^2_{L^2(\Omega_T)}+\|e^{-\eta z_0}Dv\|^2_{L^2(\Omega_T)}\nonumber\\
    &\le \|e^{-\eta z_0}v_{z_0}\|^2_{L^2(\Omega_T)}+\|e^{-\eta z_0}Dv\|^2_{L^2(\Omega_T)}\nonumber\\
    & \le\sum_{|\alpha|\le 1} \|e^{-\eta z_0}D^\alpha v\|^2_{L^2(\Omega_T)}.
\end{align}
Now for $k\in\mathbb{N}$, assume
\begin{equation}\label{4.31}
\|e^{-\eta z_0}v\|^2_{H^k(\Omega_T)}\le \sum_{|\alpha|\le k} \|e^{-\eta z_0}D^\alpha v\|^2_{L^2(\Omega_T)}.
\end{equation}

We are going to show
\begin{equation}\label{4.32}
\|e^{-\eta z_0}v\|^2_{H^{k+1}(\Omega_T)}\le \sum_{|\alpha|\le k+1} \|e^{-\eta z_0}D^\alpha v\|^2_{L^2(\Omega_T)}.
\end{equation}

Repeating the process for estimate \eqref{4.11} above $m$ times where $|v|^2$ in $A(T)$ is replaced by $|D^nv|^2$, we have
\begin{equation}\label{recursive formula}
\int_0^{T}e^{-2\eta t}\|D^{n}v\|^2_{L^2(\Omega)}\mathrm{d}t\le \eta^{-2m}\int_0^{T}e^{-2\eta t}\|D^{m+n}v\|^2_{L^2(\Omega)}\mathrm{d}t,
\end{equation}
provided that $\partial^l_tv|_{t=0}=0,\, l=0,1,2,\cdots, m+n-1.$
Note that
\begin{equation}\label{induction1}
\|e^{-\eta z_0}v\|^2_{H^{k+1}(\Omega_T)}= \|e^{-\eta z_0}v\|^2_{H^{k}(\Omega_T)}+ \sum_{|\alpha|=k+1}\|\mathrm{D}^{\alpha}(e^{-\eta t}v)\|_{L^2(\Omega_T)}^2
\end{equation}
and
\begin{align}\label{induction2}
\sum_{|\alpha|= k+1}\|\mathrm{D}^{\alpha}(e^{-\eta t}v)\|^2_{L^2(\Omega_T)}&=\sum_{l_1+l_2=k+1}\|(-\eta)^{l_1}e^{-\eta t}\mathrm{D}^{l_2}v\|^2_{L^2(\Omega_T)}\nonumber\\
&=\sum_{l_1+l_2=k+1}(\eta)^{2l_1}\|e^{-\eta t}\mathrm{D}^{l_2}v\|^2_{L^2(\Omega_T)}.
\end{align}
So by $\eqref{recursive formula}$, we have
\begin{align}\label{induction3}
\sum_{l_1+l_2=k+1}(\eta)^{2l_1}\|e^{-\eta t}\mathrm{D}^{l_2}v\|^2_{L^2(\Omega_T)}&\le \sum_{l_1+l_2=k+1}\|e^{-\eta t}\mathrm{D}^{l_1+l_2}v\|^2_{L^2(\Omega_T)}\nonumber\\
&=\sum_{|\alpha|=k+1}\|e^{-\eta t}\mathrm{D}^{\alpha}v\|^2_{L^2(\Omega_T)}.
\end{align}
From \eqref{4.31}, $\eqref{induction1}$--$\eqref{induction3}$, we obtain \eqref{4.32}.
Therefore, we derive the estimate \eqref{relation of norms} for any $s\in \mathbb{N}$ by the induction method.
\end{proof}

Let $\psi(z_0,z_1,z_2)=\sum_{k=0}^3\frac{\vp_k}{k!}z_0^k$, where $\vp_k=\partial^k_{z_0}\hat{\Phi}|_{z_0=0}$, \emph{i.e.}, $\vp_0=\hat{\Phi}_0$ and $\vp_1=\hat{\Phi}_1$ by the initial conditions in \eqref{eq:nolinear in z-coordinate}, and $\vp_k$ for $k=2$ or $3$ is defined by equation $\eqref{eq:nolinear in z-coordinate}_1$ and the initial conditions.
Then one defines a approximation sequence in the following manner. 

Let $\tilde{\vp}_0=0$ and suppose $\tilde{\vp}_m$ is given. Then $\tilde{\vp}_{m+1}$ is defined as the solution to the following initial boundary value problem:
\begin{equation}\label{4.14}
    \begin{cases}
    \mathcal{L}(\tilde{\vp}_m+\psi)\tilde{\vp}_{m+1}=F_m,&\mbox{in\ }\Omega_T,\\
    \mathcal{B}\tilde{\vp}_{m+1}=0,
    &\mbox{on\ }\Gamma_1,\\
    \partial_{z_2}\tilde{\vp}_{m+1}=0,&\mbox{on\ }\Gamma_2,\\
    \tilde{\vp}_{m+1}=0,\quad\partial_{z_0}\tilde{\vp}_{m+1}=0,&\mbox{on\ }\Gamma_0,
\end{cases}
\end{equation}
where
\begin{align}
    \mathcal{L}(\Psi')\Psi''\defs\sum_{i,j=0}^2\alpha_{ij}(D\Psi')\partial_{ij}\Psi'',
\end{align}
and
\begin{align}\label{4.15}
	F_m=-\mathcal{L}(\tilde{\vp}_m+\psi)\psi-\sum_{i=0}^2\alpha_i(\tilde{\vp}_m+\psi)\partial_{z_i}\tilde{\vp}_m.
\end{align}

By the compatibility conditions, we have $\mathcal{B}\psi=0$ on $\Gamma_1$ and $\partial_{z_2}\psi=0$ on $\Gamma_2$. Via Theorem \ref{thm 4.1}, the sequence $\{{\tilde{\vp}_m}\}_{m=0}^\infty$ is well-defined. Now, we will show $\tilde{\vp}_m$ converges to some function $\tilde{\vp}$, and then $\tilde{\vp}+\psi$ is a solution to the non-linear problem $\eqref{eq:nolinear in z-coordinate}$.


\begin{prop}[Boundness in the higher order norm]\label{high norm boundedness}
 There exist three constants $\delta_0>0$, $\eta_0\ge 1$, and $T_0>0$ such that for $\eta \ge \eta_0$ and $0<T\le T_0$, and for all $n\ge 0$, it holds that
\begin{align}\label{High norm boundedness}
    \|e^{-\eta z_0}\tilde{\vp}_{n}\|_{H^4(\Omega_T)}+e^{-2\eta T}\sum_{k=0}^4\sup_{0\le z_0\le T} \|\partial_{z_0}^{k}\tilde{\vp}_n(z_0,\cdot)\|_{H^{4-k}(\Omega)}\le \delta_0.
\end{align}
\end{prop}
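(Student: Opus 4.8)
# Proof Proposal for Proposition 5.3 (Boundedness in the Higher Order Norm)

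The plan is to prove \eqref{High norm boundedness} by induction on $n$, using the a priori estimate of Theorem \ref{thm 4.1} applied to the linearized problem \eqref{4.14} that defines $\tilde{\vp}_{n+1}$. The base case $n=0$ is immediate since $\tilde{\vp}_0 = 0$. For the inductive step, one assumes \eqref{High norm boundedness} holds for $\tilde{\vp}_n$ with the yet-to-be-chosen constants $\delta_0, \eta_0, T_0$, and must verify that the coefficients $\alpha_{ij}(D(\tilde{\vp}_n+\psi))$ and the boundary operator $\mathcal{B}$ satisfy assumptions $(\romannumeral1)$--$(\romannumeral4)$ of Section \ref{sec:3} with a smallness parameter $\delta$ controlled by $\delta_0 + \epsilon$, so that Theorem \ref{thm 4.1} is applicable to $\tilde{\vp}_{n+1}$. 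The key structural point is that $(\romannumeral3)$ and $(\romannumeral4)$ (the vanishing of $\alpha_{02}, \alpha_{12}$ on $\Gamma_2$, the co-normality $\frac{\alpha_{12}}{\alpha_{11}} = \frac{\bar b_2}{\bar b_1}$ on $\Gamma_1$, and $r_{01}=0$ on $\Gamma_1$) are \emph{exact} algebraic identities coming from Lemma \ref{lem: 2.1}; they hold for the frozen coefficients $\alpha_{ij}(D(\tilde\vp_n+\psi))$ because the slip boundary conditions are preserved along the iteration (indeed $\mathcal{B}\tilde\vp_{m+1} = 0$ on $\Gamma_1$ and $\partial_{z_2}\tilde\vp_{m+1}=0$ on $\Gamma_2$ by construction, and $\mathcal{B}\psi = 0$, $\partial_{z_2}\psi = 0$ by the compatibility conditions). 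Thus these are inherited at every step and are not a smallness issue.

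The next step is to estimate the source term $F_n$ from \eqref{4.15} in the norm appearing on the right-hand side of Theorem \ref{thm 4.1}, namely $\frac{1}{\eta^2}\sum_{|\beta|\le 3}\big(\eta\int_0^T e^{-2\eta z_0}\|D^\beta F_n(z_0,\cdot)\|_{L^2(\Omega)}^2 dz_0 + e^{-2\eta T}\|D^\beta F_n(T,\cdot)\|_{L^2(\Omega)}^2\big) + \|F_n|_{z_0=0}\|_{H^3(\Omega)}^2$. The first piece, $-\mathcal{L}(\tilde\vp_n+\psi)\psi$, is controlled in terms of $\psi$ — which by its definition is a cubic polynomial in $z_0$ with coefficients $\vp_k$ determined by the initial data, hence bounded by $C\epsilon$ in the relevant norms via the smallness hypothesis $\|\hat\Phi_0\|_{H^4} + \|\hat\Phi_1\|_{H^3} + \|(\mcw_1,\mcw_2)\|_{W^{6,\infty}} \le \epsilon$ of Theorem \ref{main theorem}. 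The second piece, $-\sum_i \alpha_i(\tilde\vp_n+\psi)\partial_{z_i}\tilde\vp_n$, involves $\alpha_i$ which are products of $\mcw$-derivatives (hence $O(\epsilon)$) times derivatives of $\tilde\vp_n + \psi$, so by the Moser-type nonlinear estimates (Sobolev embedding $H^1(\Omega)\hookrightarrow L^4(\Omega)$, Hölder) and the inductive bound one gets $\|D^\beta F_n\|_{L^2}\lesssim (\epsilon + P(\delta_0))$ for a polynomial $P$ with $P(0)=0$, times an $H^3$-type norm of $\tilde\vp_n$ times (crucially) a factor $T^{1/2}$ whenever $z_0$-integration in time can be exploited, or $e^{-\eta T_0}$-type factors from the sup-in-time terms. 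The commutator-type estimates carried out in the proof of Theorem \ref{thm 4.1} (cf. \eqref{estimate of commutator}) are exactly the template: one absorbs the $\delta$-small terms into the left-hand side after choosing $\eta$ large, so the effective bound becomes $\|e^{-\eta z_0}\tilde\vp_{n+1}\|_{H^4(\Omega_T)}^2 \lesssim \frac{1}{\eta^2}\cdot C(\epsilon^2 + T_0\, \delta_0^2) + C\epsilon^2$.

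The final step is to close the induction by choosing the constants in the right order: first fix $\delta_0$ so that $C\epsilon^2 \le \frac{1}{2}\delta_0^2$ (requiring $\epsilon$ small relative to $\delta_0$, consistent with $\epsilon$ being at our disposal), then fix $\eta_0$ large enough that the $\delta$-small and $\frac1\eta$ terms are absorbed and Lemma \ref{lem:3.1}-type thresholds $\delta\le\delta_*$ are met with $\delta \asymp \delta_0 + \epsilon$, then fix $T_0$ small enough that $\frac{C T_0 \delta_0^2}{\eta_0^2}\le \frac{1}{4}\delta_0^2$; the remaining sup-in-time norms $e^{-2\eta T}\sum_k \sup_{z_0}\|\partial_{z_0}^k\tilde\vp_n\|_{H^{4-k}}$ are then controlled by the same estimate, since Theorem \ref{thm 4.1} already produces the $e^{-2\eta T}\|D^\alpha\vp(T,\cdot)\|_{L^2}^2$ boundary-in-time terms on its left-hand side (and one uses \eqref{eq:low norm bounded by higher norm} / \eqref{relation of norms} to pass between the spacetime $H^4$ norm and the time-sliced norms). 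The main obstacle I anticipate is the bookkeeping in the source-term estimate: one must show that \emph{every} term in $D^\beta F_n$ with $|\beta|\le 3$ either carries an explicit $\epsilon$ (from $\mcw$-derivatives or from $\psi \sim \epsilon$) or can be made small by the $T^{1/2}$ gain from time integration, so that nothing of size $O(1)\cdot\|\tilde\vp_n\|_{H^4}$ survives — in particular the top-order term $\alpha_i \partial_{z_i}(D^3\tilde\vp_n)$ must be handled by integrating the corresponding contribution in $z_0$ and invoking the $\frac{1}{\eta^2}$ prefactor together with $\eta\ge\eta_0$, rather than by smallness of $\delta_0$ alone. Verifying the initial-time piece $\|F_n|_{z_0=0}\|_{H^3(\Omega)}^2 \le C\epsilon^2$ is routine since at $z_0=0$ all quantities reduce to expressions in $\hat\Phi_0, \hat\Phi_1$ and $\mcw$, all of size $\epsilon$.
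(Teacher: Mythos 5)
Your proposal follows essentially the same route as the paper: induction on $n$, application of the linear a priori estimate (Theorem \ref{thm 4.1}, with the structural assumptions $(\romannumeral1)$--$(\romannumeral4)$ inherited along the iteration) to the problem \eqref{4.14}, a product/Moser-type estimate of the source $F_n$ in \eqref{4.15}, and then the choice of constants in the order $\epsilon\lesssim\delta_0$, $\eta_0$ large, $T_0$ small with $e^{2\eta_0 T_0}\le 2$. The only difference is bookkeeping: where you invoke a $T^{1/2}$ gain from time integration for the top-order term $\alpha_i\partial_{z_i}D^3\tilde{\vp}_n$, the paper simply uses that every term of $F_n$ is a product of two small factors ($\psi\sim\epsilon$, $\tilde\vp_n\sim\delta_0$, $\alpha_i=O(\epsilon)$), so the quadratic smallness together with the $\eta^{-2}$ prefactor closes the induction directly; both mechanisms suffice.
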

\begin{proof}
We will prove it by the induction. 
It is easy to see $\eqref{High norm boundedness}$ is true when $n=0$, since $\epsilon$ can be selected sufficiently small.
Assume $\eqref{High norm boundedness}$ holds for $n=m\ge 0$. We will show $\eqref{High norm boundedness}$ holds for $n=m+1$. By \eqref{4.14} and Proposition $\ref{well-posedness}$, one has
   \begin{align}\label{4.17}
   	&\|e^{-\eta z_0}\tilde{\vp}_m\|^2_{H^4(\Omega_T)}+e^{-2\eta T}\sum_{k=0}^4\sup_{0\le z_0\le T} \|\partial_{z_0}^{k}\tilde{\vp}_{m+1}(z_0,\cdot)\|_{H^{4-k}(\Omega)}\nonumber\\
   	&\quad\le \frac{C}{\eta^2}\left(\|e^{-\eta z_0}F_m\|^2_{H^3(\Omega_T)}+e^{-2\eta T}\sum_{k=0}^3 \|\partial_{z_0}^{k}F_m(T,\cdot)\|^2_{H^{3-k}(\Omega)}\right).
   \end{align}
By \eqref{4.15} and $\eqref{High norm boundedness}$ holds for $n=m$, one has
   \begin{align}\label{4.18}
   	\|e^{-\eta z_0} F_m\|_{H^3(\Omega_T)}^2&\le C'\|e^{-\eta z_0}\psi\|^2_{H^5(\Omega_{T})}e^{2\eta T}\left(\delta_0^2+\|e^{-\eta z_0}\psi\|^2_{H^5(\Omega_T)}\right)\nonumber\\
   	&\quad+C' e^{2\eta T}\delta_0^2(\delta_0^2+\|e^{-\eta z_0}\psi\|^2_{H^5(\Omega_T)}).
   \end{align}
 Similarly for $k=0,1,2,3$, we have
   \begin{align}\label{4.19}
   	\|\partial_{z_0}^kF_m(T,\cdot)\|^2_{H^{3-k}(\Omega)}\le C'\epsilon^2 e^{2\eta T}\left(C\delta_0^2+\epsilon^2\right)
  +C' e^{2\eta T}\delta_0^2(C\delta_0^2+\epsilon^2).
   \end{align}
  Select $\eta_0\ge 1$ such that  $C C'\eta_0^{-2}\le \frac18$ and let $T_0$ be small such that $e^{2\eta_0 T_0}\le 2$. Then for $\delta_0\le \sqrt{\frac{1}{C}}$, one sets $0<\epsilon\le \delta_0$ in Theorem \ref{main theorem} small such that $\|e^{-\eta_0 z_0}\psi(z_0,\cdot)\|^2_{H^5(\Omega_{T_0})}\le \delta_0^2$. So it follows from \eqref{4.17}-\eqref{4.19} that,
  \begin{align}\label{20}
\|e^{-\eta z_0}\tilde{\vp}_m\|^2_{H^4(\Omega_T)}+e^{-2\eta T}\sum_{k=0}^4\sup_{0\le z_0\le T} \|\partial_{z_0}^{k}\tilde{\vp}_{m+1}(z_0,\cdot)\|_{H^{4-k}(\Omega)}
\le \delta_0^2.
  \end{align}
\end{proof}
Let $v_m=\tilde{\vp}_{m+1}-\tilde{\vp}_m$ for $m\ge 0$, then $v_m$ satisfies the following initial boundary value problem:
\begin{equation}\label{4.21}
\begin{cases}
\mathcal{L}_1(\tilde{\vp}_m+\psi)v_{m}=G_m,&\mbox{in\ }\Omega_T,\\
\mathcal{B}v_{m}=0,
&\mbox{on\ }\{z_1=0\},\\
\partial_{z_2}v_{m}=0,&\mbox{on\ }\{z_2=0\},\\
v_{m}=0,\quad\partial_{z_0}v_{m}=0,&\mbox{on\ }\{z_0=0\},
\end{cases}
\end{equation}
where
\begin{align}
	G_m&=-[\mathcal{L}(\tilde{\vp}_m+\psi)-\mathcal{L}(\tilde{\vp}_{m-1}+\psi)]\psi\nonumber\\
	&\quad-[\alpha_2(\tilde{\vp}_m+\psi)-\alpha_2(\tilde{\vp}_{m-1}+\psi)]\partial_{z_2}v_{m-1}\nonumber\\
	&\quad-[\mathcal{L}_1(\tilde{\vp}_{m}+\psi)-\mathcal{L}_1(\tilde{\vp}_{m-1}+\psi)]\tilde{\vp}_m.
\end{align}

\begin{prop}[Contraction in the lower order norm]\label{Low norm contraction}
Under the same assumptions in Theorem \ref{main theorem}, there exist two constants $\eta_*\ge \eta_0$ and $T_*\le T_0$ such that, for some $\sigma \in (0,1)$ for all $m\ge 1$ and for $\eta\ge \eta_*$ and $T\le T_*$, it holds that:
\begin{align}
&\|e^{-\eta z_0}v_{m}\|_{H^1(\Omega_T)}+e^{-2\eta T}\sum_{k=0}^1\sup_{0\le z_0\le T} \|\partial_{z_0}^{k}v_{m}(z_0,\cdot)\|_{H^{1-k}(\Omega)}\nonumber\\
&\le \sigma\left(\|e^{-\eta z_0}v_{m-1}\|_{H^1(\Omega_T)}+e^{-2\eta T}\sum_{k=0}^1\sup_{0\le z_0\le T} \|\partial_{z_0}^{k}v_{m-1}(z_0,\cdot)\|_{H^{1-k}(\Omega)}\right).\label{4.23}
\end{align}
\end{prop}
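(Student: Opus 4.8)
The plan is to run the \emph{basic} (first–order) energy estimate for the linear initial–boundary value problem \eqref{4.21} satisfied by $v_m=\tilde{\vp}_{m+1}-\tilde{\vp}_m$, and then to absorb the source $G_m$ by exploiting that each of its three pieces is a \emph{difference of coefficients}, which produces an extra factor of $|Dv_{m-1}|$, together with the uniform $H^4$ bound of Proposition \ref{high norm boundedness}. First I would check that the operator $\mathcal{L}_1(\tilde{\vp}_m+\psi)$ satisfies assumptions $(\romannumeral1)$--$(\romannumeral4)$: hyperbolicity, smoothness and boundedness of the coefficients follow from $\tilde{\vp}_m\in H^4(\Omega_{T_0})\hookrightarrow C^2(\overline{\Omega_{T_0}})$ (the relevant $H^4$-bound being the one at the fixed parameter $\eta_0$, hence a genuine constant $M\lesssim\delta_0$); the smallness in $(\romannumeral2)$ comes from \eqref{High norm boundedness} after shrinking $\delta_0$ and $\epsilon$; and the structural conditions $(\romannumeral3)$--$(\romannumeral4)$ (vanishing of $\alpha_{i2}$ on $\Gamma_2$, co-normality on $\Gamma_1$, and $\partial_{z_2}^k b_2(0,0)=0$) are inherited verbatim from lemma \ref{lem: 2.1}, since by construction $\mathcal{B}(\tilde{\vp}_m+\psi)=0$ on $\Gamma_1$ and $\partial_{z_2}(\tilde{\vp}_m+\psi)=0$ on $\Gamma_2$.

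Next I would multiply $\eqref{4.21}_1$ by $2e^{-2\eta z_0}\partial_{z_0}v_m$ and integrate over $\Omega_T$, exactly as in Step 1 of the proof of lemma \ref{H2 solvability} but directly in the cornered domain (the low-order estimate only needs $C^1$ coefficients). The boundary flux on $\Gamma_1$ vanishes because $\alpha_{10}=0$ and $\alpha_{11}\partial_{z_1}v_m+\alpha_{12}\partial_{z_2}v_m=(\alpha_{11}/\bar b_1)\mathcal{B}v_m=0$; the flux on $\Gamma_2$ vanishes because $\alpha_{20}=\alpha_{21}=0$ there and $\partial_{z_2}v_m=0$; the flux at $z_0=0$ vanishes since $v_m$ and $\partial_{z_0}v_m$ do; and the terminal term controls $e^{-2\eta T}\|Dv_m(T,\cdot)\|_{L^2(\Omega)}^2$ from below. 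For $\eta$ large enough to absorb the lower-order bulk terms this gives
\[
\eta\|e^{-\eta z_0}Dv_m\|_{L^2(\Omega_T)}^2+e^{-2\eta T}\|Dv_m(T,\cdot)\|_{L^2(\Omega)}^2\lesssim\frac1\eta\|e^{-\eta z_0}G_m\|_{L^2(\Omega_T)}^2 .
\]
Since $v_m$ vanishes at $z_0=0$, the Poincar\'e-type inequality \eqref{eq:low norm bounded by higher norm} upgrades this to $\|e^{-\eta z_0}v_m\|_{H^1(\Omega_T)}\lesssim\eta^{-1}\|e^{-\eta z_0}G_m\|_{L^2(\Omega_T)}$, and applying the same estimate on $\Omega_{T'}$ for every $T'\le T$, together with a fundamental-theorem-of-calculus bound for $\|v_m(T',\cdot)\|_{L^2(\Omega)}$, controls the supremum-in-time part of the left-hand side of \eqref{4.23} by $\eta^{-1/2}\|e^{-\eta z_0}G_m\|_{L^2(\Omega_T)}$ as well.

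It then remains to estimate $\|e^{-\eta z_0}G_m\|_{L^2(\Omega_T)}$. All three pieces of $G_m$ are differences of quantities evaluated at $\tilde{\vp}_m+\psi$ and $\tilde{\vp}_{m-1}+\psi$; because $\alpha_{ij}$ and $\alpha_i$ are smooth functions of $D\Phi$, each such difference is pointwise $O(|Dv_{m-1}|)$ by the mean value theorem, so $|G_m|\lesssim |Dv_{m-1}|\,(|D^2\psi|+|\partial_{z_2}v_{m-1}|+|D^2\tilde{\vp}_m|)$. Using $\|D^{\le 2}\psi\|_{L^\infty(\Omega_{T_0})}\lesssim\epsilon$, $\|Dv_{m-1}\|_{L^\infty(\Omega_{T_0})}\lesssim\|v_{m-1}\|_{H^4(\Omega_{T_0})}\lesssim\delta_0$ and $\|D^2\tilde{\vp}_m\|_{L^\infty(\Omega_{T_0})}\lesssim\|\tilde{\vp}_m\|_{H^4(\Omega_{T_0})}\lesssim\delta_0$ (all via the Sobolev embedding $H^4\hookrightarrow C^2$ on the three-dimensional domain and Proposition \ref{high norm boundedness}), one gets
\[
\|e^{-\eta z_0}G_m\|_{L^2(\Omega_T)}\lesssim(\epsilon+\delta_0)\,\|e^{-\eta z_0}Dv_{m-1}\|_{L^2(\Omega_T)}\le(\epsilon+\delta_0)\,\|e^{-\eta z_0}v_{m-1}\|_{H^1(\Omega_T)} .
\]
Combining this with the energy estimate bounds the left-hand side of \eqref{4.23} by $C(\epsilon+\delta_0)\eta^{-1}\|e^{-\eta z_0}v_{m-1}\|_{H^1(\Omega_T)}$, hence by $\sigma$ times the right-hand side of \eqref{4.23} with $\sigma:=C(\epsilon+\delta_0)/\eta$. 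One finishes by shrinking $\delta_0$ and $\epsilon$ (keeping Proposition \ref{high norm boundedness} valid), enlarging $\eta_*\ge\eta_0$ so that $\sigma\le\tfrac12<1$, and setting $T_*=T_0$.

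The main obstacle is the third piece $[\mathcal{L}_1(\tilde{\vp}_m+\psi)-\mathcal{L}_1(\tilde{\vp}_{m-1}+\psi)]\tilde{\vp}_m$, which a priori carries two derivatives of $\tilde{\vp}_m$: since the iterates are controlled only in $H^1$ at the lower level, the second derivative must stay on $\tilde{\vp}_m$, and one uses the uniform bound $\|D^2\tilde{\vp}_m\|_{L^\infty(\Omega_{T_0})}\lesssim\delta_0$ from Proposition \ref{high norm boundedness} plus the $C^2$-embedding — this is precisely why the higher-order boundedness has to be in hand before proving contraction, and the compensating factor $|Dv_{m-1}|$ is supplied exactly by the difference-of-coefficients structure of $G_m$. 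A secondary technical point is that the boundary integrals in the energy identity must genuinely vanish, which rests on the co-normality of $\mathcal{B}$ and the vanishing of $\alpha_{i2}$ on $\Gamma_2$ (the structural content of lemma \ref{lem: 2.1}) together with the homogeneous boundary conditions for $v_m$; finally, the constants $\delta_0,\epsilon,\eta_*,T_*$ must be fixed in the correct order so that the $H^4$-bound remains uniform and $\sigma<1$ hold simultaneously.
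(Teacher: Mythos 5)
Your proposal is correct and follows essentially the same route as the paper: apply the first-order weighted energy estimate (the paper simply cites \eqref{first order estimate: 2nd step} rather than re-deriving it in the cornered domain) to the problem \eqref{4.21} for $v_m$, bound $G_m$ in $L^2$ by $C(\delta_0)\|e^{-\eta z_0}v_{m-1}\|_{H^1(\Omega_T)}$ using the difference-of-coefficients (mean value) structure together with the uniform $H^4$ bound of Proposition \ref{high norm boundedness} and Sobolev embedding, and then take $\eta_*$ large so that $\sigma=C(\delta_0)\eta_*^{-1}<1$. Your additional checks (verification of assumptions $(\romannumeral1)$--$(\romannumeral4)$ for the frozen coefficients and the recovery of the $\sup$-in-time terms) are details the paper leaves implicit, not a different argument.
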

\begin{proof}
Note that
	\begin{align}
		&[\mathcal{L}(\tilde{\vp}_m+\psi)-\mathcal{L}(\tilde{\vp}_{m-1}+\psi)]\tilde{\vp}_{m}\nonumber\\
		&\quad=\sum_{i,j=0}^2[\alpha_{ij}(\tilde{\vp}_m+\psi)-\alpha_{ij}(\tilde{\vp}_{m-1}+\psi)]\partial_{ij}\tilde{\vp}_{m}\nonumber\\
		&\quad=\sum_{i,j,k=0}^3\left(\partial_k v_{m-1}\int_0^1\frac{\partial\alpha_{ij}}{\partial(\partial_k\tilde{\vp})}(\tilde{\vp}_{m-1}+\psi+\theta v_{m-1}) d\theta\right)\partial_{ij}\tilde{\vp}_{m}.\nonumber
	\end{align}
So for $\eta \ge \eta_0$ and $T\le T_0$, we have
	\begin{align}
		\|e^{-\eta z_0}[\mathcal{L}(\tilde{\vp}_m+\psi)-\mathcal{L}(\tilde{\vp}_{m-1}+\psi)]\tilde{\vp}_{m}\|_{L^2(\Omega_T)}\le C(\delta_0)\|e^{-\eta z_0}v_{m-1}\|_{H^1(\Omega_T)}.\label{4.24}
	\end{align}
By a similar argument, we also derive that
	\begin{align}
		&\|[\alpha_i(\tilde{\vp}_m+\psi)-\alpha_i(\tilde{\vp}_{m-1}+\psi)]\partial_{z_2}v_{m-1}\|_{L^2(\Omega_T)}\le C(\delta_0)\|e^{-\eta z_0}v_{m-1}\|_{H^1(\Omega_T)}\label{4.25}
	\end{align}
	and
	\begin{align}
		&\|e^{-\eta z_0}[\mathcal{L}(\tilde{\vp}_{m}+\psi)-\mathcal{L}(\tilde{\vp}_{m-1}+\psi)]\psi\|_{L^2(\Omega_T)}\le C(\delta_0)\|e^{-\eta z_0}v_{m-1}\|_{H^1(\Omega_T)}.\label{4.26}
	\end{align}
	By \eqref{4.24}-\eqref{4.26} and the first order energy estimate \eqref{first order estimate: 2nd step}, one has
	\begin{align}
		\|e^{-\eta z_0}v_{m}\|_{H^1(\Omega_T)}+e^{-2\eta T}&\sum_{k=0}^1\sup_{0\le z_0\le T} \|\partial_{z_0}^{k}v_{m}(z_0,\cdot)\|_{H^{1-k}(\Omega)}\nonumber\\
&\quad\le C(\delta_0)\eta^{-1}\|e^{-\eta z_0}v_{m-1}\|_{H^1(\Omega_T)}.
	\end{align}
Choose $\eta_*\ge \eta_0$ such that $\sigma\defs C(\delta_0)\eta_*^{-1}<1$. Note that $\delta_0$ does not depend on the weight $\eta$. Then select $T_*\le T_0$ such that $e^{2\eta_1 T_*}\le 2$, we obtain \eqref{4.23} for $\eta\ge \eta _*$ and $T\le T_*$. This completes the proof of this proposition.
\end{proof}

Now, we are ready to conclude this paper by showing the main theorem.
\begin{proof}[Proof of Theorem \ref{main theorem}] Proposition \ref{Low norm contraction} implies that $\{\tilde{\vp}_m\}_m^\infty$ is a Cauchy sequence. Hence there exists a function $\tilde{\vp}\in H^1(\Omega_T)$ such that for $\eta\ge \eta_*$ and $T\le T_*$,
\begin{align}
	\lim_{m\rightarrow \infty}\left(\|e^{-\eta z_0}(\tilde{\vp}_m-\tilde{\vp})\|_{H^1(\Omega_T)}+e^{-2\eta T}\sum_{k=0}^1 \|\partial_{z_0}^{k}(\tilde{\vp}_m-\tilde{\vp})\|_{L^\infty(0,T;H^{1-k}(\Omega))}\right)=0.\nonumber
\end{align}
Moreover, Proposition \ref{high norm boundedness} implies that $\tilde{\vp}\in H^4(\Omega_T)$. Hence by passing the limit $m\rightarrow \infty$ in the approximation problem \eqref{4.14}, one deduces that $\tilde{\vp}+\psi$ is a smooth solution to the non-linear problem \eqref{nonlinear initial boundary value problem}. Moreover, by \eqref{High norm boundedness} and the assumptions in Theorem \ref{main theorem}, one has
$\|e^{-\eta z_0}(\tilde{\vp}+\psi)\|_{H^4(\Omega_T)}\le C\delta_0$, for $\eta\ge \eta_*$ and $T\le T_*$. This completes the proof of Theorem \ref{main theorem}.
\end{proof}

\bigskip
\section*{Acknowledgements}

The research of Beixiang Fang was supported in part by the National Key R\&D Program of China (No. 2020YFA0712000) and National Natural Science Foundation of China (No. 11971308 and 11631008).
The research of Wei Xiang was supported in part by the Research Grants Council of the HKSAR, China (Project No. CityU 11303518, CityU 11304820, CityU 11300021, and CityU 11311722). The research of Feng Xiao was supported in part by the National Natural Science Foundation of China (No.12201209).


\begin{thebibliography}{10}
\bibitem{AlinhacCPDE}S. Alinhac, Existence d'ondes de raréfaction pour des syst\`emes quasi-lin\'eaires hyperboliques multidimensionnels.(French) [Existence of rarefaction waves for multidimensional hyperbolic quasilinear systems]\textit{ Comm. Partial Differential Equations} 14:173--230, 1989.

 \bibitem{Alinhac1989}S. Alinhac, Unicit\'e d'ondes de raréfaction pour des syst\`emes quasi-lin\'eaires hyperboliques multidimensionnels. (French) [Uniqueness of rarefaction waves for multidimensional hyperbolic quasilinear systems]\textit{ Indiana Univ. Math. J.}, 38:345--363, 1989. 

\bibitem{BCF} M. Bae, G.-Q. Chen, and M. Feldman, Regularity of solutions to regular shock reflection for potential flow, \textit{Ivent. Math.} 175:505--543, 2009.

\bibitem{BCFMemoris}M. Bae, G.-Q. Chen, and M. Feldman, Prandtl-Meyer reflection configurations, transonic shocks, and free boundary problems, to appear in \textit{Memoirs of the AMS}, {\em Preprint at arXiv:1901.05916}, 2019.

\bibitem{BenzoniSerre2007} S. Benzoni-Gavage and D. Serre, \textit{Multidimensional hyperbolic partial differential equations. First-order systems and applications}, \textit{Oxford Mathematical Monographs}. The Clarendon Press, Oxford University Press, Oxford, 2007.

\bibitem{Bressan}
A. Bressan, \textit{Hyperbolic systems of conservation laws: The one dimensional
Cauchy problem}, Oxford University Press: Oxford, 2000.

\bibitem{Chen2003Multidimensional}
G.-Q. Chen and M. Feldman,
\newblock Multidimensional transonic shocks and free boundary problems for
nonlinear equations of mixed type,
\newblock {\em J. Amer. Math. Soc.}, 16:461--494, 2003.

\bibitem{CFeldman2010} G.-Q. Chen and M. Feldman, Global solutions of shock reflection by large-angle wedges for potential flow, \textit{Ann. of Math.}, 171:1067--1182, 2010. 

\bibitem{CFeldman2018} G.-Q. Chen and M. Feldman,
\textit{Mathematics of Shock Reflection-Diffraction and Von Neumann's Conjecture}, Annals of Mathematics Studies, Princeton, 2018.

\bibitem{CFHX} G.-Q. Chen, M. Feldman, J. Hu, and W. Xiang, Loss of regularity of solutions of shock diffraction problem by a convex cornered wedge to the potential flow equation, {\em SIAM J. Math. Anal.}, 52:1096--1114, 2020.


\bibitem{CFeldmanX} G.-Q. Chen, M. Feldman, and W. Xiang, Convexity of transonic shocks in self-similar coordinates,  \textit{Arch. Ration. Mech. Anal.}, 238:47--124,  2020.




\bibitem{Chen5} S.-X. Chen, Mach configuration in pseudo-stationary compressible flow, \textit{J. Amer. Math. Soc.}, 21:63--100, 2008.


 

\bibitem{CouSecchi2004} J.-F. Coulombel and P. Secchi, The stability of compressible vortex sheets in two space dimensions, \textit{Indiana Univ. Math. J.} 53:941--1012, 2004.
 
\bibitem{CouSecchi} J.-F. Coulombel and P. Secchi, Non-linear compressible vortex sheets in two space dimensions, \textit{Annales scientifiques de l'\'ecole Normale Sup\'erieure, S\'erie 4}, 41:85--139, 2008.

\bibitem{CouSecchi2009}
 J.-F. Coulombel and P. Secchi, Uniqueness of 2-D compressible vortex sheets, \textit{Commun. Pure Appl. Anal.}, 8:1439--1450, 2009. 
\bibitem{Dafermos}
C. M. Dafermos, \textit{Hyperbolic conservation laws in continuum physics}, Springer-Verlag: Berlin, 2010.

\bibitem{GS} F. Gazzola and P. Secchi, Inflow-outflow problems for Euler equations in a rectangular cylinder, \textit{Non-linear Differ. Equ. Appl.}, 8:195--217, 2001.

\bibitem{FHXX}
B.-X. Fang, F.-M. Huang, W. Xiang and F. Xiao,
\newblock Persistence of the steady planar normal shock structure in 3-D unsteady potential flows,
\newblock {\em Preprint at arXiv:2108.09207}, 2021.

\bibitem{FXX}
B.-X. Fang, W. Xiang, and F. Xiao,
\newblock Persistence of the steady normal shock structure for the unsteady
potential flow,
\newblock {\em SIAM J. Math. Anal.}, 52:6033--6104, 2020.

\bibitem{Godin}P. Godin, On the breakdown of 2D compressible Eulerian flows in bounded impermeable regions with corners, \textit{J. Math. Pures Appl.} 137:178--212, 2020.

\bibitem{GodinMAMS}P. Godin, The 2D compressible Euler equations in bounded impermeable domains with corners, \textit{Mem. Amer. Math. Soc.} 269, v+72 pp, 2021.

\bibitem{IKAWA1968}
M. Ikawa,
\newblock Mixed problems for hyperbolic equations of second order,
\newblock {\em J. Math. Soc. Japan}, 20:580--608,
1968.

\bibitem{IKAWA1970}
M. Ikawa,
\newblock On the mixed problem for hyperbolic equations of second order with
the neumann boundary condition,
\newblock {\em Osaka J. Math.}, 7:203--223, 1970.


\bibitem{EL} V. Elling and T.-P. Liu, Supersonic flow onto a solid wedge, \textit{Comm. Pure. Appl. Math.}, 61:1347--1448, 2008.

\bibitem{R.Courant1948}
K.O.~Friedrichs and {R. Courant},
\newblock {\em {Supersonic flow and shock waves}},
\newblock Springer-Verlag, New York, 1948.

\bibitem{LiZheng2009}
J.-Q. Li and Y.-X. Zheng, Interaction of rarefaction waves of the two-dimensional self-similar Euler equations, \textit{Arch. Ration. Mech. Anal.}, 193:623--657, 2009.

\bibitem{LiZheng2010}
J.-Q. Li and Y.-X. Zheng, Interaction of four rarefaction waves in the bi-symmetric class of the two-dimensional Euler equations, \textit{Comm. Math. Phys.}, 296:303--321, 2010.



\bibitem{Li-Witt-Yin2018}
J. Li, I. Witt, and H.-C. Yin, Global multidimensional shock waves of 2-dimensional and 3-dimensional unsteady potential flow equations, \textit{SIAM J. Math. Anal.}, 50:933--1009, 2018. 

\bibitem{Majda1983S} A. Majda, The stability of multidimensional shock fronts, \textit{Mem. Amer. Math. Soc.} 41, 1983.

\bibitem{Majda1983E} A. Majda, The existence of multidimensional shock fronts, \textit{Mem. Amer. Math. Soc.}, 43, 1983.

\bibitem{Majda1987}
A. Majda and E. Thomann,
\newblock {Multi-dimensional shock fronts for second order wave equations}.
\newblock {\em Comm. Partial Differential Equations},
12:777--828, 1987.

\bibitem{Osher} S. Osher, An ill posed problem for a hyperbolic equation near a corner, \textit{Bull. Amer. Math. Soc.}, 79:1043--1044, 1973.

\bibitem{Osher1} S. Osher, An ill-posed problem for a strictly hyperbolic equation in two unknowns near a corner, \textit{Bull. Amer. Math. Soc.}, 80:705--708, 1974.
\bibitem{Smoller} J. Smoller, \textit{Shock Waves and Reaction--Diffusion Equations}, Springer, New York, 1994.
\bibitem{XuYin2020}
G. Xu and H.-C. Yin, On global smooth solutions of 3-D compressible Euler equations with vanishing density in infinitely expanding balls, \textit{Discrete Contin. Dyn. Syst.}, 40:2213--2265, 2020.

\bibitem{Yuan2012}
H.-R. Yuan,
\newblock {Persistence of shocks in ducts},
\newblock {\em Nonlinear Anal.},
75:3874--3894, 2012.
\bibitem{Zheng_Book}
Y.-X. Zheng, \textit{Systems of Conservation Laws: Two-Dimensional Riemann Problems}, Birkh\"auser, Boston, 2001.
\end{thebibliography}
\end{document}